\def\lcf{\lbrack\! \lbrack}
\def\rcf{\rbrack\! \rbrack}
\newcommand{\xleftrightarrow}[2][]{\ext@arrow 3359\leftrightarrowfill@{#1}{#2}}
\newcommand{\xdasharrow}[2][->]{
	\tikz[baseline=-\the\dimexpr\fontdimen22\textfont2\relax]{
		\node[anchor=south,font=\scriptsize, inner ysep=1.5pt,outer xsep=2.2pt](x){#2};
		\draw[shorten <=3.4pt,shorten >=3.4pt,dashed,#1](x.south west)--(x.south east);
	}
}
\newtheorem{definition}{Definition}[section]
\newtheorem{lemma}[definition]{Lemma}
\newtheorem{theorem}[definition]{Theorem}
\newtheorem{proposition}[definition]{Proposition}
\newtheorem{corollary}[definition]{Corollary}
\newtheorem{remark}[definition]{Remark}
\newtheorem{example}[definition]{Example}
\newtheorem{comment}[definition]{Comment}
\newcommand{\longsquiggly}{\xymatrix{{}\ar@{~>}[r]&{}}}
\newcommand{\sT}{\mathsf{T}}
\newcommand{\rs}{\mathtt r}
\newcommand{\dsn}{{{\delta}}^{n}}
\newcommand{\ads}{\underline{\ad}}
\newcommand{\dn}{{{\delta}}_{^tn}}
\newcommand{\adn}{{\ad}^{n}}
\newcommand{\adt}{{\ad}^{(2)}}
	{\nolinebreak \hfill $\Box$ \end{trivlist}}
\newcommand\gl{\mathfrak {gl}}
\renewcommand\sl{\mathfrak {sl}}
\newcommand\so{\mathfrak {so}}
\newcommand\ad{\mathrm{ad}}
\def \g{\mathfrak{g}}
\def \R{\mathbb{R}}
\renewcommand\d{\mathrm{d}}
\thanks{AMS Mathematics Subject Classification (2020): 53D17, 17B62, 37K10.}
\thanks{Keywords: Poisson-Nijenhuis structures, Lie bialgebras and coboundary Lie bialgebras,  Poisson-Lie groups, completely integrable systems.  }
\begin{document}
	
	\title[NL bialgebras]{NL bialgebras}
	
	\author[Z. Ravanpak]{Zohreh Ravanpak}
	\address{Z.\ Ravanpak: 
		Departamentul de Matematic\u a, Universitatea de Vest din Timi\c soara \\} 
	\email{zohreh.ravanpak@e-uvt.ro}

	\begin{abstract} In this paper, we introduce the concept of (weak) NL bialgebras. These structures consist of a Lie bialgebra  $(\g,[\cdot,\cdot],\delta)$ equipped with a Nijenhuis structure on the Lie algebra $(\g,[\cdot,\cdot])$, satisfying specific compatibility conditions.  This construction is analogous to Poisson-Nijenhuis structures studied in the context of integrable systems. We further investigate NL bialgebras that generate a compatible hierarchy of bialgebras, both on the original Lie algebra and its deformed versions, through the Nijenhuis structure of any order. Additionally, we demonstrate that the underlying algebraic structure of a particular case of the Euler-top system is a weak NL bialgebra.
	\end{abstract}
		\maketitle
	\tableofcontents
	\section{Introduction }
	 Poisson-Nijenhuis (PN) structures on manifolds were introduced by Magri and Morosi \cite{MaMo} (see also \cite{KoMa}).  These structures generate a compatible hierarchy of Poisson structures through a recursion operator and are significant in integrable systems theory.  More precisely, they generalize the concept of bi-Hamiltonian systems, providing a rich framework for the exploration of integrability of Hamiltonian dynamics.
	 
	 	 In this paper, we study this approach within the context of Lie bialgebras. Specifically, given a Lie bialgebra $(\g,[\cdot,\cdot],\delta)$ with a Nijenhuis operator on the Lie algebra $(\g,[\cdot,\cdot])$, we analyze the compatibility conditions required to establish a compatible hierarchy of Lie bialgebras, both on the original Lie algebra and on iteratively deformed Lie brackets, utilizing the Nijenhuis structure of every order. We introduce the term (weak) NL bialgebra, to describe this structure.
	 	 
These structures contribute to understanding the interplay between Nijenhuis operators and Lie bialgebras. They reveal how compatibility conditions facilitate the construction of a series of compatible Lie bialgebras. This exploration enhances the understanding of the algebraic framework underlying integrable systems. It contributes to the broader field of mathematical physics by establishing connections between these algebraic structures and their applications in various dynamical systems.
	
Some Lie bialgebras, referred to as coboundary Lie bialgebras, are derived from solutions of the classical Yang-Baxter equation on a Lie algebra. These solutions are significant in the theory of integrable systems, establishing a compelling link between integrable systems and Poisson-Lie groups  \cite{Ti1, Ti2}. Moreover, we explore the interplay between coboundary Lie bialgebras and Nijenhuis structures, revealing their significance as examples of NL bialgebras.
	
There are numerous studies in the literature that examine the interplay between Nijenhuis structures and other mathematical frameworks, such as: \cite{Grab,HaZo, Ko3, RaReHa, Sti} .

The infinitesimal counterpart of PN groupoids are introduced in the work by A.~Das \cite{Das}. They are characterized by Lie bialgebroids with a linear endomorphism, together with the linear Poisson structure on the dual Lie algebroid, which form again a PN structure. However, our approach is not primarily geometric; rather, we aim to explore the hierarchy of deformations from a fundamentally algebraic perspective, starting with the cohomology of Lie algebras. 

Our perspective on NL bialgebras also differs from the concept of Lie-Nijenhuis bialgebroids introduced in \cite{Dru}, where the latter describes Lie bialgebroids with a generalized derivation of degree one providing an infinitesimal explanation of PN groupoids. NL bialgebras, on the other hand, are characterized within a distinct algebraic framework, focusing on $1$-cocycles in the cohomology of Lie algebras defined by deformed Lie brackets. 
	
It is well-known that for a PN structure on a manifold $M$, the tangent bundle $\sT M$ equipped with the Lie algebroid bracket $[\cdot,\cdot]_N$ and the cotangent bundle $\sT^* M$ equipped with the Lie algebroid bracket $[\cdot,\cdot]_\Pi$ form a Lie bialgebroid \cite{Ko4, Mack}. In the theory of PN structures, the Nijenhuis structure is considered on the tangent bundle of the ambient space where the PN structure resides. In contrast, for NL bialgebras, the Nijenhuis structure is defined directly on the Lie algebra itself. In our case, given an (almost) NL bialgebra $(\g,[\cdot,\cdot],\delta,n)$, one can apply the Kirillov-Kostant-Souriau (KKS) Poisson bracket on $\g^*$. Consequently, the structure $((\g,[\cdot,\cdot]_n), (\g^*,[\cdot,\cdot]_{\g^*}))$ forms a Lie algebroid over a point.

 We remark that the theory of deformations of Lie algebras, see  (\cite{Ger,Ni1}), and of Nijenhuis deformations (N-deformations) of Lie algebras in our paper are related, but distinct concepts. In fact, the brackets $[\cdot,\cdot]_n$, which are deformed by a linear operator $n$ on $\mathfrak{g}$ in our work, represent trivial infinitesimal deformations in the context of the general theory of deformations of Lie algebras. Specifically, we have
\[
[\xi_1,\xi_2]_t=[\xi_1,\xi_2]+t[\xi_1,\xi_2]_n+(\mbox{higher order terms}), \,  \quad t\in \R
\,,\]
where $[\xi_1,\xi_2]_t$ defines a Lie algebra structure on $\g$ isomorphic to that defined by $[\xi_1,\xi_2]$. Indeed, $[\cdot,\cdot]_n$ is a coboundary, $[\cdot,\cdot]_n=\partial n$, in the Chevalley-Eilenberg cochain complex with values in $\g$, and thus it corresponds to a zero cohomology class in $ H^2(\mathfrak{g}, \mathfrak{g})$. 
 In Subsection \ref{sec6},  we demonstrate that the semi-simple real Lie algebra $\so(3)$ can be non-trivially $N$-deformed---not in the general sense of deformation of the Lie algebras--- into the Lie algebra $\sl(2,\R)$ using an almost Nijenhuis structure. We will maintain the convention established by Kosmann-Schwarzbach and Magri, as noted in their work \cite{KoMa}, to refer to $[\cdot,\cdot]_n$---with a slight abuse of terminology---as a deformed Lie bracket when $n$ is (almost) Nijenhuis.

		Our motivation arises from the importance of bi-Hamiltonian systems within the context of Poisson-Lie groups. The infinitesimal counterpart of a Poisson-Lie group is a Lie bialgebra. Conversely, the adjoint one-cocycle associated with a Lie bialgebra induces a unique Poisson structure that is compatible with the group multiplication on a connected, simply connected Lie group that integrates the Lie algebra. Consequently, a compatible hierarchy of Lie bialgebras on a Lie algebra leads to a family of compatible multiplicative Poisson structures on the corresponding Lie group. These compatible Poisson structures are essential for constructing bi-Hamiltonian systems, which, under certain conditions, are completely integrable \cite{KoMa}. 
		
	Consequently, (weak) NL bialgebras play a significant role in the theory of integrable systems. In particular, the structures arising from (weak) NL bialgebras can be used to construct integrable models in classical mechanics. The added flexibility provided by the Nijenhuis operator facilitates the development of new integrable systems and solutions. Furthermore, they generalize the concept of bi-Hamiltonian systems to a class where the algebraic structures are (weak) NL bialgebras.

		The structure of this paper is as follows: Section \ref{sec2} reviews definitions and fundamental results on Poisson-Nijenhuis structures, Lie bialgebras, and coboundary Lie bialgebras derived from solutions to the classical Yang-Baxter equation. In Section \ref{sec3}, given a Lie bialgebra $(\g,[\cdot,\cdot],\delta)$ and a Nijenhuis structure on the Lie algebra  $(\g,[\cdot,\cdot])$, we introduce three types of deformation for the corresponding 1-cocycle. Subsequently, we define almost NL bialgebras and (weak) NL bialgebras $(\g,[\cdot,\cdot],\delta,n)$. Section \ref{sec4} delves into coboundary Lie bialgebras obtained from solutions of the classical Yang-Baxter equation, with a focus on the role of the Nijenhuis operator. In Section \ref{sec5}, we discuss the hierarchy of structures and present additional results on NL bialgebras, which lead to a compatible hierarchy of Lie bialgebras. We apply our findings to a well-known dynamical system, a particular case of the Euler-top,  demonstrating that the underlying algebraic structure of $\so(3)$ Euler-top dynamics is a weak NL bialgebra. Finally, a concluding section closes the paper.

	\section{ Poisson-Nijenhuis structures and (coboundary) Lie bialgebras}\label{sec2}
	In this section, we will review fundamental definitions and results about Poisson-Nijenhuis structures on manifolds, Lie bialgebras and solution of the classical Yang-Baxter equation (for more details, see \cite{Ko, KoMa}).
	\subsection{Poisson-Nijenhuis structures on Manifolds}
	A Poisson-Nijenhuis structure $(\Pi,\mathrm{N})$ on a manifold $M$ consists of
	\begin{itemize}
		\item	A Poisson structure on $M$, i.e. a bivector field $\Pi$ on $M$ for which
		$$	[\Pi, \Pi] = 0,$$
		where $[\cdot, \cdot ]$ is the Schouten-Nijenhuis bracket. The Poisson structure $\Pi$ induces a vector bundle map from the cotangent bundle $\mathsf T^*M$ of $M$ on the tangent bundle $\mathsf T M$ of $M$, which we will denote by $\Pi^{\sharp}: \mathsf T^*M \to \mathsf T M$, given by
		\[\langle\alpha, \Pi^{\sharp}(\beta) \rangle = \Pi(\alpha, \beta), \; \; \mbox{ for } \alpha, \beta \in \mathsf T_x^*M  \mbox{ and } x\in M.\] 
		
		\item A Nijenhuis structure on $M$, i.e. a $(1,1)$-tensor field $\mathrm{N}: \mathsf T M \to \mathsf T M $ whose Nijenhuis torsion $\lcf \mathrm{N}, \mathrm{N}\rcf $ is zero. $\lcf \mathrm{N}, \mathrm{N}\rcf $ is a $(1,2)$-tensor field on $M$ which is defined as 
		\[
		\lcf \mathrm{N}, \mathrm{N}\rcf (X, Y) = [\mathrm{N}X, \mathrm{N}Y] - \mathrm{N}[\mathrm{N}X, Y] - \mathrm{N}[X, \mathrm{N}Y] + \mathrm{N}^2[X, Y],\quad \forall X, Y \in {\mathfrak X}(M)\,. \]
	\end{itemize}
	
	\noindent  satisfying two compatibility conditions 
	\[
\mathrm{N} \circ \Pi^{\sharp} = \Pi^{\sharp} \circ{ }^t\mathrm{N}\,,
	\]
	\begin{equation}\label{Def-Concomi}
		\begin{array}{rcl}
			C(\Pi,\mathrm{N})(\alpha,\beta)&=& \mathcal{L}_{\Pi^{\sharp}\alpha}({ }^t\mathrm{N}\beta)-\mathcal{L}_{\Pi^{\sharp}\beta}({ }^t\mathrm{N}\alpha)+{ }^t\mathrm{N}\mathcal{L}_{\Pi^{\sharp}\beta}\alpha-{ }^t\mathrm{N}\mathcal{L}_{\Pi^{\sharp}\alpha}\beta\\[4pt]
			&&+~ \d \left\langle \alpha,\mathrm{N}\Pi^{\sharp}\beta\right\rangle
			+\mathrm{N}^{t}\d\left\langle \alpha,\Pi^{\sharp}\beta\right\rangle,\quad \alpha, \beta \in \Omega^1(M)\,.
		\end{array}
	\end{equation}
	
	\noindent Here, ${ }^t \mathrm{N}$ is the dual $(1,1)$-tensor field to $\mathrm{N}$ and $C(\Pi, \mathrm{N})$ is the so-called the concomitant of $\Pi$ and $\mathrm{N}$. The concomitant $C(\Pi, \mathrm{N})$ is a $(2,1)$-tensor field on $M$.
	
	An important fact is that with a Poisson-Nijenhuis   structure $(\Pi, \mathrm{N})$ on $M$ one may produce a hierarchy of Poisson structures $\Pi_k$, $k \in \mathbb{N} \cup \{0\}$ (with $\Pi_0 = \Pi$) which are compatible, that is,
	\[
	[\Pi_k, \Pi_l] = 0, \; \; \mbox{ for } k, l \in \mathbb{N}.
	\]
	The Poisson structure $\Pi_k$ in this hierarchy is characterized by the condition
	\[
	\Pi_k^{\sharp} = \mathrm{N}^k \circ \Pi^{\sharp}, \; \; \mbox{ for } k \in \mathbb{N}, \; \; k \geq 1.
	\]
	Indeed, the pair $(\Pi, \mathrm{N}^k)$ is a Poisson-Nijenhuis structure on $M$.
	
	\subsection{Lie bialgebras} Before defining a Lie bialgebra, it is essential to review some fundamental concepts in the cohomology of Lie algebras.
    
	Let $(\mathfrak g,[\cdot, \cdot])$ be a finite dimensional Lie algebra over the field of real number and consider a representation  of $\mathfrak g$  on a vector space $V,$ i.e. an $\R$-bilinear map 	$\cdot: \mathfrak g\times V\to V$  such that
	$$ [\xi_1,\xi_2]\cdot v=(\xi_1\cdot(\xi_2\cdot v))-(\xi_2\cdot(\xi_1\cdot v)),\mbox{ for all }\xi_i\in {\mathfrak g} \mbox{ and } v\in V.$$
	
	\noindent	The cohomology associated with this representation on the Lie algebra $\mathfrak g$  is defined as follows:
	
	If $k$ is an arbitrary nonnegative integer, the space $C^k(\mathfrak g,V)$ of {\it $k$-cochains} of $\mathfrak g$ with values on $V$ is the set of  skew-symmetric $k$-linear maps $c_k: \mathfrak g\times \dots  \times \mathfrak g\to V.$ The {\it $0$-cochains} are just the elements of $V$. The coboundary operator $\partial:C^k(\mathfrak g,V) \to C^{k+1}(\mathfrak g,V)$ of this cohomology  is the linear map given by 
	\begin{equation}\label{partial}
		\begin{array}{rcl}
			{(\partial c_k)}(\xi_0,...\xi_k)&=&\displaystyle\sum _{i=0}^{k} (-1)^{i}\xi_i \cdot c_k (\xi_0,...,\hat{\xi_i}
			,...,\xi_k)\\[8pt] &&+\displaystyle\sum_{\substack{ i,j=0 \\ i<j}}^k(-1)^{i+j}c_k([\xi_i,\xi_j],\xi_0,...,\hat {\xi_i},...,\hat{\xi_j},...,\xi_k),
		\end{array}
	\end{equation}
	where $c_k$ is a $k$-cochain  and $\xi_0,...,\xi_k \in \mathfrak g$. It is straightforward to prove that  $\partial^2=0$ and therefore $(C^\bullet(\mathfrak g,V), \partial)$ forms a cohomology complex.
	
	\noindent Two specific cases of this construction that we address in this paper are:
	\begin{itemize}
		\item  The algebraic cohomology of a Lie algebra $(\mathfrak g,[\cdot, \cdot])$, which is defined as the cohomology associated with the trivial representation of $\mathfrak g$ on $\R$. In this case the $1$-cochains correspond to the elements of $\mathfrak g^*$. If $\d$ is the differential for this cohomology, then,  for all $\alpha\in \mathfrak g^*$ and $\xi_i\in \mathfrak g,$  $\d\alpha(\xi_1,\xi_2)=-\alpha([\xi_1,\xi_2]).$ 
		
		\item Let  $V:=\wedge^p\mathfrak g$ be the vector space of the skew-symmetric $n$-linear maps  on $\mathfrak g^*$. We consider  the extension $\ad^{(p)}:\g \times \wedge^p \g \to \wedge^p \mathfrak g$  of the adjoint representation  $\ad$ of  $\mathfrak g$ to the space $\wedge^p{\mathfrak g}$.  For all $\xi\in \mathfrak g$ and $\eta_i\in \mathfrak g^*$, it is defined by  
		\begin{equation}\label{ad2}
			{\ad^{(p)}_\xi(P)(\eta_1,\dots, \eta_k)=\lcf \xi, P\rcf(\eta_1,\dots ,\eta_p)}= \sum_{i=1}^pP(\eta_1,\dots, \ad^*_{\xi}\eta_i,\dots \eta_p)\,, \quad P\in \wedge^p \mathfrak g\,.
		\end{equation}
 Here $\lcf \cdot,\cdot\rcf$ is the algebraic Schouten-Nijenhuis bracket for the Lie algebra $(\mathfrak g,[\cdot,\cdot]).$ 		
	\end{itemize}				
\begin{definition}	
A $k$-cochain $\delta$ is called a $k$-cocycle if $\partial \delta=0$. 
\end{definition}

	\noindent For instance, a $1$-cochain $\delta:\mathfrak g\to \wedge^p\mathfrak g$ in the cohomology induced by the representation $\ad^{(p)}$ is a $1$-cocycle if
	\begin{equation}\label{cocycle}
		\partial \delta(\xi_1,\xi_2)=	\ad_{\xi_1}^{(p)}(\delta \xi_2)-\ad_{\xi_2}^{(p)}(\delta \xi_1)-\delta[\xi_1,\xi_2]=0\,,\quad \forall \xi_i \in {\mathfrak g}\,.
	\end{equation}

	\begin{definition}
		A {\it Lie bialgebra}  is a Lie algebra $(\mathfrak g,[\cdot,\cdot])$, with an additional linear map $\delta:\mathfrak g\to \wedge^2\mathfrak g$ such that:
		\begin{enumerate}
			\item[$(i)$] $\delta$  is a $1$-cocycle for  the cohomology defined by  the representation $\ad^{(2)}:\mathfrak g \times \wedge^2 \mathfrak g \to \wedge^2 \mathfrak g$, 
			\item[$(ii)$] the dual map $[\cdot,\cdot]_{\g^*}:=\delta^{t}: \mathfrak{g}^*\times{\mathfrak g}^* \longrightarrow \mathfrak g^{*}$ of $\delta$  is a Lie bracket on $\mathfrak g^{*}.$
		\end{enumerate}
		We will denote a Lie bialgebra by the triple $(\g,[\cdot,\cdot],\delta)$; however, we sometimes refer to it using the pair of Lie algebras $(\g,\g^*)$. 
	\end{definition}	
	\noindent  Note that $\delta$ is, up to a sign, 
	the algebraic differential associated with the Lie algebra $({\mathfrak g}^*, [\cdot,\cdot]_{\g^*}).$ 
	
	\noindent 	If $(\mathfrak g,[\cdot,\cdot])$ is a Lie algebra and we have any Lie bracket $[\cdot,\cdot]_{\g^*}$ on ${\mathfrak g}^*$ then, using (\ref{cocycle}), one can see $(\g,[\cdot,\cdot],\delta)$ , where $\delta=([\cdot,\cdot]_{\g^*})^t$, is a Lie bialgebra if and only if 
	\begin{equation}\label{bi-alg}
		\begin{array}{rcl}
			[\eta_1,\eta_2]_{\g^*}([\xi_1,\xi_2])&=&[\ad_{\xi_1}^*\eta_1,\eta_2]_{\g^*}(\xi_2) + [\eta_1, \ad_{\xi_1}^*\eta_2]_{\g^*}(\xi_2) \\[8pt]&&-[\ad_{\xi_2}^*\eta_1,\eta_2]_{\g^*}(\xi_1)-[\eta_1, \ad_{\xi_2}^*\eta_2]_{\g^*}(\xi_1),
		\end{array}
	\end{equation}
	for all $(\xi_1,\xi_2,\eta_1,\eta_2)\in {\mathfrak g}\times {\mathfrak g}\times {\mathfrak g}^*\times {\mathfrak g}^*,$ where $\ad^*: {\mathfrak g}\times {\mathfrak g}^*\to {\mathfrak g}^*$ is the coadjoint action. Indeed, if $\delta=([\cdot,\cdot]_{\g^*})^t$, we deduce that 
	\begin{equation}\label{deltaxi}\delta([\xi_1,\xi_2])(\eta_1,\eta_2)=[\eta_1,\eta_2]_{\g^*}([\xi_1,\xi_2]).\end{equation}	
	\noindent	Moreover, using (\ref{ad2}), we have that 
	\begin{equation}\label{adxi}\ad_{\xi_i}^{(2)}(\delta \xi_j)(\eta_1,\eta_2)=\lcf\xi_i,\delta\xi_j\rcf (\eta_1,\eta_2)= [\ad_{\xi_i}^*\eta_1,\eta_2]_{\g^*}(\xi_j) + [\eta_1, \ad_{\xi_i}^*\eta_1]_{\g^*}(\xi_j). \end{equation}
	\noindent Finally, from (\ref{cocycle}), (\ref{deltaxi}) and (\ref{adxi}) we conclude (\ref{bi-alg}). 
	
	\subsection{Coboundary Lie bialgebras}
	A coboundary Lie bialgebra is a specific type of Lie bialgebra characterized by the existence of a coboundary structure. Let $(\mathfrak g,[\cdot,\cdot])$ be a finite-dimensional Lie algebra over the field of real numbers.
    
\begin{definition}
	A $k$-cochain $\delta$, $k\geq 1$, of $\mathfrak g$ with values on $V$, is called a $k$-coboundary if there exists a $(k-1)$-cochain $r$, such that $\delta=\partial r$. It is obvious that any $k$-coboundary is a $k$-cocycle. 
\end{definition}
\begin{definition}
A Lie bialgebra $({\mathfrak g}, [\cdot,\cdot], \delta)$ is called coboundary if $1$-cocycle $\delta$ is the coboundary of an element $r\in \wedge^2 {\mathfrak g}$ , i.e. $\delta=\partial r$, in the cohomology induced by the representation $\ad^{(2)}$. We indicate a coboundary Lie bialgebra by $({\mathfrak g}, [\cdot,\cdot], \delta_r)$. 
\end{definition}

	Let $r\in \wedge^2 {\mathfrak g}$ a $0$-cochain on the cohomology defined by the representation $\ad^{(2)}$ of $(\mathfrak g,[\cdot,\cdot])$. 
The bilinear map $r:{\mathfrak g}^*\times {\mathfrak g}^*\to \R$ induces a linear map $r^{\sharp}: {\mathfrak g}^*\to {\mathfrak g}$ given by 
$$r^{\sharp}(\eta_1)(\eta_2)=r(\eta_1,\eta_2),$$
for $\eta_i\in {\mathfrak g}^*.$ For simplicity we denote the linear map $r^{\sharp}$ by $\rs$. 
	
\noindent		In what follows, we will see the sufficient condition for $\delta_r$ to define a Lie bracket on $\g^*$. Let $r\in \wedge^2\mathfrak g$ be an $r$-matrix for the Lie algebra $(\mathfrak g,[\cdot,\cdot])$, i.e. a solution of the classical Yang-Baxter equation
	$$\lcf r,r\rcf=0\,,$$
for more detail see \cite{Ko}. Now, we consider the $1$-cocycle $\delta_r=\lcf\cdot, r\rcf:{\mathfrak g} \to \wedge^2 {\mathfrak g}$ associated with $r$-matrix $r$ on the cohomology defined by the representation $\ad^{(2)}$ of $(\mathfrak g,[\cdot,\cdot])$ by 
	\begin{equation}\label{deltar}
		\delta_r(\xi)(\eta_1,\eta_2)=\eta_1[\xi,\rs \eta_2]-\eta_2[\xi,\rs \eta_1]\,.
	\end{equation}
	\noindent In fact, using the graded Jacobi identity of $\lcf\cdot,\cdot \rcf $, we have 
	$$\delta_r([\xi_1,\xi_2])=\lcf[\xi_1,\xi_2],r\rcf=\lcf\xi_1,\lcf\xi_2,r\rcf\rcf-\lcf\xi_2,\lcf\xi_1,r\rcf\rcf=\ad^{(2)}_{\xi_1}\delta_r(\xi_2)-\ad^{(2)}_{\xi_2}\delta_r(\xi_1),$$
	for all $\xi_i\in \mathfrak g.$ Then, the corresponding bracket { $\delta_r^t=[\cdot,\cdot]_r:{\mathfrak g}^*\times{\mathfrak g}^*\to {\mathfrak g}^*$} deduced from $\delta_r$  on ${\mathfrak g}^*$ is given by
	\begin{equation}\label{rbracket}
		[\eta_1,\eta_2]_r=\ad^*_{\rs\eta_1}\eta_2-\ad^*_{\rs\eta_2}\eta_1\,,
	\end{equation}
	equivalently,
	\begin{equation}\label{corchete}
		[\eta_1,\eta_2]_r(\xi)=\lcf \xi,r\rcf(\eta_1,\eta_2)=\eta_2([\xi,\rs\eta_1])-\eta_1([\xi,\rs\eta_2])\,,
	\end{equation}
	\noindent for all $\eta_i\in \mathfrak g^*$ and $\xi\in \mathfrak g\,.$ 
	The skew-symmetric bracket $[\cdot,\cdot]_r=\delta_r^t: \mathfrak g^*\times \mathfrak g^*\to \mathfrak g^*$ on $\mathfrak g^*$  {given by (\ref{corchete})} is a Lie bracket. Indeed, from (\ref{corchete}) and Proposition \ref{r}, we deduce that 
	$$
	\begin{array}{rcl}
		[[\eta_i,\eta_j]_r,\eta_k]_r(\xi)&=&[\eta_i,\eta_j]_r([\xi,\rs\eta_k])-\eta_k([\xi, \rs[\eta_i,\eta_j]_r])\\[5pt]
		&=&{ \eta_i( [ [\xi,\rs\,\eta_k],\rs\eta_j] )} -\eta_j([[\xi,\rs\eta_k],\rs\eta_i]){  -\eta_k([\xi,[\rs\eta_i,\rs\eta_j]])}\,,
	\end{array}
	$$
	for all $i,j,k\in \{1,2,3\}$ and $\xi\in {\mathfrak g}$. 	Now, using the Jacobi identity of the Lie bracket $[\cdot,\cdot]$, we deduce that $[\cdot,\cdot]_r$ is a Lie bracket.

	The following result characterizes the solutions of  Yang-Baxter equations in terms of the linear map $\rs$, for more detail see (\cite{Ko}). 
	
	\begin{proposition}\label{r}
		Let $({\mathfrak g},[\cdot,\cdot])$ be a Lie algebra and $r$ an element of $\wedge^2 {\mathfrak g} $. { Then, the following are equivalent:}
		\begin{enumerate}
			\item $r$ is a solution of the classical Yang-Baxter equations.
			\item
			$\rs([\eta_1,\eta_2]_r)=-[\rs\eta_1, \rs\eta_2],$
			for all $\eta_i\in {\mathfrak g}^*,$ where $[\cdot,\cdot]_r$ is the bracket defined by (\ref{corchete}). 
		\end{enumerate}
	\end{proposition}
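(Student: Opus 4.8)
The plan is to establish both implications at once by routing each of them through a single trilinear expression on $\g^*$, namely the cyclic Yang--Baxter form
\[
\mathrm{CYB}(r)(\eta_1,\eta_2,\eta_3):=\langle\eta_1,[\rs\eta_2,\rs\eta_3]\rangle+\langle\eta_2,[\rs\eta_3,\rs\eta_1]\rangle+\langle\eta_3,[\rs\eta_1,\rs\eta_2]\rangle,\qquad\eta_i\in\g^*,
\]
which one checks is totally skew-symmetric in its three arguments, hence represents an element of $\wedge^3\g$.

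First I would record the explicit form of the algebraic Schouten--Nijenhuis square of $r$. Either by a short computation in a basis, or by expanding $r\in\wedge^2\g$ as a finite sum of decomposable bivectors and using the graded Leibniz rule for $\lcf\cdot,\cdot\rcf$ together with the identity $\lcf\xi,r\rcf=\delta_r(\xi)$ and formula (\ref{deltar}) (alternatively, one may simply quote this from \cite{Ko}), one obtains
\[
\lcf r,r\rcf(\eta_1,\eta_2,\eta_3)=2\,\mathrm{CYB}(r)(\eta_1,\eta_2,\eta_3),\qquad \eta_i\in\g^*.
\]
In particular $\lcf r,r\rcf=0$ if and only if $\mathrm{CYB}(r)(\eta_1,\eta_2,\eta_3)=0$ for all $\eta_1,\eta_2,\eta_3\in\g^*$.

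Next I would evaluate the obstruction to $\rs$ being an anti-morphism by pairing it against an arbitrary covector. Since $r\in\wedge^2\g$ is skew, the map $\rs$ obeys $\langle\eta_3,\rs\alpha\rangle=-\langle\alpha,\rs\eta_3\rangle$ for all $\alpha,\eta_3\in\g^*$; therefore $\langle\eta_3,\rs([\eta_1,\eta_2]_r)\rangle=-[\eta_1,\eta_2]_r(\rs\eta_3)$. Substituting the explicit expression (\ref{corchete}) (equivalently (\ref{rbracket})) for $[\cdot,\cdot]_r$ with $\xi=\rs\eta_3$, adding $\langle\eta_3,[\rs\eta_1,\rs\eta_2]\rangle$, and rearranging with the sole help of the skew-symmetry of the bracket of $\g$, one arrives at
\[
\big\langle\,\eta_3,\ \rs([\eta_1,\eta_2]_r)+[\rs\eta_1,\rs\eta_2]\,\big\rangle=\mathrm{CYB}(r)(\eta_1,\eta_2,\eta_3),\qquad \eta_1,\eta_2,\eta_3\in\g^*.
\]

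Finally I would put the pieces together. Because $\g$ is finite-dimensional, $\g^*$ separates its points, so for fixed $\eta_1,\eta_2$ the vector $\rs([\eta_1,\eta_2]_r)+[\rs\eta_1,\rs\eta_2]$ is zero exactly when all its pairings with covectors $\eta_3$ vanish; by the last display this happens exactly when $\mathrm{CYB}(r)\equiv 0$, which by the first step is equivalent to $\lcf r,r\rcf=0$, that is, to $r$ being a solution of the classical Yang--Baxter equation. This yields $(1)\Leftrightarrow(2)$. The one genuinely delicate point is the first step: the explicit evaluation of $\lcf r,r\rcf$ on covectors is routine but sign-sensitive, since the normalisations of $\lcf\cdot,\cdot\rcf$, of the identification of $\wedge^2\g$ with skew-symmetric bilinear forms on $\g^*$, and the numerical factor $2$ must all be tracked consistently; the computations of the second step, by contrast, are elementary.
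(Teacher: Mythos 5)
Your route is the same as the paper's: both arguments rest on exactly two ingredients, namely (a) the identification of $\lcf r,r\rcf=0$ with the vanishing of the cyclic form $\eta_1([\rs\eta_2,\rs\eta_3])+\eta_2([\rs\eta_3,\rs\eta_1])+\eta_3([\rs\eta_1,\rs\eta_2])$, and (b) the evaluation of $\eta_3(\rs[\eta_1,\eta_2]_r)$ using the skew-symmetry of $r$ together with (\ref{corchete}), followed by nondegeneracy of the pairing; the paper simply omits the normalising factor and does not assemble your single display.

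The point you yourself flag as delicate is, however, where your write-up does not close: with the conventions you state ($\langle\eta_2,\rs\eta_1\rangle=r(\eta_1,\eta_2)$, hence $\langle\eta_3,\rs\alpha\rangle=-\langle\alpha,\rs\eta_3\rangle$, and (\ref{corchete})), the computation gives
\[
\langle\eta_3,\rs([\eta_1,\eta_2]_r)\rangle=\eta_2([\rs\eta_1,\rs\eta_3])-\eta_1([\rs\eta_2,\rs\eta_3]),
\]
and therefore
\[
\bigl\langle\eta_3,\ \rs([\eta_1,\eta_2]_r)-[\rs\eta_1,\rs\eta_2]\bigr\rangle=-\,\mathrm{CYB}(r)(\eta_1,\eta_2,\eta_3),
\]
not the identity with the plus sign inside the pairing that you assert. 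Your claimed display would yield the printed conclusion $\rs([\eta_1,\eta_2]_r)=-[\rs\eta_1,\rs\eta_2]$, whereas the computation forced by these conventions yields $\rs([\eta_1,\eta_2]_r)=+[\rs\eta_1,\rs\eta_2]$ whenever the cyclic form vanishes. The logical skeleton is untouched (vanishing of the cyclic form is equivalent to $\rs$ intertwining the two brackets up to a fixed sign, which is the content of the equivalence), and, to be fair, the paper shares the tension: its own two displayed relations, combined, also produce the plus sign, in conflict with the minus appearing in item (2), so either that sign or the convention defining $\rs$ (or the sign in (\ref{corchete})) has to be adjusted. In your proof, the slip sits in the single application of $\langle\eta_3,\rs\alpha\rangle=-\langle\alpha,\rs\eta_3\rangle$; as written, the central identity does not follow from the ingredients you set up, so recheck that step and state the equivalence with the sign your conventions actually produce.
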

	
	\begin{proof}

		The condition $\lcf r,r\rcf=0$ is equivalent with the relation 
		$$ \eta_1([\rs\eta_2, \rs\eta_3])-\eta_2([\rs\eta_1, \rs\eta_3]) + \eta_3([\rs\eta_1, \rs\eta_2])=0,$$
		for all $\eta_i\in {\mathfrak g}^*.$ On the other hand, using the anti-symmetric property of $r$ and (\ref{corchete}), we have 
		$$\eta_3(\rs[\eta_1,\eta_2]_r)=-[\eta_1,\eta_2]_r(\rs\eta_3)= \eta_2([\rs\eta_1, \rs\eta_3]) - \eta_1([\rs\eta_2, \rs\eta_3]).$$
		From the two previous relations, we deduce the proposition.	
	\end{proof}

	\begin{corollary}
		Let $({\mathfrak g}, [\cdot,\cdot])$ be a Lie algebra and $r\in \wedge^2{\mathfrak g}$ a solution of the classical Yang-Baxter equation. Then 
		$({\mathfrak g}, [\cdot,\cdot], \delta_r)$ is a coboundary Lie bialgebra, where $\delta_r=([\cdot,\cdot]_r)^t$ and $[\cdot,\cdot]_r$ is the bracket given by (\ref{corchete}).
	\end{corollary}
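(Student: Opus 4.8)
The plan is to assemble facts already established in the excerpt rather than to prove anything genuinely new; indeed this is exactly why the statement is phrased as a corollary. First I would observe that, by definition, $\delta_r=\lcf\cdot,r\rcf$ with $r\in\wedge^2\g$, and that this is precisely the image $\partial r$ of the $0$-cochain $r$ under the coboundary operator of the Chevalley--Eilenberg complex of $\g$ with values in $\wedge^2\g$ for the representation $\ad^{(2)}$. Hence $\delta_r$ is a $1$-coboundary, and in particular a $1$-cocycle, so condition $(i)$ in the definition of a Lie bialgebra holds automatically. (Equivalently, one may invoke the explicit computation displayed above, $\delta_r([\xi_1,\xi_2])=\ad^{(2)}_{\xi_1}\delta_r(\xi_2)-\ad^{(2)}_{\xi_2}\delta_r(\xi_1)$, which follows from the graded Jacobi identity of $\lcf\cdot,\cdot\rcf$ and verifies $(i)$ directly.)

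Next I would check condition $(ii)$: that the dual map $[\cdot,\cdot]_r:=\delta_r^{t}$ is a Lie bracket on $\g^*$. Skew-symmetry is immediate from $r\in\wedge^2\g$ through formula (\ref{corchete}). For the Jacobi identity I would feed Proposition \ref{r} into the expansion of the associator carried out above: since $r$ solves the classical Yang--Baxter equation, Proposition \ref{r} gives $\rs([\eta_1,\eta_2]_r)=-[\rs\eta_1,\rs\eta_2]$ for all $\eta_i\in\g^*$. Substituting this identity into the formula for $[[\eta_i,\eta_j]_r,\eta_k]_r(\xi)$ obtained from (\ref{corchete}) reduces the cyclic sum $\sum_{\mathrm{cyc}}[[\eta_i,\eta_j]_r,\eta_k]_r(\xi)$ to a cyclic sum of terms of the shape $\eta_i([[\xi,\rs\eta_k],\rs\eta_j])$, which vanishes by the Jacobi identity of $[\cdot,\cdot]$ in $\g$. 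Hence $[\cdot,\cdot]_r$ is a genuine Lie bracket on $\g^*$.

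Finally I would conclude: $(\g,[\cdot,\cdot],\delta_r)$ satisfies both $(i)$ and $(ii)$, so it is a Lie bialgebra; and since $\delta_r=\partial r$ with $r\in\wedge^2\g$ in the cohomology induced by $\ad^{(2)}$, it is by definition a coboundary Lie bialgebra, namely the one denoted $(\g,[\cdot,\cdot],\delta_r)$.

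I do not expect a real obstacle here, since all the substantive work has already been done. The one point worth flagging is that the hypothesis $\lcf r,r\rcf=0$ enters exactly once, at the Jacobi identity for $[\cdot,\cdot]_r$, through Proposition \ref{r}: without the classical Yang--Baxter equation, $\delta_r$ would still be a $1$-cocycle (even a coboundary), but its transpose would in general fail to satisfy the Jacobi identity, so $(ii)$ -- and hence the claim -- would collapse. Thus the ``hard part'' is localized entirely in Proposition \ref{r}, which is already established, and the corollary is a matter of bookkeeping.
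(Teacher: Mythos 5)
Your proposal is correct and follows essentially the same route as the paper: the cocycle (indeed coboundary) property of $\delta_r=\lcf\cdot,r\rcf=\partial r$ via the graded Jacobi identity, and the Jacobi identity for $[\cdot,\cdot]_r$ via Proposition \ref{r} together with the Jacobi identity of $[\cdot,\cdot]$, which is exactly the bookkeeping the paper carries out in the text immediately preceding the corollary. Your remark that the Yang--Baxter hypothesis enters only through Proposition \ref{r} in establishing condition $(ii)$ is an accurate reading of where the substance lies.
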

	\section{Deformation of Lie algebras and Lie bialgebras}\label{sec3}
	In this section, we explore three types of deformation of the corresponding $1$-cocycle associated with a Lie bialgebra $({\mathfrak g}, [\cdot,\cdot], \delta)$, utilizing an (almost) Nijenhuis structure on the Lie algebra $({\mathfrak g}, [\cdot,\cdot])$. Under certain conditions, these deformations yield new $1$-cocycles.

	\subsection{One-cocycle on a deformed Lie algebra}
	
	Consider a Lie algebra $(\mathfrak g,[\cdot,\cdot])$ and suppose we have a linear operator  $n:\mathfrak g \to \mathfrak g$ on it. The linear map $n$ defines a skew symmetric deformed bracket on $\g$ by
	\begin{equation}\label{defn}
		[\xi_1,\xi_2]_n=[n\xi_1,\xi_2]+[\xi_1,n\xi_2]-n[\xi_1,\xi_2] \quad \xi_i\in  \mathfrak{g}\,,
	\end{equation}
	and the Nijenhuis torsion of $n$ with respect to the Lie bracket $[\cdot,\cdot]$ is given by
	\begin{equation}\label{bracket--n}
		\lcf n,n\rcf(\xi_1,\xi_2)=n[\xi_1,\xi_2]_n-[n\xi_1,n\xi_2]\quad \xi_i\in \mathfrak g\,.
	\end{equation}
	The linear map  $n$ is {\it almost Nijenhuis} if $\lcf n,n\rcf$ is a $2$-cocyle, in the cohomology complex induced by the adjoint representation  of $(\mathfrak g,[\cdot,\cdot]$), and it is {\it Nijenhuis} if $\lcf n,n\rcf=0$.

	\begin{proposition}\label{Lie}
		Let $(\mathfrak g,[\cdot,\cdot])$ be a Lie algebra and  $n:\mathfrak g \to \mathfrak g$ a linear operator on $\mathfrak g.$ Then,  $n$ is almost Nijenhuis 
		if and only if $[\cdot,\cdot]_n$ is a Lie bracket. In such a case  $[\cdot,\cdot]$ and $[\cdot,\cdot]_n$ are compatible Lie brackets, i.e. $[\cdot,\cdot] + [\cdot,\cdot]_n$ is a new Lie bracket.
	\end{proposition}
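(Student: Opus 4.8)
The plan is the following. The deformed bracket $[\cdot,\cdot]_n$ of (\ref{defn}) is visibly bilinear and skew-symmetric, so it is a Lie bracket exactly when it satisfies the Jacobi identity; hence everything is governed by its Jacobiator
\[
\mathrm{Jac}_n(\xi_1,\xi_2,\xi_3):=[[\xi_1,\xi_2]_n,\xi_3]_n+[[\xi_2,\xi_3]_n,\xi_1]_n+[[\xi_3,\xi_1]_n,\xi_2]_n .
\]
I would first establish the identity
\[
\mathrm{Jac}_n(\xi_1,\xi_2,\xi_3)=-\big(\partial\lcf n,n\rcf\big)(\xi_1,\xi_2,\xi_3),
\]
where $\partial$ is the Chevalley--Eilenberg coboundary (\ref{partial}) for the adjoint representation of $(\g,[\cdot,\cdot])$, evaluated on the $2$-cochain $\lcf n,n\rcf\colon\g\times\g\to\g$. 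Granting this, the first assertion is immediate, since $[\cdot,\cdot]_n$ is a Lie bracket $\iff\mathrm{Jac}_n=0\iff\partial\lcf n,n\rcf=0\iff\lcf n,n\rcf$ is a $2$-cocycle $\iff n$ is almost Nijenhuis.

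To prove the displayed identity I would expand each summand $[[\xi_1,\xi_2]_n,\xi_3]_n$ by applying (\ref{defn}) twice, and rewrite the innermost occurrence $n[\xi_1,\xi_2]_n$ by means of the torsion relation (\ref{bracket--n}), i.e. $n[\xi_1,\xi_2]_n=[n\xi_1,n\xi_2]+\lcf n,n\rcf(\xi_1,\xi_2)$. After taking the cyclic sum this produces the term $\sum_{\mathrm{cyc}}[\lcf n,n\rcf(\xi_1,\xi_2),\xi_3]$ together with several cyclic sums of iterated \emph{undeformed} brackets carrying the operator $n$ on one or two of their arguments (or applied globally). On the other side one expands $\sum_{\mathrm{cyc}}\lcf n,n\rcf([\xi_1,\xi_2],\xi_3)$ in the same way, using $\sum_{\mathrm{cyc}}[[\xi_1,\xi_2],\xi_3]=0$ to discard the term involving $n^2$. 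Comparing the two, the identity reduces to two elementary cancellations, each a consequence of the Jacobi identity of $[\cdot,\cdot]$:
\[
\sum_{\mathrm{cyc}}\Big([[n\xi_1,n\xi_2],\xi_3]+[[n\xi_1,\xi_2],n\xi_3]+[[\xi_1,n\xi_2],n\xi_3]\Big)=0,
\]
\[
\sum_{\mathrm{cyc}}\Big([[n\xi_1,\xi_2],\xi_3]+[[\xi_1,n\xi_2],\xi_3]+[[\xi_1,\xi_2],n\xi_3]\Big)=0,
\]
both obtained by applying the Jacobi identity to the individual nested brackets and relabelling the cyclic sum. The sign in the displayed identity depends on the conventions fixed in (\ref{defn}), (\ref{bracket--n}) and (\ref{partial}), but it is irrelevant for the equivalence.

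For the compatibility statement I would use the two elementary identities $[\xi_1,\xi_2]+[\xi_1,\xi_2]_n=[\xi_1,\xi_2]_{n+\mathrm{id}}$ and $\lcf n+\mathrm{id},n+\mathrm{id}\rcf=\lcf n,n\rcf$, both immediate from (\ref{defn}) and (\ref{bracket--n}); in the second, all the extra contributions involving the identity operator cancel in pairs. Hence, if $n$ is almost Nijenhuis then so is $n+\mathrm{id}$, and the equivalence already proved, applied to the operator $n+\mathrm{id}$, shows that $[\cdot,\cdot]_{n+\mathrm{id}}=[\cdot,\cdot]+[\cdot,\cdot]_n$ is a Lie bracket — which is precisely the asserted compatibility of $[\cdot,\cdot]$ and $[\cdot,\cdot]_n$.

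The main obstacle is purely computational: keeping track of the various cyclic sums of nested brackets produced when expanding $\mathrm{Jac}_n$, and verifying that the pieces carrying the operator $n$ collapse — via the Jacobi identity of $[\cdot,\cdot]$ — exactly onto $-\partial\lcf n,n\rcf$, with no residual terms and no sign slips. Conceptually the proposition rests only on the torsion relation (\ref{bracket--n}) and on the Jacobi identity of the undeformed bracket.
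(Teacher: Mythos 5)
Your proposal is correct. For the main equivalence you follow essentially the same route as the paper: both arguments expand the Jacobiator of $[\cdot,\cdot]_n$ using (\ref{defn}) and the torsion relation (\ref{bracket--n}), and identify it, via the Jacobi identity of $[\cdot,\cdot]$ (applied exactly to the cyclic sums you list, together with the vanishing of the $n^2$ term), with $\pm\,\partial\lcf n,n\rcf$; the paper records this identity with the opposite sign to yours, and as you note the sign is convention-dependent and immaterial for the equivalence ``$[\cdot,\cdot]_n$ Lie $\iff$ $\lcf n,n\rcf$ a $2$-cocycle''. Where you genuinely diverge is the compatibility statement: the paper verifies the Jacobi identity of $[\cdot,\cdot]+[\cdot,\cdot]_n$ directly, invoking the Jacobi identities of $[\cdot,\cdot]$ and $[\cdot,\cdot]_n$ on the triples $(\xi_1,\xi_2,\xi_3)$, $(n\xi_1,\xi_2,\xi_3)$, $(\xi_1,n\xi_2,\xi_3)$ and $(\xi_1,\xi_2,n\xi_3)$, whereas you observe that $[\cdot,\cdot]+[\cdot,\cdot]_n=[\cdot,\cdot]_{n+\mathrm{id}}$ while $\lcf n+\mathrm{id},n+\mathrm{id}\rcf=\lcf n,n\rcf$, so the equivalence already proved, applied to $n+\mathrm{id}$, yields the claim at once. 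Your trick is cleaner, avoids a second brute-force Jacobi check, and in fact gives slightly more: replacing $n+\mathrm{id}$ by $tn+\mathrm{id}$ (whose torsion is $t^{2}\lcf n,n\rcf$) shows that the whole pencil $[\cdot,\cdot]+t[\cdot,\cdot]_n$ consists of Lie brackets. The paper's direct check, by contrast, makes explicit which Jacobi identities of the two brackets control the mixed terms of the sum.
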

	\begin{proof}
		
		Using (\ref{defn}), (\ref{bracket--n}) and the Jacobi identity, after a straightforward computation, we obtain 
		$$\begin{array}{rcl}
			\partial\lcf n,n\rcf(\xi_1,\xi_2, \xi_3)&=& [\xi_1, \lcf n,n\rcf(\xi_2,\xi_3)]- [\xi_2, \lcf n,n\rcf(\xi_1,\xi_3)]+ [\xi_3, \lcf n,n\rcf(\xi_1,\xi_2)]\\[8pt]&& - \lcf n,n\rcf([\xi_1,\xi_2], \xi_3) + \lcf n,n\rcf([\xi_1,\xi_3], \xi_2)-\lcf n,n\rcf([\xi_2,\xi_3], \xi_1)\\[8pt]&=& { [[\xi_1,\xi_2]_n\,,\xi_3]_n + [[\xi_3,\xi_1]_n\,,\xi_2]_n+ [[\xi_2,\xi_3]_n\,,\xi_1]_n,}
		\end{array}$$
		for all $\xi_i\in {\mathfrak g},$ where  $\partial$ denotes the  differential induced by the adjoint representation of the Lie bracket $[\cdot,\cdot]$. If $n$ is an almost Nijenhuis operator then using that $[\cdot,\cdot]_n$ and $[\cdot,\cdot]$ satisfy the Jacobi identity for $(\xi_1,\xi_2,\xi_3)$, $(n\xi_1,\xi_2,\xi_3)$, $(\xi_1,n\xi_2,\xi_3)$
		and $(\xi_1,\xi_2,n\xi_3)$, we deduce that $[\cdot,\cdot] + [\cdot,\cdot]_n$ is a Lie bracket.
	\end{proof}
	\begin{remark}
		Note that the
		Nijenhuis torsion measures the failure of the linear map n to define a Lie algebra homomorphism $n:(\mathfrak g, [\cdot,\cdot]_n) \to (\mathfrak g, [\cdot,\cdot])$.
	\end{remark}
	We denote the adjoint representation of the Lie algebra $(\g,[\cdot,\cdot]_n)$ by ${\ad}^n:\g \times \g\to \g$. It is defined by
	\begin{equation}\label{ad-n}
		{\ad}^n_{\xi}=[\ad_{\xi},n]+\ad_{n\xi}\,,\quad \xi\in\g\,,
	\end{equation}
	
	\noindent where $[\ad_{\xi},n]=\ad_{\xi}n-n\ad_{\xi}$. For the simplicity we denote this commutator by 
\begin{equation}
	\underline{\ad}_{\,\xi}:=[\ad_{\xi},n]\,.
\end{equation}
	 The coadjoint representation of the deformed Lie bracket $[\cdot,\cdot]_n$ is then defined by 
	\begin{equation}\label{ad*-n}
		({\ad}^n)^*_{\xi}=[{}^tn,\ad^*_{\xi}]+\ad^*_{n\xi}\,,\quad \xi\in\g\,,
	\end{equation}
	\noindent where
		\begin{equation} {\ads}^*_{\,\xi}:= [{ }^tn,\ad^*_{\xi}]={ }^tn\ad^*_{\xi}-\ad^*_{\xi} { }^t n\,. \end{equation}
	
	\begin{definition}
		A linear operator $n$ on the Lie algebra $\g$ and the adjoint representation of $\g$ are compatible if $C(n,\ad)\equiv 0$, where
		\begin{equation}\label{com1}	
			C(n,\ad)(\xi_1,\xi_2)=[[\ad_{\xi_1},n],[\ad_{\xi_2},n]]+[\ad_{{n[\xi_1,\xi_2]}},n]-[\ad_{\xi_1},[\ad_{n\xi_2},n]-[[\ad_{n\xi_1},n],\ad_{\xi_2}]\,.
		\end{equation}
	\end{definition}
	\noindent 	It is easy to see $C(n,\ad)\equiv0$ if and only if $[\cdot,\cdot]_n$ is a Lie bracket on $\g$.	When $n$ is a Nijenhuis operator, the deformed bracket is a Lie bracket and $\ad^n$ is a representation of the Lie algebra $(\g,[\cdot,\cdot]_n)$ and so $C(n,\ad)\equiv 0$.
 
 	\medskip 
We will adopt the following notation:
	
\noindent For any linear map $\phi:\g \to \g$ on a Lie algebra $\g$, we define the operator $\iota_{\phi}:\wedge^k\mathfrak{g}\to \wedge^k\mathfrak{g}$,  by 
	\begin{equation}\label{phi}
		(\iota_{\phi}P)(\eta_1,\dots \eta_k)=\sum_{i=1}^kP(\eta_1,\dots,\phi(\eta_i),\dots ,\eta_k)\,,	\end{equation}
	for all {$\eta_i\in \mathfrak{g^*}$} and $P\in \wedge^k\mathfrak{g}.$
	\begin{lemma}
	The assignment $\phi \mapsto\iota_{\phi}$
defines an anti-automorphism: $\gl(g) \to \gl(\wedge^k\g)$, that is
		\begin{equation}\label{phii}
			\iota_{[\phi_1,\phi_2]}=-[\iota_{\phi_1},\iota_{\phi_2}]\,.
		\end{equation}
	\end{lemma}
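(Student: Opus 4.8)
The plan is to prove the identity (\ref{phii}) by an elementary pointwise computation, expanding the two compositions $\iota_{\phi_1}\iota_{\phi_2}$ and $\iota_{\phi_2}\iota_{\phi_1}$ directly from the definition (\ref{phi}) and showing that the ``cross terms'' cancel in the commutator while the ``diagonal terms'' reassemble into $-\iota_{[\phi_1,\phi_2]}$.

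Concretely, I would fix $P\in\wedge^k\g$ and $\eta_1,\dots,\eta_k\in\g^*$, put $Q=\iota_{\phi_2}P$, and apply (\ref{phi}) to $\iota_{\phi_1}Q$: the outer operator inserts $\phi_1$ into the $i$-th slot, and then expanding $Q=\iota_{\phi_2}P$ inserts $\phi_2$ into the $j$-th slot, producing a double sum over $(i,j)$. When $j\neq i$ one obtains $P$ evaluated with $\phi_1$ in slot $i$ and $\phi_2$ in slot $j$; when $j=i$ the two operators act on the same argument and compose, giving $P$ evaluated with $\phi_2\circ\phi_1$ in slot $i$. Thus
\[
(\iota_{\phi_1}\iota_{\phi_2}P)(\eta_1,\dots,\eta_k)=\sum_{i\neq j}P(\dots,\phi_1(\eta_i),\dots,\phi_2(\eta_j),\dots)+\sum_{i=1}^k P(\dots,\phi_2(\phi_1(\eta_i)),\dots),
\]
and symmetrically for $\iota_{\phi_2}\iota_{\phi_1}P$ with the roles of $\phi_1$ and $\phi_2$ exchanged. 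Subtracting, the off-diagonal sums coincide after relabelling the two running indices and hence cancel, leaving
\[
([\iota_{\phi_1},\iota_{\phi_2}]P)(\eta_1,\dots,\eta_k)=\sum_{i=1}^k P\big(\eta_1,\dots,(\phi_2\phi_1-\phi_1\phi_2)(\eta_i),\dots,\eta_k\big).
\]
Since $\phi_2\phi_1-\phi_1\phi_2=-[\phi_1,\phi_2]$ and, by (\ref{phi}), $\phi\mapsto\iota_\phi$ is $\R$-linear in its subscript, the right-hand side equals $-(\iota_{[\phi_1,\phi_2]}P)(\eta_1,\dots,\eta_k)$; as $P$ and the $\eta_i$ are arbitrary, this is exactly (\ref{phii}). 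The $\R$-linearity of $\phi\mapsto\iota_\phi$ follows immediately from (\ref{phi}) and the multilinearity of $P$, so the map is a Lie algebra anti-homomorphism $\gl(\g)\to\gl(\wedge^k\g)$, injective whenever $k<\dim\g$.

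I do not expect any genuine obstacle: the statement is essentially a bookkeeping identity. The one point that needs care is the order of composition in the diagonal term --- applying $\iota_{\phi_1}$ after $\iota_{\phi_2}$ makes the operators act on a repeated slot as $\phi_1$ first and then $\phi_2$, so the surviving term carries $\phi_2\phi_1$, and the antisymmetrisation in the commutator therefore produces $\phi_2\phi_1-\phi_1\phi_2=-[\phi_1,\phi_2]$, which is the origin of the minus sign in (\ref{phii}). One should also observe that the off-diagonal cancellation uses only the relabelling of the summation indices and not the skew-symmetry of $P$, so it is valid slot by slot.
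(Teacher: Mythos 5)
Your computation is correct and is essentially the paper's own argument: the expansion you derive for $\iota_{\phi_1}\iota_{\phi_2}P$ into cross terms plus the diagonal term $\iota_{\phi_2\phi_1}P$ is exactly the paper's identity (\ref{phiii}), and the cancellation of the off-diagonal sums under antisymmetrization is the ``straightforward'' step the paper leaves implicit. The only difference is that you spell out the relabelling and the sign bookkeeping explicitly (plus an unneeded aside on injectivity), which the paper omits.
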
 
	\begin{proof}
	The proof is straightforward by using 	\begin{equation}\label{phiii}
			\iota_{\phi_1\phi_2}P(\eta_1,\dots \eta_k)=\iota_{\phi_2}\iota_{\phi_1}P(\eta_1,\dots \eta_k)-\sum\limits_{i,j=1,i\neq j}P(\eta_1,\dots,\phi_1\eta_i,\dots,\phi_2\eta_j,\dots,\eta_k)\,.	\mbox{\qedhere}
		\end{equation}
\end{proof}
	Now, let $(\g,[\cdot,\cdot],\delta)$ be a Lie bialgebra and $n$ a Nijenhuis operator on $(\g,[\cdot,\cdot])$. We consider $\delta$ as a cochain in the deformed cohomology, the cohomology induced by $(\ad^n)^{(2)}$ on the Lie algebra $(\g,[\cdot,\cdot]_n)$.  In this context, to denote the specific cohomology being considered, we write $\dsn:\g\to\wedge^2\g$.	By definition, $\dsn$ is a $1$-cocycle in the deformed cohomology if and only if $\partial\dsn\equiv 0$, i.e.
	\begin{equation}\label{deln}
		\partial\dsn(\xi_1,\xi_2)=\iota_{[^tn,\ad^*_{\xi_1}]} \delta(\xi_2)-\iota_{[^tn,\ad^*_{\xi_2}]} \delta(\xi_1)+\ad^{(2)}_{n\xi_1}\delta(\xi_2)-\ad^{(2)}_{n\xi_2}\delta(\xi_1)-\delta[\xi_1,\xi_2]_n=0\,.
	\end{equation}
	In this case, $\dsn$ defines a new Lie bialgebra $(\g,[\cdot,\cdot]_n, \delta^n)$. Note that $\delta$ and $\dsn$ satisfy the same defining conditions, even though for different Lie algebras.
	
	\subsection{Deformation of a one-cocycle on a Lie algebra: almost NL bialgebras}
	
	Let $(\g,[\cdot,\cdot], \delta)$ be a Lie bialgebra and $n:\g \to \g$ an (almost) Nijenhuis operator on $(\g,[\cdot,\cdot])$. If $^tn:\g^* \to \g^*$ is an (almost) Nijenhuis structure on $\g^*$ then we have a new Lie bracket on the dual Lie algebra $\g^*$ defined by 
	\begin{equation}\label{nt}
		[\eta_1,\eta_2]^{{}^tn} = [^tn\eta_1,\eta_2]_{\g^*} +[\eta_1, {}^tn\eta_2]_{\g^*}-{}^tn[\eta_1,\eta_2]_{\g^*},\mbox{ for all }\eta_i\in \mathfrak{g}^*.
	\end{equation}
	
	\noindent Therefore, we have a new pair of Lie algebras $((\mathfrak g,[\cdot,\cdot]), (\mathfrak g^*,[\cdot,\cdot]^{^tn}))$. The goal is to look for the condition that this defines again a Lie bialgebra. We denote $\dn$ the deformed $1$-cocycle by $^tn$ associated with this Lie bialgebra. Note that for the sake of simplicity, we use the notation $[\cdot,\cdot]^{^tn}$ instead of $[\cdot,\cdot]^{^tn}_{\g^*}$. 
 
	\begin{proposition}\label{prop-equiv1}
		Let  $(\mathfrak g,[\cdot,\cdot]), \delta)$ be a Lie bialgebra and  $n:\mathfrak g \to \mathfrak g$ (respectively, $^tn:\mathfrak g^* \to \mathfrak g^*$) an (almost)  Nijenhuis operator on the Lie algebra $(\mathfrak g,[\cdot,\cdot])$ (respectively, $(\mathfrak g^*,[\cdot,\cdot]_{\g^*})$). The pair $((\mathfrak g,[\cdot,\cdot]), (\mathfrak g^*,[\cdot,\cdot]^{^tn}))$ forms a Lie bialgebra if and only if 
		$\iota_{{}^tn}\circ \d^*-\d^*\circ n:\mathfrak g\to \wedge^2\mathfrak g$ is a $1$-cocycle for the cohomology induced by representation  $\ad^{(2)}:  \g\times \wedge^2\g\to \wedge^2\g$ of $(\g, [\cdot,\cdot])$, where $\d^*$ is the algebraic differential induced by  the Lie bracket $[\cdot,\cdot]_{\g^*}$. 
		
	\end{proposition}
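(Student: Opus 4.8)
The plan is to reduce the equivalence to one observation: for the candidate pair, one of the two Lie-bialgebra axioms holds \emph{automatically} under the stated hypotheses, and the remaining one is precisely the cocycle condition in the statement once the $1$-cocycle dual to $[\cdot,\cdot]^{{}^tn}$ is made explicit.

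First I would name the candidate structure. The bracket $[\cdot,\cdot]^{{}^tn}$ of $(\ref{nt})$ is skew-symmetric, so its transpose is a linear map $\delta_{{}^tn}\colon\g\to\wedge^2\g$; by the definition of a Lie bialgebra, the pair $((\g,[\cdot,\cdot]),(\g^*,[\cdot,\cdot]^{{}^tn}))$ is a Lie bialgebra exactly when $(\g,[\cdot,\cdot],\delta_{{}^tn})$ is one, i.e. when $(i)$ $\delta_{{}^tn}$ is a $1$-cocycle for the cohomology of $(\g,[\cdot,\cdot])$ induced by $\ad^{(2)}$, and $(ii)$ the dual map $(\delta_{{}^tn})^{t}=[\cdot,\cdot]^{{}^tn}$ is a Lie bracket on $\g^{*}$.

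Next I would dispose of $(ii)$. By $(\ref{defn})$ applied to $\g^{*}$, the bracket $[\cdot,\cdot]^{{}^tn}$ of $(\ref{nt})$ is exactly the bracket deformed from $[\cdot,\cdot]_{\g^{*}}$ by the linear operator ${}^tn$ on $\g^{*}$; since ${}^tn$ is assumed to be an (almost) Nijenhuis operator on $(\g^{*},[\cdot,\cdot]_{\g^{*}})$, Proposition~\ref{Lie} already guarantees that $[\cdot,\cdot]^{{}^tn}$ is a Lie bracket. Thus $(ii)$ is free, and the statement collapses to: the pair is a Lie bialgebra if and only if $\delta_{{}^tn}$ is a $1$-cocycle for $\ad^{(2)}$ of $(\g,[\cdot,\cdot])$.

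It then remains to compute $\delta_{{}^tn}$. Evaluating $\delta_{{}^tn}(\xi)(\eta_1,\eta_2)=[\eta_1,\eta_2]^{{}^tn}(\xi)$, expanding the three terms of $(\ref{nt})$, and using both $[\eta_1,\eta_2]_{\g^{*}}(\xi)=\delta(\xi)(\eta_1,\eta_2)$ (from $(\ref{deltaxi})$) and the definition $(\ref{phi})$ of $\iota$ together with the identity $\langle{}^tn\eta,\xi\rangle=\langle\eta,n\xi\rangle$, one gets $\delta_{{}^tn}=\iota_{{}^tn}\circ\delta-\delta\circ n$. Since $\delta$ agrees, up to an overall sign, with the algebraic differential $\d^{*}$ of $(\g^{*},[\cdot,\cdot]_{\g^{*}})$ on $\g\cong(\g^{*})^{*}$, this yields $\iota_{{}^tn}\circ\d^{*}-\d^{*}\circ n=-\delta_{{}^tn}$; in particular the two maps are $1$-cocycles for $\ad^{(2)}$ of $(\g,[\cdot,\cdot])$ simultaneously, and combined with the previous step this proves both implications.

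I expect the only delicate point to be the bookkeeping in the last step: keeping the pairing $\langle\cdot,\cdot\rangle$ and the transposes $n$ versus ${}^tn$ consistent when expanding $(\ref{nt})$, and fixing the sign relating $\d^{*}$ to $\delta$ so that no spurious obstruction appears. Beyond this finite-dimensional linear-algebra identity and the single use of Proposition~\ref{Lie}, there is no genuine difficulty.
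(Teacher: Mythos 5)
Your proof is correct and takes essentially the same route as the paper: Proposition \ref{Lie} disposes of the Lie-bracket condition for $[\cdot,\cdot]^{{}^tn}$ on $\g^*$, and the computation $\delta_{{}^tn}=\iota_{{}^tn}\circ\delta-\delta\circ n$, identified (up to sign convention) with $\iota_{{}^tn}\circ\d^*-\d^*\circ n$, reduces the remaining bialgebra axiom to the stated cocycle condition. The only cosmetic difference is your explicit sign bookkeeping; the paper simply uses the convention $\d^*\xi(\eta_1,\eta_2)=[\eta_1,\eta_2]_{\g^*}(\xi)$ so no sign arises.
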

	\begin{proof}
		
		First, using Proposition \ref{Lie}, $(\mathfrak g,[\cdot,\cdot]_n)$ and  $(\mathfrak g^*,[\cdot,\cdot]^{^tn})$ are  Lie algebras. Since  $\d^*\xi(\eta_1,\eta_2)=[\eta_1,\eta_2]_{\g^*}(\xi)$ for all $\xi\in {\mathfrak g}$ and $\eta_i\in {\mathfrak g}^*,$ 
		\begin{equation}\label{d1}
			([\cdot,\cdot]^{^tn})^t(\xi)(\eta_1,\eta_2)=[\eta_1,\eta_2]^{^tn}(\xi)= \iota_{^tn}(\d^*\xi)(\eta_1,\eta_2)-\d^*(n\xi)(\eta_1,\eta_2)\,,\end{equation}
where $\dn=([\cdot,\cdot]^{^tn})^t$. Therefore,  $\dn$ is a $1$-cocycle if and only if $\iota_{^tn}\circ \d^*-\d^*\circ n: \mathfrak g\to \wedge^2\mathfrak g$ is a $1$-cocycle for the cohomology induced by $\ad^{(2)}$of $({\mathfrak g}, [\cdot,\cdot])$.	
	\end{proof}
The following Corollary gives the relation between the deformed $1$-cocycle $\dn$ for the bialgebra $(\mathfrak g,[\cdot,\cdot], \dn)$ and $1$-cocycle $\delta$ for the bialgebra $(\mathfrak g,[\cdot,\cdot],\delta)$.
	\begin{corollary}
	Let $(\g, [\cdot, \cdot], \delta, n)$ be as in Proposition \ref{prop-equiv1}, then
		\begin{equation}\label{Dn}
			\dn=\iota_{^tn}\delta-\delta \circ n\,.\end{equation}  
	\end{corollary}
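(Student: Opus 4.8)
The plan is to extract the formula directly from the computation already carried out in the proof of Proposition \ref{prop-equiv1}; the only extra observation needed is that the algebraic differential $\d^*$ of $(\g^*,[\cdot,\cdot]_{\g^*})$ coincides with the cocycle $\delta$ of the original Lie bialgebra, so that (\ref{d1}) becomes (\ref{Dn}) verbatim.

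First I would recall that, by definition, $\dn=([\cdot,\cdot]^{^tn})^t$ is the dual of the deformed bracket (\ref{nt}) on $\g^*$, and that equation (\ref{d1}) in the proof of Proposition \ref{prop-equiv1} already records
$$\dn(\xi)(\eta_1,\eta_2)=\iota_{^tn}(\d^*\xi)(\eta_1,\eta_2)-\d^*(n\xi)(\eta_1,\eta_2)$$
for all $\xi\in\g$ and $\eta_1,\eta_2\in\g^*$; equivalently $\dn=\iota_{^tn}\circ\d^*-\d^*\circ n$ as maps $\g\to\wedge^2\g$. Next I would identify $\d^*$ with $\delta$: since $\delta=([\cdot,\cdot]_{\g^*})^t$ we have $\delta(\xi)(\eta_1,\eta_2)=[\eta_1,\eta_2]_{\g^*}(\xi)$, while the algebraic differential of $(\g^*,[\cdot,\cdot]_{\g^*})$ satisfies $\d^*\xi(\eta_1,\eta_2)=[\eta_1,\eta_2]_{\g^*}(\xi)$ — the very relation invoked at the start of the proof of Proposition \ref{prop-equiv1}. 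Hence $\d^*=\delta$, and substituting into the display above gives exactly (\ref{Dn}), namely $\dn=\iota_{^tn}\delta-\delta\circ n$.

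I do not expect any genuine obstacle here: the content is a one-line substitution once (\ref{d1}) is available. The only point demanding care is the sign convention for the algebraic differential — one must use the normalization under which $\d^*\xi(\eta_1,\eta_2)=[\eta_1,\eta_2]_{\g^*}(\xi)$, consistent with (\ref{d1}) and with the ``up to a sign'' remark following the definition of a Lie bialgebra; the opposite choice would flip the sign of the right-hand side of (\ref{Dn}). It is also worth remarking in passing that $\iota_{^tn}\delta-\delta\circ n$ genuinely takes values in $\wedge^2\g$, since $\iota_{^tn}$ preserves $\wedge^2\g$ by (\ref{phi}) and $\delta\circ n$ maps into $\wedge^2\g$ by construction, so the asserted identity is well posed.
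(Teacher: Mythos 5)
Your proposal is correct and matches the paper's (implicit) argument: the corollary is exactly equation (\ref{d1}) from the proof of Proposition \ref{prop-equiv1}, combined with the identification $\d^*=\delta$ coming from $\delta=([\cdot,\cdot]_{\g^*})^t$ and the convention $\d^*\xi(\eta_1,\eta_2)=[\eta_1,\eta_2]_{\g^*}(\xi)$ stated there. Your remark on the sign convention is the right point of care, and the well-posedness observation is a harmless bonus.
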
	
	\smallskip
	\begin{corollary}\label{adadn}
		For a $1$-cocycle $\delta$ and an (almost) Nijenhuis structure $n$ on $\g$, we have  \begin{equation}\label{adnnad}
			\iota_{\ads_{\,\xi}^*}\delta=\iota_{[^tn,\ad^*_{\xi}]}\delta=\ad^{(2)}_{\xi}\iota_{^tn}\delta-\iota_{^tn}\ad^{(2)}_{\xi}\delta\,.	\end{equation}
	\end{corollary}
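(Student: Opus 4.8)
The plan is to derive (\ref{adnnad}) as a purely formal consequence of the anti-automorphism property (\ref{phii}); I note in passing that neither the cocycle condition on $\delta$ nor the Nijenhuis condition on $n$ is actually used in this identity, these hypotheses merely describe the setting in which the corollary will later be invoked.

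The first equality in (\ref{adnnad}) is nothing but the definition $\ads^*_{\,\xi}:=[{}^tn,\ad^*_{\xi}]$, so the only content is the second equality. The key preliminary observation is that the extended adjoint operator $\ad^{(2)}_{\xi}$ is precisely the operator $\iota_{\ad^*_{\xi}}$ of (\ref{phi}): comparing the defining formula (\ref{ad2}) with (\ref{phi}), both send $P\in\wedge^2\g$ to $\sum_i P(\dots,\ad^*_{\xi}\eta_i,\dots)$. Hence $\ad^{(2)}_{\xi}=\iota_{\ad^*_{\xi}}$ as endomorphisms of $\wedge^2\g$.

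Next I would apply (\ref{phii}) with $\phi_1={}^tn$ and $\phi_2=\ad^*_{\xi}$, obtaining
\[
\iota_{[{}^tn,\ad^*_{\xi}]}=-[\iota_{{}^tn},\iota_{\ad^*_{\xi}}]=\iota_{\ad^*_{\xi}}\iota_{{}^tn}-\iota_{{}^tn}\iota_{\ad^*_{\xi}}.
\]
Substituting $\iota_{\ad^*_{\xi}}=\ad^{(2)}_{\xi}$ and $[{}^tn,\ad^*_{\xi}]=\ads^*_{\,\xi}$ gives the operator identity $\iota_{\ads^*_{\,\xi}}=\ad^{(2)}_{\xi}\iota_{{}^tn}-\iota_{{}^tn}\ad^{(2)}_{\xi}$ on $\wedge^2\g$, and post-composing with the linear map $\delta:\g\to\wedge^2\g$ (equivalently, evaluating at $\delta(\zeta)$ for arbitrary $\zeta$) yields exactly (\ref{adnnad}). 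Alternatively, one could unwind (\ref{ad2}), (\ref{ad*-n}) and (\ref{phi}) to verify the identity directly on a pair $(\eta_1,\eta_2)\in\g^*\times\g^*$, but the lemma makes the passage essentially automatic.

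The only point requiring care is sign bookkeeping: the anti-automorphism (\ref{phii}) contributes one sign and the commutator expansion another, so one must confirm that $\ad^{(2)}$ equals $\iota_{\ad^*}$ and not its negative; if in doubt I would check this by evaluating both sides on a decomposable bivector $\xi'\wedge\xi''$. I do not expect a genuine obstacle here, since the statement is a formal corollary of the lemma immediately preceding it.
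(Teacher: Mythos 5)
Your proposal is correct and follows essentially the same route as the paper: identify $\ad^{(2)}_{\xi}=\iota_{\ad^*_{\xi}}$ from (\ref{phi})--(\ref{ad2}) and then apply the anti-automorphism property (\ref{phii}) to the commutator $[{}^tn,\ad^*_{\xi}]$. Your sign bookkeeping is right, and your side remark that neither the cocycle nor the Nijenhuis hypothesis is actually needed for this formal identity is accurate.
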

	\begin{proof}
By (\ref{phi}), for the adjoint representation $\ad_{\xi}$ on the Lie algebra $(\mathfrak g,[\cdot,\cdot])$, $\ad^{(2)}_\xi=\iota_{\ad^*_\xi}\,$; and then by (\ref{phii}) we conclude (\ref{adnnad}).
	\end{proof}
	\begin{proposition}\label{prop-equiv}
		Let $(\mathfrak g,[\cdot,\cdot], \delta)$ be a Lie bialgebra, and let $n:\mathfrak g \to \mathfrak g$ be an (almost) Nijenhuis operator on $(\mathfrak g,[\cdot,\cdot])$. Then $\dn$ is a $1$-cocycle if and only if $\delta$ is a $1$-cocycle in the the deformed cohomology defined by $[\cdot,\cdot]_n$\,.
		
	\end{proposition}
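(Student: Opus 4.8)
The plan is to compute the coboundary $\partial\dn$ of the deformed cochain $\dn=\iota_{{}^t n}\delta-\delta\circ n$ (see (\ref{Dn})) in the cohomology of $(\g,[\cdot,\cdot])$ induced by $\ad^{(2)}$, and to check that it agrees, as a map $\g\times\g\to\wedge^2\g$, with the expression $\partial\dsn$ written out in (\ref{deln}). Once this identity is established the asserted equivalence is immediate, since $\dn$ is a $1$-cocycle exactly when $\partial\dn\equiv0$ while $\delta$ is a $1$-cocycle in the deformed cohomology exactly when $\partial\dsn\equiv0$.

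Concretely, I would start from
\[
\partial\dn(\xi_1,\xi_2)=\ad^{(2)}_{\xi_1}\bigl(\dn\xi_2\bigr)-\ad^{(2)}_{\xi_2}\bigl(\dn\xi_1\bigr)-\dn[\xi_1,\xi_2],
\]
substitute $\dn\xi=\iota_{{}^t n}\delta\xi-\delta(n\xi)$ from (\ref{Dn}), and split the result into the terms carrying $\iota_{{}^t n}\delta$ and the terms carrying $\delta\circ n$. For the first group, Corollary \ref{adadn} gives $\ad^{(2)}_{\xi}\iota_{{}^t n}\delta=\iota_{{}^t n}\ad^{(2)}_{\xi}\delta+\iota_{[{}^t n,\ad^*_{\xi}]}\delta$, so these terms reorganize as $\iota_{{}^t n}\bigl(\ad^{(2)}_{\xi_1}\delta\xi_2-\ad^{(2)}_{\xi_2}\delta\xi_1-\delta[\xi_1,\xi_2]\bigr)+\iota_{[{}^t n,\ad^*_{\xi_1}]}\delta\xi_2-\iota_{[{}^t n,\ad^*_{\xi_2}]}\delta\xi_1$; the first summand vanishes because $\delta$ is a $1$-cocycle for $(\g,[\cdot,\cdot])$ by (\ref{cocycle}), and what is left is exactly the pair of $\iota_{[{}^t n,\ad^*_{\xi_i}]}\delta$-terms occurring in (\ref{deln}).

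For the second group I would use the definition (\ref{defn}) of the deformed bracket in the form $n[\xi_1,\xi_2]=[n\xi_1,\xi_2]+[\xi_1,n\xi_2]-[\xi_1,\xi_2]_n$ and then apply the cocycle identity (\ref{cocycle}) for $\delta$ on $(\g,[\cdot,\cdot])$ to $\delta[n\xi_1,\xi_2]$ and $\delta[\xi_1,n\xi_2]$. The $\ad^{(2)}_{\xi_i}\delta(n\xi_j)$ terms produced in this expansion cancel against the ones coming directly from $\partial\dn$, and one is left precisely with $\ad^{(2)}_{n\xi_1}\delta\xi_2-\ad^{(2)}_{n\xi_2}\delta\xi_1-\delta[\xi_1,\xi_2]_n$, the remaining half of (\ref{deln}). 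Putting the two groups together shows $\partial\dn(\xi_1,\xi_2)=\partial\dsn(\xi_1,\xi_2)$ for all $\xi_1,\xi_2\in\g$; note that Proposition \ref{Lie} is used implicitly here, since it guarantees that $[\cdot,\cdot]_n$ is a Lie bracket and hence that the deformed cohomology --- and the notion of $\delta$ being a cocycle in it --- is well defined.

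The computation is essentially bookkeeping once Corollary \ref{adadn} is available; the only points that require care are matching the commutator terms $\iota_{[{}^t n,\ad^*_{\xi_i}]}\delta$ produced by that corollary with the corresponding terms in (\ref{deln}), and tracking signs in the expansion of $\delta\circ n$ so that the $\ad^{(2)}_{\xi_i}\delta(n\xi_j)$ contributions cancel cleanly rather than doubling up.
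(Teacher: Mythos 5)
Your proposal is correct and follows essentially the same route as the paper's proof: both expand $\partial\dn$ using (\ref{Dn}), invoke Corollary \ref{adadn} to exchange $\ad^{(2)}_{\xi}$ with $\iota_{{}^tn}$, and then use the cocycle identity for $\delta$ (on $[\xi_1,\xi_2]$ as well as on $[n\xi_1,\xi_2]$ and $[\xi_1,n\xi_2]$) together with the definition (\ref{defn}) of $[\cdot,\cdot]_n$ to identify $\partial\dn(\xi_1,\xi_2)$ with the expression (\ref{deln}) for $\partial\dsn(\xi_1,\xi_2)$. The only difference is cosmetic bookkeeping in how the terms are grouped.
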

	\begin{proof}	
		
		By definition, $\dn$ is a $1$-cocycle if and only if $\partial{\dn}\equiv 0$, where by (\ref{Dn}) 
		\begin{equation}\label{ndel}
			\partial{\dn}(\xi_1,\xi_2)=\ad^{(2)}_{\xi_1}\iota_{^tn}\delta(\xi_2)-\ad^{(2)}_{\xi_2}\iota_{^tn}\delta(\xi_1)-\ad^{(2)}_{\xi_1}\delta(n\xi_2)+\ad^{(2)}_{\xi_2}\delta(n\xi_1)-\dn([\xi_1,\xi_2])\,.
		\end{equation}
		On the other hand, by (\ref{Dn}) and since $\delta$ is a $1$-cocycle
		$$\dn([\xi_1,\xi_2])=\iota_{^tn} \ad^{(2)}_{\xi_1}\delta(\xi_2)-\iota_{^tn} \ad^{(2)}_{\xi_2}\delta(\xi_1)-\delta(n[\xi_1,\xi_2])\,.$$
		
		\noindent 	Then, using (\ref{adnnad}), we get 
		\begin{equation}\label{2}
			\partial{\dn}(\xi_1,\xi_2)=\iota_{[^tn,\ad^*_{\xi_1}]}\delta(\xi_2)- \iota_{[^tn,\ad^*_{\xi_2}]}\delta(\xi_1)-\ad^{(2)}_{\xi_1}\delta(n\xi_2)+\ad^{(2)}_{\xi_2}\delta(n\xi_1)+\delta(n[\xi_1,\xi_2])\,.
		\end{equation}
		\noindent Finally, by substituting
		$$-\ad^{(2)}_{\xi_1}\delta(n\xi_2)+\ad^{(2)}_{\xi_2}\delta(n\xi_1)=-\delta[\xi_1,n\xi_2]-\ad^{(2)}_{n\xi_2}\delta(\xi_1)-\delta[n\xi_1,\xi_2]+\ad^{(2)}_{n\xi_1}\delta(\xi_2)\,,$$
		and (\ref{defn}) in (\ref{2}) we get (\ref{deln}) and hence $$\partial \dn\equiv 0 \iff \partial \dsn\equiv 0\,. \mbox{\qedhere}$$
\end{proof}

	\begin{definition}
		Let $(\g,[\cdot,\cdot], \delta)$ be a Lie bialgebra and  $n$ be an (almost) Nijenhuis operator on the Lie algebra $(\g,[\cdot,\cdot])$. We say  $(\g,[\cdot,\cdot], \delta,n)$ is an {\it almost NL bialgebra} if
		\begin{enumerate}
			\item[(i)]		 $\delta$ is also a $1$-cocycle in the deformed cohomology, i.e. $\partial\dsn\equiv 0$. 
			
			\item[(ii)]	$C(^tn, \ad^*)\equiv 0$ where $\ad^*:\g^*\times \g^*\to\g^*$ is the adjoint representation of $(\g^*,[\cdot,\cdot]_{\g^*})$.
		\end{enumerate}	
	\end{definition}
	\noindent 	The condition $(i)$ shows that $(\mathfrak g,[\cdot,\cdot]_n,\delta)$ is a Lie bialgebra. Furthermore, according to the Proposition \ref{prop-equiv}, the deformation $\dn$ of $\delta$ by $^tn$ is also a $1$-cocycle, i.e. $\partial \delta_{^tn}\equiv 0$. The condition $(ii)$ implies $[\cdot,\cdot]^{^tn}$ is a Lie bracket on $\g^*$ and so
	$(\mathfrak g,[\cdot,\cdot], \delta_{^tn})$ is also a Lie bialgebra.
	\smallskip

	\subsection{Double Deformation: (weak) NL bialgebras} 	
		\noindent Let $(\g,[\cdot,\cdot],\delta,n)$ be an almost NL bialgebra. We can deform the $1$-cocycle $\dsn$ by means of $^tn$, or take $\delta_{^tn}$ on the deformed cohomology defined by $n$. The result is the same, a double deformed $1$-cochain $\dsn_{\,^tn}:\g\to  \wedge ^2\g $ by means of $(n,{}^tn)$ in the deformed cohomology induced by $(\ad^n)^{(2)}:\g\times \wedge ^2\g\to  \wedge ^2\g $ on the Lie algebra $(\g,[\cdot,\cdot]_n)$, see the diagram below.  
\begin{figure}[H]
	\subfloat{
    \xymatrix@R-5mm @C-7mm{
        & \delta:((\g,[\cdot,\cdot]),(\g^*,[\cdot,\cdot]_{\g^*})) \ar@{.>}[dl]_{n} \ar@{.>}[dr]^{^tn} & \cr
        \dsn: ((\g,[\cdot,\cdot]_n),(\g^*,[\cdot,\cdot]_{\g^*})) \ar@{.>}[dr]_{^tn} & & \dn: ((\g,[\cdot,\cdot]),(\g^*,[\cdot,\cdot]^{^tn})) \ar@{.>}[dl]^{~~~~n} \cr
        & \dsn_{^tn}: ((\g,[\cdot,\cdot]_n),(\g^*,[\cdot,\cdot]^{^tn})) &
    }
}
	\caption{Pair of Lie algebras defined by the corresponding $1$-cocycles of Lie bialgebras.}	\end{figure}
     In the following, we discuss the conditions under which the double deformed $1$-cochain $\dsn_{\,^tn}$ constitutes a $1$-cocycle in the deformed cohomology.  This analysis will ultimately lead us to the formulation of  (weak) NL bialgebras.
		\begin{lemma}\label{lemma.rel1}
		For a Nijenhuis operator $n$, and a linear map $\delta:\g\to  \wedge ^2\g $ on $(\g,[\cdot,\cdot])$, we have
		\begin{equation}\label{rel1}
		\ads^*_{\,n\xi}=\ads_{\,\xi}^*\circ {}^tn.
		\end{equation}
			\end{lemma}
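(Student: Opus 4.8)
The plan is to establish \eqref{rel1} by unwinding the definitions of both sides as operators on $\wedge^2\g$ applied to a pair of covectors, and then reducing everything to a single identity between maps $\g^*\to\g^*$. Recall from \eqref{ad-n} and the notation introduced just after it that $\ads_{\,\xi}=[\ad_\xi,n]$ as a map $\g\to\g$, so that its transpose is $\ads^*_{\,\xi}=[{}^tn,\ad^*_\xi]={}^tn\ad^*_\xi-\ad^*_\xi\,{}^tn:\g^*\to\g^*$. Since $\ads^*_{\,n\xi}$ and $\ads^*_{\,\xi}\circ{}^tn$ are both built out of the operators $\ad^*_\bullet$ and ${}^tn$ on $\g^*$, the cleanest route is first to prove the pointwise identity
\begin{equation}\label{rel1-pointwise}
\ads^*_{\,n\xi}(\eta)=\ads^*_{\,\xi}({}^tn\,\eta),\qquad \xi\in\g,\ \eta\in\g^*,
\end{equation}
as an identity in $\g^*$, and then transport it to $\wedge^k\g$ if needed; but in fact \eqref{rel1} as stated is precisely the operator identity \eqref{rel1-pointwise}.

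First I would expand the right-hand side: $\ads^*_{\,\xi}\circ{}^tn=({}^tn\ad^*_\xi-\ad^*_\xi\,{}^tn)\circ{}^tn={}^tn\ad^*_\xi\,{}^tn-\ad^*_\xi\,({}^tn)^2$. Then I would expand the left-hand side: $\ads^*_{\,n\xi}={}^tn\ad^*_{n\xi}-\ad^*_{n\xi}\,{}^tn$. Subtracting, the claim \eqref{rel1} becomes equivalent to
\begin{equation}\label{rel1-reduced}
{}^tn\bigl(\ad^*_{n\xi}-\ad^*_\xi\,{}^tn\bigr)=\bigl(\ad^*_{n\xi}-{}^tn\ad^*_\xi\bigr){}^tn ,
\end{equation}
after rearranging (moving the ${}^tn\ad^*_\xi\,{}^tn$ term appropriately), so the whole statement collapses to the single relation
\begin{equation}\label{rel1-key}
{}^tn\ad^*_{n\xi}-\ad^*_{n\xi}\,{}^tn = {}^tn\ad^*_\xi\,{}^tn - {}^tn\ad^*_\xi\,{}^tn = 0\ \text{?}
\end{equation}
— this does not look right, which signals that the genuine input must be the vanishing Nijenhuis torsion $\lcf n,n\rcf=0$, dualized. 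The correct reduction is: dualize \eqref{bracket--n} with $\lcf n,n\rcf=0$, i.e. $n[\xi_1,\xi_2]_n=[n\xi_1,n\xi_2]$, which upon writing out $[\cdot,\cdot]_n$ via \eqref{defn} gives the operator identity $\ad_{n\xi}\circ n = n\circ\ad^n_\xi = n\circ([\ad_\xi,n]+\ad_{n\xi})$ on $\g$; equivalently $\ad_{n\xi}n-n\ad_{n\xi}=n[\ad_\xi,n]=n\ads_{\,\xi}$, that is $[\,\ad_{n\xi},n\,]=n\ads_{\,\xi}$. Taking transposes, $[{}^tn,\ad^*_{n\xi}]={}^tn\,\ads^*_{\,\xi}$? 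Not quite — one must be careful that the transpose of $n\ads_{\,\xi}$ is $\ads^*_{\,\xi}\,{}^tn$, which is exactly the right-hand side of \eqref{rel1}. So the chain is: $\lcf n,n\rcf=0$ $\Rightarrow$ $\ad_{n\xi}\,n - n\,\ad_{n\xi} = n\,\ads_{\,\xi}$ on $\g$ $\Rightarrow$ (transpose) $\ads^*_{\,n\xi}:={}^tn\,\ad^*_{n\xi}-\ad^*_{n\xi}\,{}^tn=\ads^*_{\,\xi}\circ{}^tn$, which is \eqref{rel1}.

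So the proof is: start from $\lcf n,n\rcf=0$, use \eqref{bracket--n} and \eqref{defn} to get $n[\xi_1,\xi_2]_n=[n\xi_1,n\xi_2]$, specialize/rewrite this as the operator identity $[\ad_{n\xi},n]=n\,[\ad_\xi,n]$ on $\g$ (this is just the statement $\ad^n_{n\xi}=\ad_{n\xi}\circ n\circ n^{-1}$-free rephrasing, obtained by applying both sides to an arbitrary $\zeta\in\g$ and comparing with the definition \eqref{ad-n} of $\ad^n$), and finally take transposes, recalling ${}^t(\ad_{n\xi}n)=\ad^*_{n\xi}\,{}^tn$ wait — $^t(AB)={}^tB\,{}^tA$, so ${}^t(\ad_{n\xi}\,n)={}^tn\,\ad^*_{n\xi}$ and ${}^t(n\,\ad_{n\xi})=\ad^*_{n\xi}\,{}^tn$, hence ${}^t([\ad_{n\xi},n])={}^tn\,\ad^*_{n\xi}-\ad^*_{n\xi}\,{}^tn=\ads^*_{\,n\xi}$, while ${}^t(n[\ad_\xi,n])={}^t([\ad_\xi,n])\,{}^tn=\ads^*_{\,\xi}\,{}^tn$. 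This yields \eqref{rel1} directly. The only real subtlety — the step I expect to need the most care — is correctly tracking the order-reversal under transposition and the sign in the commutator $[\ad_\xi,n]=\ad_\xi n-n\ad_\xi$ versus $\ads^*_{\,\xi}={}^tn\,\ad^*_\xi-\ad^*_\xi\,{}^tn$, so that the two sides of \eqref{rel1} match on the nose rather than up to a sign; everything else is a direct substitution using the Nijenhuis condition $\lcf n,n\rcf=0$.
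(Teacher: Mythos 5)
Your argument is correct and is essentially the paper's own proof: the vanishing of the Nijenhuis torsion gives the endomorphism identity $[\ad_{n\xi},n]=n[\ad_\xi,n]$ on $\g$, and transposing it yields $[{}^tn,\ad^*_{n\xi}]=[{}^tn,\ad^*_\xi]\circ{}^tn$ on $\g^*$, which is exactly \eqref{rel1}. The only cosmetic point is that you identify ${}^t\ad_\zeta$ with $\ad^*_\zeta$, whereas the paper's convention is $\ad^*_\zeta=-{}^t\ad_\zeta$; this sign enters both sides of the transposed identity and cancels, so the conclusion is unaffected (and the direct-expansion detour you yourself discard at the start could simply be omitted).
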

		\begin{proof}
			Since $n$ is a Nijenhuis operator, the commutator of endomorphism  $[\ad_{n\xi},n]=n[\ad_\xi,n]$ on $\g$ holds and so $[{}^tn, \ad^*_{n\xi}]=[{}^tn,\ad^*_\xi]\circ {}^tn$ on $\g^*$. Hence, we derive (\ref{rel1}).
		\end{proof}

	\begin{theorem}\label{nnt}
		Let $(\g,[\cdot,\cdot],\delta,n)$ be an almost NL bialgebra. The double deformed $1$-cochain $\dsn_{\,^tn}$ is a $1$-cocycle in the deformed cohomology induced by $(\ad^n)^{(2)}:\g\times \wedge ^2\g\to  \wedge ^2\g $ on the Lie algebra $(\g,[\cdot,\cdot]_n)$ if and only if 
		\begin{equation}\label{com3}
			\iota _{^tn\circ \ads^*_{\,\xi_1}}\delta(\xi_2)-
			\iota _{^tn\circ\ads^*_{\,\xi_2}}\delta(\xi_1)=\iota_{\ads^*_{\,\xi_1}}\delta(n\xi_2)-\iota_{\ads^*_{\,\xi_2}}\delta(n\xi_1)\,.
		\end{equation}
	\end{theorem}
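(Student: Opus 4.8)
The plan is to compute directly the deformed coboundary of $\dsn_{\,^tn}$ in the complex induced by $(\ad^n)^{(2)}$ on $(\g,[\cdot,\cdot]_n)$ and to show that, after using the standing hypotheses, it reduces to the difference of the two sides of (\ref{com3}). First I would record the explicit form of the double deformed cochain: condition $(i)$ of an almost NL bialgebra makes $(\g,[\cdot,\cdot]_n,\dsn)$ a Lie bialgebra, so applying (\ref{Dn}) to it and to the operator $^tn$ gives
\[
\dsn_{\,^tn}(\xi)=\iota_{^tn}\delta(\xi)-\delta(n\xi),\qquad \xi\in\g,
\]
which is the same linear map as $\dn$, in agreement with the diagram preceding the statement.

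Next I would expand the deformed cocycle condition
\[
\partial\dsn_{\,^tn}(\xi_1,\xi_2)=(\ad^n)^{(2)}_{\xi_1}\dsn_{\,^tn}(\xi_2)-(\ad^n)^{(2)}_{\xi_2}\dsn_{\,^tn}(\xi_1)-\dsn_{\,^tn}([\xi_1,\xi_2]_n)
\]
by substituting the formula above and $(\ad^n)^{(2)}_{\xi}=\iota_{\ads^*_{\,\xi}}+\ad^{(2)}_{n\xi}$, and then group the outcome into the $\iota_{\ads^*}$-block, the $\ad^{(2)}_{n\xi}$-block, and the bracket term. In the bracket term one has $n[\xi_1,\xi_2]_n=[n\xi_1,n\xi_2]$ by (\ref{bracket--n}) ($n$ being Nijenhuis), and each $\delta$ of a commutator is expanded using $\partial\delta=0$. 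Combining this with the $\ad^{(2)}_{n\xi}$-block, the summands $\ad^{(2)}_{n\xi_i}\delta(n\xi_j)$ cancel in pairs, and Corollary \ref{adadn} (applied at $n\xi_i$) rewrites $\ad^{(2)}_{n\xi_i}\iota_{^tn}\delta-\iota_{^tn}\ad^{(2)}_{n\xi_i}\delta$ as $\iota_{\ads^*_{\,n\xi_i}}\delta$, so that this combined block reduces to
\[
\iota_{\ads^*_{\,n\xi_1}}\delta(\xi_2)-\iota_{\ads^*_{\,n\xi_2}}\delta(\xi_1)+\iota_{^tn}\big(\ad^{(2)}_{\xi_2}\delta(n\xi_1)-\ad^{(2)}_{\xi_1}\delta(n\xi_2)+\delta(n[\xi_1,\xi_2])\big).
\]

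It then remains to add the $\iota_{\ads^*}$-block, which contributes $\iota_{\ads^*_{\,\xi_1}}\iota_{^tn}\delta(\xi_2)-\iota_{\ads^*_{\,\xi_1}}\delta(n\xi_2)$ together with the term obtained by $1\leftrightarrow 2$. Using Lemma \ref{lemma.rel1} in the form $\ads^*_{\,n\xi_1}=\ads^*_{\,\xi_1}\circ{}^tn$ together with (\ref{phiii}), one writes both $\iota_{\ads^*_{\,\xi_1}}\iota_{^tn}$ and $\iota_{^tn}\iota_{\ads^*_{\,\xi_1}}$ as $\iota$ of the corresponding composition plus a correction term symmetric in the two operators; these corrections coincide and cancel in the sum, yielding
\[
\iota_{\ads^*_{\,\xi_1}}\iota_{^tn}\delta(\xi_2)+\iota_{\ads^*_{\,n\xi_1}}\delta(\xi_2)=\iota_{^tn\circ\ads^*_{\,\xi_1}}\delta(\xi_2)+\iota_{^tn}\iota_{\ads^*_{\,\xi_1}}\delta(\xi_2).
\]
After this substitution $\partial\dsn_{\,^tn}(\xi_1,\xi_2)$ takes the form
\[
\Big(\iota_{^tn\circ\ads^*_{\,\xi_1}}\delta(\xi_2)-\iota_{^tn\circ\ads^*_{\,\xi_2}}\delta(\xi_1)\Big)-\Big(\iota_{\ads^*_{\,\xi_1}}\delta(n\xi_2)-\iota_{\ads^*_{\,\xi_2}}\delta(n\xi_1)\Big)+\iota_{^tn}E(\xi_1,\xi_2),
\]
where the first two bracketed groups are exactly the left- and right-hand sides of (\ref{com3}) and
\[
E(\xi_1,\xi_2)=\iota_{\ads^*_{\,\xi_1}}\delta(\xi_2)-\iota_{\ads^*_{\,\xi_2}}\delta(\xi_1)+\ad^{(2)}_{\xi_2}\delta(n\xi_1)-\ad^{(2)}_{\xi_1}\delta(n\xi_2)+\delta(n[\xi_1,\xi_2]).
\]

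Finally one checks that $E\equiv 0$: rewriting $\iota_{\ads^*_{\,\xi_i}}\delta$ by Corollary \ref{adadn} and applying $\partial\delta=0$ once more turns $E(\xi_1,\xi_2)$ into precisely the right-hand side of (\ref{ndel}), i.e. $\partial\dn(\xi_1,\xi_2)$, and $\partial\dn\equiv 0$ since condition $(i)$ gives $\partial\dsn\equiv 0$, which by Proposition \ref{prop-equiv} is equivalent to $\partial\dn\equiv 0$. Hence $\partial\dsn_{\,^tn}(\xi_1,\xi_2)$ equals the difference of the two sides of (\ref{com3}), and vanishing of $\partial\dsn_{\,^tn}$ is equivalent to (\ref{com3}). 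I expect the main obstacle to be the cancellation of the correction terms arising from (\ref{phiii}): one must track carefully the order of the compositions and recognise Lemma \ref{lemma.rel1} as exactly the input that makes the two corrections match. A secondary, purely organisational difficulty is to keep the distinct cocycle identities ($\partial\delta=0$ on $(\g,[\cdot,\cdot])$ and $\partial\dsn\equiv 0$, hence $\partial\dn\equiv 0$) attached to the correct summands throughout the reduction.
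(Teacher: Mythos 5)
Your proposal is correct and follows essentially the same route as the paper's proof: a direct expansion of $\partial\dsn_{\,^tn}$ using $\dn=\iota_{^tn}\delta-\delta\circ n$, the decomposition $(\adn_{\xi})^{(2)}=\iota_{\ads^*_{\xi}}+\ad^{(2)}_{n\xi}$, Corollary \ref{adadn}, the symmetric-correction identity (\ref{phiii})/(\ref{phii}), and Lemma \ref{lemma.rel1}, reducing everything to the difference of the two sides of (\ref{com3}). The only (harmless) difference is bookkeeping: the paper injects $\partial\dsn\equiv 0$ directly to rewrite $\delta([\xi_1,\xi_2]_n)$, whereas you expand with $\partial\delta=0$ alone and then identify the residual block as $\iota_{^tn}\,\partial\dn$, which vanishes by condition $(i)$ together with Proposition \ref{prop-equiv}.
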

\begin{proof}
	By definition, $\dsn_{^tn}$ is a $1$-cocycle if 
	\begin{equation}\label{delnnt}
		(\adn_{\,\xi_1})^{(2)}\delta_{^tn}(\xi_2)-({\adn_{\,\xi_2}})^{(2)}\delta_{^tn}(\,\xi_1)=\delta_{^tn}[\xi_1,\xi_2]_n\,.
	\end{equation}
	For the left hand side of (\ref{delnnt}), by (\ref{ad-n}), (\ref{adnnad}) and $\partial \delta\equiv 0$, we get
	\begin{equation}\label{left}
		\iota_{\ads^*_{\,\xi_1}}(\iota_{^tn}\delta(\xi_2))-\iota_{\ads^*_{\,\xi_2}}(\iota_{^tn}\delta(\xi_1))+\adt_{n\xi_1}\iota_{^tn}\delta(\xi_2)-\adt_{n\xi_2}\iota_{^tn}\delta(\xi_2)-\iota_{\ads^*_{\,\xi_1}}\delta(n\xi_2)+\iota_{\ads^*_{\,\xi_2}}\delta(n\xi_1)-\delta[n\xi_1,n\xi_2]\,.
	\end{equation}
	
	\noindent For the right hand side of (\ref{delnnt}), by (\ref{defn}),  (\ref{delnnt}) and $\partial\dsn\equiv 0$, we get
	\begin{equation}\label{right}
		\iota_{^tn}(\iota_{\ads^*_{\,\xi_1}}\delta(\xi_2))-\iota_{^tn}(\iota_{\ads^*_{\,\xi_2}}\delta(\xi_1))+\iota_{^tn} \adt_{n\xi_1}\delta(\xi_2)-\iota_{^tn}\adt_{n\xi_2}\delta(n\xi_1)-\delta[n\xi_1,n\xi_2]\,.
	\end{equation}
	\noindent By (\ref{phii}), we have
	\begin{equation}\label{rel2}
		\iota_{[^tn,\ads^*_{\,\xi_1}]}\delta(\xi_2)(\eta_1,\eta_2)=-[\iota_{^tn},\iota_{\ads^*_{\,\xi_1}}]\delta(\xi_2)(\eta_1,\eta_2)\,,\quad \forall\xi_1,\xi_2 \in \g\,,\eta_1,\eta_2\in \g^*.
	\end{equation}
	
	\noindent Finally, by substituting (\ref{left}) and (\ref{right}) in (\ref{delnnt}) and then using (\ref{rel1}) and (\ref{rel2}) we get (\ref{com3}).
\end{proof}

	\begin{definition}
		An almost NL bialgebra $(\g,[\cdot,\cdot],\delta,n)$  is a {\it weak NL bialgebra} if (\ref{com3}) is satisfied. 
	\end{definition}
	\begin{definition}
		Let $(\g,[\cdot,\cdot],\delta)$ be a Lie bialgebra and let $n$ be a Nijenhuis structure on the Lie algebra $(\g,[\cdot,\cdot])$. We define the concomitant $C(\delta,n)$ as a $(2,1)$-tensor field on $\g$ and we interpret the expression in (\ref{com3}) as the condition for the vanishing of this tensor field.
			\end{definition}
		\begin{comment}
	By	 (\ref{bi-alg}), the concomitant $C(\delta,n)$ can be written as
	\[
	\begin{array}{rcl}
		&&	C(\delta,n)(\eta_1,\eta_2)([\xi_1,\xi_2])=\\[5pt]
		&&[{}^tn\ads^*_{\,\xi_1}\eta_1,\eta_2]_{\g^*}(\xi_2)+	[\eta_1,{}^tn\ads^*_{\,\xi_1}\eta_2]_{\g^*}(\xi_2)-[{}^tn\ads^*_{\,\xi_2}\eta_1,\eta_2]_{\g^*}(\xi_1)-	[\eta_1,{}^tn\ads^*_{\,\xi_2}\eta_2]_{\g^*}(\xi_1)	-\\[5pt]
		&&{}^tn[\ads^*_{\,\xi_1}\eta_1,\eta_2]_{\g^*}(\xi_2)-{}^tn[\eta_1,\ads^*_{\,\xi_1}\eta_2]_{\g^*}(\xi_2)+{}^tn[\ads^*_{\,\xi_2}\eta_1,\eta_2]_{\g^*}(\xi_1)+{}^tn[\eta_1,\ads^*_{\,\xi_2}\eta_2]_{\g^*}(\xi_1)\,.
	\end{array}
	\]	
When the concomitant vanishes, we say that $n$ is $\ad^{(2)}$-equivariant. If we have $$[\iota_{\ad^*_{\xi_1}}\lcf {}^tn,{}^tn \rcf (\eta_1,\eta_2)](\xi_2)-[\iota_{\ad^*_{\xi_2}}\lcf {}^tn,{}^tn \rcf (\eta_1,\eta_2)](\xi_2)=0\,,$$
then $C(\delta,n)=0$ is equivalent to
\[
\begin{array}{rcl}
\ads^*_{\xi_1}\dn(\xi_2)(\eta_1,\eta_2)-\circlearrowleft &=&\delta(\xi_2)((n^t\eta_1,\ads^*_{\xi_1}\eta_2)+(\ads^*_{\xi_1}\eta_1, {}^tn\eta_2))-\circlearrowleft
\end{array}
\]
  where by $\circlearrowleft$ we mean the permutation of $\xi_1$ and $\xi_2\,$.
	\end{comment}

	\begin{definition}
Let $n:\g\to\g$ be a Nijenhuis operator on a Lie algebra $(\g,[\cdot,\cdot])$ and $(\g,[\cdot,\cdot],\delta)$ be a Lie bialgebra. A tuple $(\g,[\cdot,\cdot],\delta,n)$ is an {\it NL bialgebra}  if satisfies the following conditions
		\begin{enumerate}
			\item [(1)] $\partial\delta^n\equiv 0$ 
			\item [(2)] $\lcf {}^tn,{}^tn\rcf \equiv 0$
			\item [(3)] $C(\delta,n)\equiv0$
		\end{enumerate}	
    \end{definition}
	
 We remark that for a weak NL bialgebra the condition $\lcf{}^tn,{}^tn\rcf \equiv 0$ is reduced to $C(^tn,\ad^*)\equiv 0$, equivalently reduced to $\partial \lcf{} ^tn,{}^tn\rcf \equiv 0$ with respect to the cohomology induced by the adjoint representation of the Lie algebra $(\g^*,[\cdot,\cdot]_{\g^*})$.
	\section{Solutions of the classical Yang-Baxter equation: coboundary NL bialgebras}\label{sec4}
In this section, we establish fundamental results concerning coboundary Lie bialgebras, focusing on the conditions under which a coboundary Lie bialgebra, in conjunction with a Nijenhuis operator, can be used to construct an NL bialgebra. 
	
	\noindent Suppose that  we have a solution of the classical Yang-Baxter equation $r\in \wedge^2\mathfrak g$ and  a  linear map $n: \mathfrak g\to  \mathfrak g$. Then one may take the bilinear map $nr:\mathfrak g^*\times \mathfrak g^*\to {\mathbb R}$ defined by the linear map $(nr)^{\sharp}:\g^*\to \g$ such that
	$n\rs:=(nr)^{\sharp}=n\circ \rs.$ 
	\begin{proposition}\label{nr}
		Let $r\in \wedge^2{\mathfrak g}$ be a solution of the classical Yang-Baxter equation of the Lie algebra $({\mathfrak g},[\cdot,\cdot])$ and $n: \mathfrak g\to  \mathfrak g$ be a linear map such that
		\begin{enumerate}
			\item [(i)] $n\circ \rs=\rs\circ {}^tn\,,$
			\item [(ii)] $C(r,n)(\eta_1,\eta_2)=\ads^*_{\,\rs\eta_2}\eta_1-\ads^*_{\,\rs\eta_1}\eta_2=0,\quad \forall \eta_i\in\g^*\,.$ 
		\end{enumerate}
		\noindent Here $C(r,n)$ is the concomitant of $r$ and $n$, i.e. the skew-symmetric bilinear map $C(r,n):{\mathfrak g}^*\times {\mathfrak g}^*\to {\mathfrak g}^*$ which is given by 
		\begin{equation}\label{conr}
			C(r,n)(\eta_1,\eta_2)=[{}^tn,\ad^*_{\rs\eta_1}]\eta_2-[{}^tn,\ad^*_{\rs\eta_2}]\eta_1\,.
		\end{equation}
		\noindent	Then $n\rs\in \wedge^2{\mathfrak g}$ is a solution of the classical Yang-Baxter equation on $\mathfrak g$ if and only if 
		$$\lcf n,n\rcf(\rs\eta_1,\rs\eta_2)=0,\quad \mbox{for all}\quad  \eta_i\in \mathfrak g^*\,.$$  
	\end{proposition}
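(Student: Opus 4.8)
The plan is to check, for the bivector $nr$, the characterisation of Proposition~\ref{r}: namely $nr$ solves the classical Yang--Baxter equation if and only if $n\rs\big([\eta_1,\eta_2]_{nr}\big)=-[n\rs\eta_1,n\rs\eta_2]$ for all $\eta_i\in\g^*$, where $[\eta_1,\eta_2]_{nr}=\ad^*_{n\rs\eta_1}\eta_2-\ad^*_{n\rs\eta_2}\eta_1$ is the skew bracket on $\g^*$ attached to $nr$ by (\ref{rbracket}) with $n\rs$ in place of $\rs$. Before invoking it, one records that hypothesis (i), $n\circ\rs=\rs\circ{}^tn$, together with the skew-adjointness ${}^t\rs=-\rs$ of $r$, gives ${}^t(n\rs)={}^t\rs\circ{}^tn=-\rs\circ{}^tn=-n\rs$; hence $nr\in\wedge^2\g$ and Proposition~\ref{r} does apply to it. The strategy is then to compute $n\rs\big([\eta_1,\eta_2]_{nr}\big)+[n\rs\eta_1,n\rs\eta_2]$ in closed form and show that it equals $-\lcf n,n\rcf(\rs\eta_1,\rs\eta_2)$; the equivalence in the statement is then immediate.

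The computation hinges on one intermediate identity: \emph{under hypotheses (i) and (ii), the bracket $[\cdot,\cdot]_{nr}$ coincides with the ${}^tn$-deformation of $[\cdot,\cdot]_r$}, i.e.\ $[\eta_1,\eta_2]_{nr}=[{}^tn\eta_1,\eta_2]_r+[\eta_1,{}^tn\eta_2]_r-{}^tn[\eta_1,\eta_2]_r$ (the deformation formula (\ref{nt}) written for $[\cdot,\cdot]_r$). This is a straightforward expansion: substituting (\ref{rbracket}) into the three terms on the right and using (i) in the form $\rs({}^tn\eta_i)=n\rs\eta_i$ to rewrite every occurrence of $\rs$ composed with ${}^tn$, the right-hand side reorganises as $\big(\ad^*_{n\rs\eta_1}\eta_2-\ad^*_{n\rs\eta_2}\eta_1\big)-\big([{}^tn,\ad^*_{\rs\eta_1}]\eta_2-[{}^tn,\ad^*_{\rs\eta_2}]\eta_1\big)=[\eta_1,\eta_2]_{nr}-C(r,n)(\eta_1,\eta_2)$, with $C(r,n)$ the concomitant (\ref{conr}); hypothesis (ii), $C(r,n)\equiv 0$, then gives the claimed coincidence.

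With this identity at hand, the last step uses the hypothesis that $r$ solves the classical Yang--Baxter equation, via Proposition~\ref{r}(2) in the form $[\rs\alpha,\rs\beta]=-\rs[\alpha,\beta]_r$. Expanding the deformed bracket (\ref{defn}) of $[\cdot,\cdot]$ on $\rs\eta_1,\rs\eta_2$, applying this relation and (i) to each of the three terms, one gets $[\rs\eta_1,\rs\eta_2]_n=-\rs\big([{}^tn\eta_1,\eta_2]_r+[\eta_1,{}^tn\eta_2]_r-{}^tn[\eta_1,\eta_2]_r\big)=-\rs[\eta_1,\eta_2]_{nr}$, the last equality by the intermediate identity. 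Substituting this into the defining identity (\ref{bracket--n}) of the Nijenhuis torsion, $\lcf n,n\rcf(\rs\eta_1,\rs\eta_2)=n[\rs\eta_1,\rs\eta_2]_n-[n\rs\eta_1,n\rs\eta_2]$, yields $[n\rs\eta_1,n\rs\eta_2]=-n\rs[\eta_1,\eta_2]_{nr}-\lcf n,n\rcf(\rs\eta_1,\rs\eta_2)$, that is, $n\rs\big([\eta_1,\eta_2]_{nr}\big)+[n\rs\eta_1,n\rs\eta_2]=-\lcf n,n\rcf(\rs\eta_1,\rs\eta_2)$. Since, by Proposition~\ref{r} applied to $nr$, the bivector $nr$ solves the classical Yang--Baxter equation exactly when the left-hand side vanishes for all $\eta_1,\eta_2$, this is the asserted equivalence with $\lcf n,n\rcf(\rs\eta_1,\rs\eta_2)=0$ for all $\eta_i$.

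The only delicate part should be the bookkeeping in the intermediate identity of the second paragraph --- making sure that the four commutator terms left over after applying (i) assemble precisely into $C(r,n)$ of (\ref{conr}) (the sign convention in which $C(r,n)$ is written should be checked here, though it does not affect the argument, since $C(r,n)$ and $-C(r,n)$ vanish simultaneously). Once that is in place, everything else is forced by the identities (\ref{defn}), (\ref{nt}), (\ref{bracket--n}), (\ref{rbracket}) and Proposition~\ref{r}, and no new construction or estimate is needed.
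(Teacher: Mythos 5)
Your proof is correct and follows essentially the same route as the paper: reduce via Proposition \ref{r} applied to $nr$, use (i) and (ii) to identify $[\cdot,\cdot]_{nr}$ with the ${}^tn$-deformation of $[\cdot,\cdot]_r$, and then combine Proposition \ref{r} for $r$ with the torsion formula (\ref{bracket--n}). The only (immaterial) difference is the overall sign of the torsion term in the final identity, where your sign is in fact the one consistent with the definition (\ref{bracket--n}); since only its vanishing matters, the equivalence is unaffected.
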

	\begin{proof}
		The condition $(i)$ implies the bilinear map $nr$ is skew symmetric. By (\ref{rbracket}), we see that the condition  $(ii)$ implies 
		$$[\eta_1,\eta_2]_{nr}=[\eta_1,\eta_2]_r^{^tn}\,,$$
		where $[\cdot,\cdot]_{nr}$ is the bracket on ${\mathfrak  g}^*$ defined by 
		$$[\eta_1,\eta_2]_{nr}(\xi)=\lcf \xi,nr\rcf(\eta_1,\eta_2)\,,$$ and 
		\begin{equation}\label{nr*}
			[\eta_1,\eta_2]_{r}^{^tn}= [{}^tn\eta_1,\eta_2]_{r}+ [\eta_1,{}^tn\eta_2]_{r}-{}^tn[\eta_1,\eta_2]_{r},\quad \forall \eta_i\in \g^*\,.
		\end{equation}
		Now, from Proposition \ref{r}, it is sufficient to prove  
		\begin{equation}\label{nr-d}
			n\rs[\eta_1,\eta_2]_{nr}=-[n\rs\eta_1, n\rs\eta_2]+ \lcf n,n\rcf(\rs\eta_1,\rs \eta_2).
		\end{equation}
		In fact, using the conditions $(i)$ and $(ii)$, we have
		$$ n\rs[\eta_1,\eta_2]_{nr}=n\rs([{}^tn\eta_1,\eta_2]_{r}+ [\eta_1,{}^tn\eta_2]_{r}-{}^tn[\eta_1,\eta_2]_{r})\,,$$
		and then using Proposition \ref{r} and condition $(i)$, 
		$$n\rs[\eta_1,\eta_2]_{nr}=-n[n\rs\eta_1,\rs\eta_2]- n[\rs\eta_1,n\rs\eta_2]+n^2[\rs\eta_1,\rs\eta_2].$$
		\noindent Finally (\ref{nr-d}) is deduced since 
		$$
		\lcf n, n\rcf (\rs\eta_1,\rs\eta_2)= [n\rs\eta_1,n\rs\eta_2]+n^2[\rs\eta_1,\rs\eta_2]\\[8pt]-n[n\rs\eta_1,\rs\eta_2]-n[\rs\eta_1,n\rs\eta_2]\,.\mbox{\qedhere}$$	\end{proof}
	\begin{corollary}\label{conn}
		If  $r\in \wedge^2\mathfrak g$ is a non-degenerate solution of the classical Yang-Baxter equation and   $n: \mathfrak g\to  \mathfrak g$ is a  linear map such that $n\circ \rs=\rs\circ {}^tn$ and $C(r,n)=0,$ then $nr\in \wedge^2\mathfrak g$ is a solution of the classical Yang-Baxter equation on $\mathfrak g$ if and only if $n$ is Nijenhuis. 
	\end{corollary}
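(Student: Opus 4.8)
The plan is to deduce this as an immediate consequence of Proposition \ref{nr}. Conditions $(i)$ and $(ii)$ of that proposition are exactly the hypotheses assumed here, namely $n\circ\rs=\rs\circ{}^tn$ and $C(r,n)=0$; so Proposition \ref{nr} applies verbatim and yields the equivalence: $nr\in\wedge^2\mathfrak g$ is a solution of the classical Yang-Baxter equation on $\mathfrak g$ if and only if $\lcf n,n\rcf(\rs\eta_1,\rs\eta_2)=0$ for all $\eta_1,\eta_2\in\mathfrak g^*$. Thus the whole task reduces to rewriting this last condition under the extra assumption that $r$ is non-degenerate.

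Next I would use that non-degeneracy of $r$ means precisely that the induced linear map $\rs=r^\sharp:\mathfrak g^*\to\mathfrak g$ is a bijection. Hence, as $(\eta_1,\eta_2)$ runs over $\mathfrak g^*\times\mathfrak g^*$, the pair $(\rs\eta_1,\rs\eta_2)$ runs over all of $\mathfrak g\times\mathfrak g$. Therefore the condition ``$\lcf n,n\rcf(\rs\eta_1,\rs\eta_2)=0$ for all $\eta_i\in\mathfrak g^*$'' is equivalent to ``$\lcf n,n\rcf(\xi_1,\xi_2)=0$ for all $\xi_1,\xi_2\in\mathfrak g$'', i.e.\ to $\lcf n,n\rcf\equiv 0$, which by definition is the statement that $n$ is a Nijenhuis operator. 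Chaining this with the equivalence from Proposition \ref{nr} gives exactly the claimed statement.

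I do not expect any genuine obstacle here: the proof is essentially a one-line reduction. The only substantive point is the elementary observation that non-degeneracy of $r$ promotes the \emph{pointwise} vanishing of the Nijenhuis torsion $\lcf n,n\rcf$ on the image of $\rs$ (which is all that Proposition \ref{nr} delivers in general) to the \emph{global} vanishing $\lcf n,n\rcf\equiv 0$. Care should be taken only to record explicitly that the hypotheses $n\circ\rs=\rs\circ{}^tn$ and $C(r,n)=0$ coincide with conditions $(i)$ and $(ii)$ of Proposition \ref{nr}, so that no part of the hypothesis of that proposition is left unverified.
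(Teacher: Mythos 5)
Your proposal is correct and follows exactly the route the paper intends: the corollary is stated as an immediate consequence of Proposition \ref{nr}, with non-degeneracy of $r$ (i.e.\ bijectivity of $\rs$) upgrading the vanishing of $\lcf n,n\rcf$ on pairs $(\rs\eta_1,\rs\eta_2)$ to the vanishing of $\lcf n,n\rcf$ on all of $\g\times\g$. Nothing is missing.
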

	\begin{lemma}
		The concomitant (\ref{conr}) in terms of the adjoint representation $\adn$ on the Lie algebra $(\g,[\cdot,\cdot]_n)$ reads
		\begin{equation}\label{conn2}
			[C(r,n)(\eta_1,\eta_2)](\xi)=[(\adn_{\xi})^*\eta_1-{}^tn\ad^*_{\xi}\eta_1](\rs\eta_2)-[(\adn_{\xi})^*\eta_2-{}^tn\ad^*_{\xi}\eta_2](\rs\eta_1)\,.
		\end{equation}
	\end{lemma}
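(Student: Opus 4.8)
The plan is to prove the identity by evaluating both sides of~(\ref{conn2}) on an arbitrary $\xi\in\g$ and matching them term by term. This is a purely formal manipulation: it uses only the definition~(\ref{conr}) of the concomitant, the defining formula~(\ref{ad*-n}) for the coadjoint representation of $[\cdot,\cdot]_n$, the coadjoint pairing convention read off from~(\ref{rbracket})--(\ref{corchete}), namely $(\ad^*_{\zeta}\eta)(\xi)=\eta([\xi,\zeta])$ --- so that $({}^tn\,\ad^*_{\zeta}\eta)(\xi)=(\ad^*_{\zeta}\eta)(n\xi)=\eta([n\xi,\zeta])$ and $(\ad^*_{\zeta}\,{}^tn\eta)(\xi)=({}^tn\eta)([\xi,\zeta])=\eta(n[\xi,\zeta])$ --- together with skew-symmetry of $[\cdot,\cdot]$. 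In particular the Nijenhuis hypothesis on $n$ plays no role in this lemma.

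First I would rewrite the operator combination appearing on the right of~(\ref{conn2}). From~(\ref{ad*-n}) and $[{}^tn,\ad^*_{\xi}]={}^tn\,\ad^*_{\xi}-\ad^*_{\xi}\circ{}^tn$ one gets
\[
(\adn_{\xi})^*-{}^tn\,\ad^*_{\xi}=\ad^*_{n\xi}-\ad^*_{\xi}\circ{}^tn .
\]
Pairing $(\adn_{\xi})^*\eta_1-{}^tn\,\ad^*_{\xi}\eta_1$ with $\rs\eta_2$ and using the conventions above yields $\eta_1([\rs\eta_2,n\xi])-\eta_1(n[\rs\eta_2,\xi])$; exchanging $\eta_1\leftrightarrow\eta_2$ handles the second summand. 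Hence the right-hand side of~(\ref{conn2}) equals
\[
\eta_1\bigl([\rs\eta_2,n\xi]-n[\rs\eta_2,\xi]\bigr)-\eta_2\bigl([\rs\eta_1,n\xi]-n[\rs\eta_1,\xi]\bigr).
\]

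Next I would expand the left-hand side straight from~(\ref{conr}): since $[C(r,n)(\eta_1,\eta_2)](\xi)=\bigl([{}^tn,\ad^*_{\rs\eta_1}]\eta_2\bigr)(\xi)-\bigl([{}^tn,\ad^*_{\rs\eta_2}]\eta_1\bigr)(\xi)$ and $\bigl([{}^tn,\ad^*_{\rs\eta_1}]\eta_2\bigr)(\xi)=\eta_2([n\xi,\rs\eta_1])-\eta_2(n[\xi,\rs\eta_1])$ by the pairing convention, one obtains
\[
[C(r,n)(\eta_1,\eta_2)](\xi)=\eta_2\bigl([n\xi,\rs\eta_1]-n[\xi,\rs\eta_1]\bigr)-\eta_1\bigl([n\xi,\rs\eta_2]-n[\xi,\rs\eta_2]\bigr).
\]
Finally, using skew-symmetry of $[\cdot,\cdot]$, i.e.\ $[n\xi,\rs\eta_i]=-[\rs\eta_i,n\xi]$ and $n[\xi,\rs\eta_i]=-n[\rs\eta_i,\xi]$, the last display becomes exactly the expression obtained above for the right-hand side of~(\ref{conn2}), which proves the lemma. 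The whole argument is mechanical; the one thing that must be watched is keeping the sign convention for $\ad^*$ consistent with~(\ref{rbracket})--(\ref{corchete}) at every step, so I do not expect any genuine obstacle.
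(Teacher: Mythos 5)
Your proposal is correct and follows essentially the same route as the paper: expand $C(r,n)$ from (\ref{conr}) using the coadjoint pairing convention, move arguments across $\ad^*$ via the skew-symmetry of $[\cdot,\cdot]$, and identify the resulting combination through (\ref{ad*-n}); your explicit bracket form of the left-hand side coincides with the paper's equation (\ref{com4}). Your remark that the Nijenhuis hypothesis is not needed is also accurate, and your careful tracking of the sign convention confirms the statement of the lemma.
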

	\begin{proof}
		Applying $\xi$ on the concomitant (\ref{conr}), we get
		$$
		[C(r,n)(\eta_1,\eta_2)](\xi)=(\ad^*_{\rs\eta_2}{}^tn\eta_1)(\xi)-(\ad^*_{\rs\eta_2}\eta_1)(n\xi)-(\ad^*_{\rs\eta_1}{}^tn\eta_2)(\xi)-(\ad^*_{\rs\eta_1}\eta_2)(n\xi)\,.
		$$	
		Using $\left\langle\ad^*_{\xi_1}\eta,\xi_2\right\rangle=-\left\langle\ad^*_{\xi_2}\eta,\xi_1\right\rangle$, it turns
		\begin{equation}\label{com5}
			[C(r,n)(\eta_1,\eta_2)](\xi)=[\ad^*_{\xi}{}^tn\eta_1-\ad^*_{n\xi}\eta_1](\rs\eta_2)-[\ad^*_{\xi}{}^tn\eta_2-\ad^*_{n\xi}\eta_2](\rs\eta_1)\,,
		\end{equation}
		and so by (\ref{ad*-n}), we conclude (\ref{conn2}).
			\end{proof}
	
	\begin{remark}
		Using $\left\langle \ad^*_{\xi_1}\eta,\xi_2\right\rangle =-\left\langle\eta,\ad_{\xi_1}\xi_2\right\rangle$, (\ref{conn2}) turns to
		$$
		[C(r,n)(\eta_1,\eta_2)](\xi)=\eta_1([\xi,n\rs\eta_2]-[\xi,\rs\eta_2]_n)-\eta_2([\xi,n\rs\eta_1]-[\xi,\rs\eta_1]_n)\,,	
		$$
		and (\ref{com5}) turns to
		\begin{equation}\label{com4}
			[C(r,n)(\eta_1,\eta_2)](\xi)=\eta_1(n[\xi,\rs\eta_2]-[n\xi,\rs\eta_2])-\eta_2(n[\xi,\rs\eta_1]-[n\xi,\rs\eta_1])\,.
		\end{equation}
	\end{remark}		
	\begin{proposition}\label{nrr}
		Let $r\in \wedge^2{\mathfrak g}$ be a solution of the classical Yang-Baxter equation on the Lie algebra $({\mathfrak g},[\cdot,\cdot])$ and $({\mathfrak g},[\cdot,\cdot],\delta_r)$ the coboundary Lie bialgebra defined by $r$. If $n: \g\to  \g$ is a Nijenhuis operator on $({\mathfrak g},[\cdot,\cdot])$, then under the assumptions of the Proposition \ref{nr}, $n$ defines an almost NL bialgebra $({\mathfrak g},[\cdot,\cdot],\delta_r,n)$.
	\end{proposition}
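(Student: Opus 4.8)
The plan is to verify, for the tuple $(\g,[\cdot,\cdot],\delta_r,n)$, the two defining conditions of an almost NL bialgebra: $(i)$ $\partial\dsn\equiv 0$, and $(ii)$ $C({}^tn,\ad^*)\equiv 0$, where $\ad^*$ is the adjoint representation of the dual Lie algebra $(\g^*,[\cdot,\cdot]_r)$. The guiding observation is that, under the hypotheses of Proposition \ref{nr} together with $n$ being Nijenhuis, the element $nr=n\rs\in\wedge^2\g$ is again a solution of the classical Yang-Baxter equation, and the data obtained by deforming $\delta_r$ by ${}^tn$ is exactly the coboundary data $\lcf\cdot,nr\rcf$ attached to $nr$.

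First I would use that $n$ Nijenhuis gives $\lcf n,n\rcf\equiv 0$, hence $\lcf n,n\rcf(\rs\eta_1,\rs\eta_2)=0$ for all $\eta_i\in\g^*$; Proposition \ref{nr} then shows that $nr$ solves the classical Yang-Baxter equation on $(\g,[\cdot,\cdot])$. Since the dual bracket of $(\g,[\cdot,\cdot],\delta_r)$ is $[\cdot,\cdot]_r$, the deformed bracket $[\cdot,\cdot]^{{}^tn}$ of (\ref{nt}) is precisely $[\cdot,\cdot]_r^{{}^tn}$ of (\ref{nr*}); and the step in the proof of Proposition \ref{nr} that uses its hypothesis $(ii)$, $C(r,n)\equiv 0$, identifies $[\cdot,\cdot]_r^{{}^tn}$ with $[\cdot,\cdot]_{nr}$, the skew bracket on $\g^*$ induced by $nr$. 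Because $nr$ solves the classical Yang-Baxter equation, $[\cdot,\cdot]_{nr}$ is a Lie bracket on $\g^*$ by Proposition \ref{r}, exactly as $[\cdot,\cdot]_r$ is. Thus $[\cdot,\cdot]^{{}^tn}$ is a Lie bracket on $\g^*$, and since (by the equivalence noted right after the definition of $C(n,\ad)$, applied now to $\g^*$ with the operator ${}^tn$ and the adjoint representation $\ad^*$) the vanishing of $C({}^tn,\ad^*)$ is equivalent to $[\cdot,\cdot]^{{}^tn}$ being a Lie bracket, condition $(ii)$ follows.

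For condition $(i)$, I would dualize this bracket identity: by (\ref{d1}), $\dn=([\cdot,\cdot]^{{}^tn})^t=([\cdot,\cdot]_{nr})^t=\lcf\cdot,nr\rcf$. Now $\lcf\cdot,s\rcf$ is a $1$-cocycle for the cohomology induced by $\adt$ of $(\g,[\cdot,\cdot])$ for every $s\in\wedge^2\g$ — it is a coboundary, and the computation following (\ref{deltar}), using only the graded Jacobi identity for $\lcf\cdot,\cdot\rcf$, proves this without assuming $\lcf s,s\rcf=0$ — so $\partial\dn\equiv 0$. By Proposition \ref{prop-equiv}, this is equivalent to $\partial\dsn\equiv 0$, which is condition $(i)$. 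With $(i)$ and $(ii)$ verified, $(\g,[\cdot,\cdot],\delta_r,n)$ is an almost NL bialgebra.

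The only substantive step is the identity $[\cdot,\cdot]^{{}^tn}=[\cdot,\cdot]_{nr}$, which rests on hypothesis $(ii)$ of Proposition \ref{nr} and is already established inside its proof; everything else is bookkeeping — tracking transposes between brackets on $\g^*$ and $1$-cochains $\g\to\wedge^2\g$, and invoking Proposition \ref{prop-equiv} once to move from $\partial\dn\equiv 0$ to $\partial\dsn\equiv 0$. The point requiring care is not to conflate the assumed almost-Nijenhuis property of $n$ on $(\g,[\cdot,\cdot])$ with the almost-Nijenhuis property of ${}^tn$ on $(\g^*,[\cdot,\cdot]_r)$, which is what condition $(ii)$ expresses and what we in fact deduce, and to keep the conventions $\delta_s=([\cdot,\cdot]_s)^t$ and $[\eta_1,\eta_2]_s(\xi)=\lcf\xi,s\rcf(\eta_1,\eta_2)$ applied consistently throughout.
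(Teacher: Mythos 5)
Your proposal is correct and follows essentially the same route as the paper: use Proposition \ref{nr} (with $\lcf n,n\rcf\equiv 0$) to see that $nr$ is again an $r$-matrix, identify $[\cdot,\cdot]^{{}^tn}=[\cdot,\cdot]_r^{{}^tn}=[\cdot,\cdot]_{nr}$ via $C(r,n)\equiv 0$ to get condition $(ii)$, note that $\delta_r^{{}^tn}=\delta_{nr}$ is a $1$-cocycle as a coboundary, and invoke Proposition \ref{prop-equiv} to obtain $\partial\delta_r^n\equiv 0$. The only (harmless) difference is one of bookkeeping: you appeal to Proposition \ref{nr} directly rather than to Corollary \ref{conn}, which in fact avoids the non-degeneracy assumption that the statement of Proposition \ref{nrr} does not impose.
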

	\begin{proof}	
		We need to prove first, $[\cdot,\cdot]_{nr}$ is a Lie bracket and second, $\delta_r$ is a $1$-cocycle in the deformed cohomology by $n$, i.e. $\partial \delta^n_r\equiv 0$. The first is fulfilled since by Cor.~\ref{conn}, $nr\in \wedge^2\g$ is an $r$-matrix on $(\mathfrak g,[\cdot,\cdot])$, i.e. 
$$n\rs[\eta_1,\eta_2]_{nr}=n\rs[\eta_1,\eta_2]_{r}^{{}^tn}=n[\rs\eta_1,\rs \eta_2]_{n}=[n\rs\eta_1,n\rs\eta_2]\,.$$
		
		\noindent Therefore, the deformation of the $1$-cocycle $\delta_r$ by ${}^tn$, $\delta_r^{{}^tn}=\delta_{nr}$, is a $1$-cocycle and so $(\mathfrak g,[\cdot,\cdot], \delta_{nr})$ is a coboundary Lie bialgebra. According to the Proposition \ref{prop-equiv}, $\delta_r$ is also a $1$-cocycle in the deformed cohomology. Thus $(\mathfrak g,[\cdot,\cdot]_n, \delta_r^n)$ is also a Lie bialgebra but not necessarily coboundary.		
			\end{proof}
			
\begin{comment}
	In the above Proposition, one can also observe that 
$(\mathfrak g,[\cdot,\cdot]_n, \delta_r^n)$ is a Lie bialgebra, as demonstrated by direct computation on
		\begin{equation}\label{deltanr}
			\delta_r^n([\xi_1,\xi_2]_n)(\eta_1,\eta_2)=[\eta_1,\eta_2]_r[\xi_1,\xi_2]_n\,.
		\end{equation}
		
		\noindent Actually by plugging (\ref{rbracket}), (\ref{bi-alg}), (\ref{deltar}) and (\ref{defn}) in (\ref{deltanr}) and using (\ref{conn2}) for
		$$
		\begin{array}{rcl}
			&&	[C(r,n)(\ad^*_{\xi_1}\eta_1,\eta_2)](\xi_2)=
			[C(r,n)(\eta_1,\ad^*_{\xi_1}\eta_2)](\xi_2)=\\[3pt]
			&&
			[C(r,n)(\ad^*_{\xi_2}\eta_1,\eta_2)](\xi_1)=	[C(r,n)(\eta_1,\ad^*_{\xi_2}\eta_2)](\xi_1)	=0\,,
		\end{array}
		$$
		\noindent one gets (\ref{deltanr}). 
		\end{comment}
	\begin{corollary}
		Under the assumptions of Proposition \ref{nr}, the relation between $1$-cocycles $\delta_r$ and $\delta_{nr}$ is given by the following expression
		\begin{equation}\label{nm-r}
		\delta_{nr}(\xi)
		(\eta_1,\eta_2)=\delta_{r}(\xi)({}^tn\eta_1,\eta_2)+\delta_{r}(\xi)(\eta_1,{}^tn\eta_2)-\delta_r(n\xi) (\eta_1,\eta_2)\,.
	\end{equation} 
	\end{corollary}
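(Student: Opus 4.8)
The plan is to deduce (\ref{nm-r}) from the identity $[\cdot,\cdot]_{nr}=[\cdot,\cdot]_r^{{}^tn}$ on $\g^*$, which was already established inside the proof of Proposition \ref{nr} as a consequence of hypotheses (i) and (ii) (the latter being $C(r,n)\equiv 0$). Recall that $\delta_{nr}$ is by definition the transpose of the bracket $[\cdot,\cdot]_{nr}$ given by $[\eta_1,\eta_2]_{nr}(\xi)=\lcf\xi,nr\rcf(\eta_1,\eta_2)$, so that $\delta_{nr}(\xi)(\eta_1,\eta_2)=[\eta_1,\eta_2]_{nr}(\xi)$; similarly $\delta_r(\xi)(\mu_1,\mu_2)=[\mu_1,\mu_2]_r(\xi)$ for all $\mu_i\in\g^*$ and $\xi\in\g$.

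First I would substitute $[\eta_1,\eta_2]_{nr}=[\eta_1,\eta_2]_r^{{}^tn}$ and expand the right-hand side via its definition (\ref{nr*}), namely $[\eta_1,\eta_2]_r^{{}^tn}=[{}^tn\eta_1,\eta_2]_r+[\eta_1,{}^tn\eta_2]_r-{}^tn[\eta_1,\eta_2]_r$. Evaluating at $\xi\in\g$ and using the elementary relation $({}^tn\mu)(\xi)=\mu(n\xi)$ on the last summand gives $\delta_{nr}(\xi)(\eta_1,\eta_2)=[{}^tn\eta_1,\eta_2]_r(\xi)+[\eta_1,{}^tn\eta_2]_r(\xi)-[\eta_1,\eta_2]_r(n\xi)$. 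Rewriting each term through the transpose relation $[\mu_1,\mu_2]_r(\zeta)=\delta_r(\zeta)(\mu_1,\mu_2)$ then yields exactly (\ref{nm-r}).

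A second, more self-contained route makes the role of the concomitant explicit, and I would include it as a verification. Since $(nr)^\sharp=n\rs$, formula (\ref{deltar}) gives $\delta_{nr}(\xi)(\eta_1,\eta_2)=\eta_1[\xi,n\rs\eta_2]-\eta_2[\xi,n\rs\eta_1]$, while expanding the three summands on the right of (\ref{nm-r}) by (\ref{deltar}) and using the intertwining $\rs\circ{}^tn=n\circ\rs$ from hypothesis (i), one finds that the difference of the two sides of (\ref{nm-r}), evaluated on $(\eta_1,\eta_2,\xi)$, collapses to $\eta_1\big(n[\xi,\rs\eta_2]-[n\xi,\rs\eta_2]\big)-\eta_2\big(n[\xi,\rs\eta_1]-[n\xi,\rs\eta_1]\big)$. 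By (\ref{com4}) this is precisely $[C(r,n)(\eta_1,\eta_2)](\xi)$, which vanishes by hypothesis (ii).

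There is essentially no obstacle here: the content is bookkeeping with transposes and the definition of the ${}^tn$-deformed bracket. The only point needing care is to invoke the hypotheses of Proposition \ref{nr} at the correct places — condition (i) both to make $nr$ skew-symmetric and to supply the intertwining $\rs\circ{}^tn=n\circ\rs$, and condition (ii) ($C(r,n)\equiv 0$) to justify the key substitution $[\cdot,\cdot]_{nr}=[\cdot,\cdot]_r^{{}^tn}$ — since without (ii) the identity (\ref{nm-r}) is false, its defect being exactly $C(r,n)$.
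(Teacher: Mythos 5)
Your proposal is correct, and your second (verification) route is precisely the paper's own proof: expand both sides of (\ref{nm-r}) via (\ref{deltar}), use the intertwining $n\circ\rs=\rs\circ{}^tn$ from hypothesis (i), and observe that the defect is exactly $[C(r,n)(\eta_1,\eta_2)](\xi)$ in the form (\ref{com4}), which vanishes by hypothesis (ii). Your first route is a mild variant the paper does not spell out: instead of recomputing, you transpose the identity $[\cdot,\cdot]_{nr}=[\cdot,\cdot]_r^{{}^tn}$ already obtained in the proof of Proposition \ref{nr} from condition (ii), expand $[\cdot,\cdot]_r^{{}^tn}$ by (\ref{nr*}), and use $({}^tn\mu)(\xi)=\mu(n\xi)$; this buys brevity by reusing that lemma-level fact (so the hypotheses enter only through it), whereas the paper's direct computation has the advantage of exhibiting explicitly that the failure of (\ref{nm-r}) is measured by the concomitant $C(r,n)$ — the observation you also record at the end. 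Both arguments are sound; just keep the convention $\delta_{nr}=([\cdot,\cdot]_{nr})^t$ consistent with (\ref{deltar}) when identifying terms, as you do.
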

	\begin{proof}
		Indeed, 
		\[
		\begin{array}{rcl}
			&&\delta_{r}(\xi)({}^tn\eta_1,\eta_2)+\delta_{r}(\xi)(\eta_1,{}^tn\eta_2)-\delta_{r}(n\xi)(\eta_1,\eta_2)=\\[5pt]
			&&	\lcf \xi,r\rcf ({}^tn\eta_1,\eta_2)
			+
			\lcf \xi,r\rcf(\eta_1,{}^tn\eta_2)-\lcf n\xi,r\rcf(\eta_1,\eta_2)=\\[5pt]
			&&	 {}^tn\eta_1([\xi,\rs\eta_2])-\eta_2([\xi,\rs {}^tn\eta_1])
			+\eta_1([\xi,\rs {}^tn\eta_2])-{}^tn\eta_2([\xi,\rs\eta_1]-\eta_1([n\xi,\rs\eta_2])+\eta_2([n\xi,\rs\eta_1]\,.
		\end{array}\]	
		Using  $n\circ \rs=\rs\circ {}^tn\,$, we obtain  
		\[
		\begin{array}{rcl}
			&&\delta_{r}(\xi)({}^tn\eta_1,\eta_2)+\delta_{r}(\xi)(\eta_1,{}^tn\eta_2)-\delta_{r}(n\xi)(\eta_1,\eta_2)=\\[5pt]
			&&\eta_1[\xi,n\rs\eta_2]-\eta_2([\xi,n\rs\eta_1]+\eta_1(n[\xi,\rs\eta_2]-[n\xi,\rs\eta_2])
			+\eta_2([n\xi,\rs\eta_1] -n[\xi,\rs\eta_1])\,.
		\end{array}\]
		Using (\ref{com4}), we  deduce that $$\eta_1(n[\xi,\rs\eta_2]-[n\xi,\rs\eta_2])+\eta_2([n\xi,\rs\eta_1] -n[\xi,\rs\eta_1])=0\,,$$ 
		and hence (\ref{nm-r}).	
	\end{proof}
	
	Now, suppose that $r, r'\in\wedge^2 {\g}$ are two $r$-matrices, and that $r$ is non-degenerate, meaning that the map $\rs:{\g}^*\to {\mathfrak g}$ is an isomorphism. We can then consider the linear map $n: {\mathfrak g}\to {\mathfrak g}$ defined by
	$$n=\rs'\circ \rs^{-1}\quad \Longrightarrow \quad \rs'=n\circ \rs=\rs\circ {}^tn\,.$$
	
	\begin{proposition}\label{compatible}
		{  Suppose that $r$ and $r'$ satisfy}  classical Yang-Baxter equations, i.e.  $\lcf r,r\rcf=0,$ $\lcf r',r'\rcf=0$, and that  $r$ is non-degenerate. If $r$ and $r'$ are compatible, i.e. $\lcf r',r\rcf=0$, then the linear map
		$n=\rs'\circ \rs^{-1}$ is  a Nijenhuis operator on $\mathfrak g\,$. 
	\end{proposition}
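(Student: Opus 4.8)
The plan is to show directly that the Nijenhuis torsion $\lcf n,n\rcf$ of $n=\rs'\circ\rs^{-1}$ vanishes identically. First I would record that since $\lcf r,r\rcf=\lcf r',r'\rcf=0$ and, by the graded symmetry of the algebraic Schouten--Nijenhuis bracket on elements of $\wedge^2\g$, one has $\lcf r,r'\rcf=\lcf r',r\rcf$, the compatibility hypothesis gives
\[
\lcf r+r',r+r'\rcf=\lcf r,r\rcf+2\lcf r',r\rcf+\lcf r',r'\rcf=0,
\]
so $r+r'\in\wedge^2\g$ is again a solution of the classical Yang--Baxter equation. Moreover the assignment $r\mapsto[\cdot,\cdot]_r$ of (\ref{rbracket}) is linear in $r$ (it only involves $\rs$ linearly), hence $[\cdot,\cdot]_{r+r'}=[\cdot,\cdot]_r+[\cdot,\cdot]_{r'}$.

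Since $r$ is non-degenerate, $\rs\colon\g^*\to\g$ is an isomorphism, so every element of $\g$ has the form $\rs\eta$ and it suffices to evaluate $\lcf n,n\rcf$ on pairs $(\rs\eta_1,\rs\eta_2)$ with $\eta_i\in\g^*$, keeping in mind $n\rs=\rs'$ (and $\rs'=\rs\circ{}^tn$). Applying Proposition \ref{r} to $r$, to $r'$ and to $r+r'$ respectively yields
\[
\rs[\eta_1,\eta_2]_r=-[\rs\eta_1,\rs\eta_2],\qquad \rs'[\eta_1,\eta_2]_{r'}=-[\rs'\eta_1,\rs'\eta_2],
\]
\[
(\rs+\rs')\bigl([\eta_1,\eta_2]_r+[\eta_1,\eta_2]_{r'}\bigr)=-[\rs\eta_1+\rs'\eta_1,\ \rs\eta_2+\rs'\eta_2].
\]
Expanding the last relation and subtracting the two ``pure'' ones from it produces the mixed identity
\[
\rs[\eta_1,\eta_2]_{r'}+\rs'[\eta_1,\eta_2]_r=-[\rs\eta_1,\rs'\eta_2]-[\rs'\eta_1,\rs\eta_2].
\]

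Finally I would expand the torsion using (\ref{defn}) and (\ref{bracket--n}), which after setting $\xi_i=\rs\eta_i$ and $n\xi_i=\rs'\eta_i$ gives
\[
\lcf n,n\rcf(\rs\eta_1,\rs\eta_2)=n[\rs'\eta_1,\rs\eta_2]+n[\rs\eta_1,\rs'\eta_2]-n^2[\rs\eta_1,\rs\eta_2]-[\rs'\eta_1,\rs'\eta_2],
\]
and then substitute the three relations above together with $n\rs=\rs'$ (so that $n^2[\rs\eta_1,\rs\eta_2]=-n\rs'[\eta_1,\eta_2]_r$ and $n$ applied to the mixed identity produces $-\rs'[\eta_1,\eta_2]_{r'}-n\rs'[\eta_1,\eta_2]_r$). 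The four resulting terms $-\rs'[\eta_1,\eta_2]_{r'}$, $-n\rs'[\eta_1,\eta_2]_r$, $n\rs'[\eta_1,\eta_2]_r$, $\rs'[\eta_1,\eta_2]_{r'}$ cancel in pairs, so $\lcf n,n\rcf(\rs\eta_1,\rs\eta_2)=0$ and, by non-degeneracy, $\lcf n,n\rcf\equiv0$. I do not expect a genuine obstacle: the only points demanding care are the sign in the graded symmetry that makes $r+r'$ an $r$-matrix, the linearity $[\cdot,\cdot]_{r+r'}=[\cdot,\cdot]_r+[\cdot,\cdot]_{r'}$, and the bookkeeping in the final cancellation. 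An alternative route would be to derive the mixed identity straight from $\lcf r',r\rcf=0$ via the coordinate description of $\lcf\cdot,\cdot\rcf$, but channelling everything through Proposition \ref{r} and the auxiliary $r$-matrix $r+r'$ keeps the argument short.
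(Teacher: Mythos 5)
Your argument is correct, and it is a genuinely different route from the paper's. You polarize: using the graded symmetry $\lcf r,r'\rcf=\lcf r',r\rcf$ on $\wedge^2\g$ and the hypothesis $\lcf r',r\rcf=0$, you note that $r+r'$ is again an $r$-matrix, exploit the linearity of $r\mapsto[\cdot,\cdot]_r$ in (\ref{corchete}) to get $[\cdot,\cdot]_{r+r'}=[\cdot,\cdot]_r+[\cdot,\cdot]_{r'}$, and then apply Proposition \ref{r} three times (to $r$, $r'$ and $r+r'$) to extract the mixed identity $\rs[\eta_1,\eta_2]_{r'}+\rs'[\eta_1,\eta_2]_r=-[\rs\eta_1,\rs'\eta_2]-[\rs'\eta_1,\rs\eta_2]$; substituting this together with the two pure identities into $\lcf n,n\rcf(\rs\eta_1,\rs\eta_2)$ and using $n\rs=\rs'$ gives the cancellation, and non-degeneracy of $r$ lets you conclude $\lcf n,n\rcf\equiv 0$ (your sign bookkeeping checks out). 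The paper instead works dually and proves the explicit identity $\eta(\lcf n,n\rcf(\xi_1,\xi_2))=-2\,\lcf r',r\rcf({}^tn\eta,\rs^{-1}\xi_1,\rs^{-1}\xi_2)$ by computing $\eta([n\xi_1,n\xi_2])$ and $\eta(n^2[\xi_1,\xi_2])$ separately via (\ref{corchete}) and Proposition \ref{r}. What your approach buys is brevity and conceptual transparency: the auxiliary $r$-matrix $r+r'$ and linearity do all the work, with no pairing gymnastics. What the paper's computation buys is a quantitative formula expressing the Nijenhuis torsion directly in terms of the mixed Schouten bracket $\lcf r',r\rcf$, which records exactly how compatibility controls the torsion rather than only that it forces it to vanish; your method yields only the vanishing statement, which is all the proposition asks for.
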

	\begin{proof}
		We will show the following relation
		\begin{equation}\label{rn}
			\eta(\lcf n,n\rcf(\xi_1,\xi_2))=-2 \lcf r',r\rcf (^tn\eta, \rs^{-1}\xi_1, \rs^{-1}\xi_2),
		\end{equation}
		for all $\eta\in \mathfrak g^*$ and $\xi_i\in \mathfrak g\,.$ It will prove the Proposition. Indeed, since $\lcf r,r\rcf=0$, and by applying equation (\ref{corchete}) and Proposition \ref{r}, we can deduce that
		$$
		\begin{array}{rcl}
			\eta([n\xi_1,n\xi_2])&=&{ [\rs^{-1}\xi_1, \rs^{-1}\xi_2]^{r'}(\rs'\eta)}\\[5pt]
			&=&\rs^{-1}\xi_1([\rs'\eta,\rs'(\rs^{-1}\xi_2)]) -{ \rs^{-1}\xi_2([ \rs\eta,\rs(\rs^{-1}\xi_1)])}\\[5pt]
			&=& \rs^{-1}\xi_1([\rs'\eta, n\xi_2]) - \rs^{-1}\xi_2([\rs'\eta, n\xi_1])\,.
		\end{array}
		$$
		On the other hand, using that $\lcf r,r\rcf=\lcf r',r'\rcf=0$, again  (\ref{corchete}) and Proposition \ref{r}
		we have
		$$
		\begin{array}{rcl}
			\eta(n^2[\xi_1,\xi_2])&=& n[\xi_1,\xi_2](^tn\eta)={ [\rs^{-1}\xi_1, \rs^{-1}\xi_2]^{r}(\rs'(^tn\eta))}\\[5pt]
			&=& \rs^{-1}\xi_1([ \rs'(^tn\eta), \xi_2]) - \rs^{-1}\xi_2([\rs'(^tn\eta), \xi_1])\,.
		\end{array}
		$$
		Finally, 
		$$\begin{array}{rcl}
			2 \lcf r',r\rcf ((^tn\eta), \rs^{-1}\xi_1, \rs^{-1}\xi_2)&=&^tn\eta([n\xi_1,\xi_2]) + {}^tn\eta([\xi_1,n\xi_2]) + \rs^{-1}\xi_2([\rs'^tn\eta, \xi_1])
			\\[5pt]&+&{ \rs^{-1}\xi_2([\rs'\eta, n\xi_1])} + \rs^{-1}\xi_1([n\xi_2,\rs'\eta] + \rs^{-1}\xi_1([\xi_2,\rs'{}^tn\eta])\,.
		\end{array}
		$$
		From (\ref{bracket--n}), we conclude (\ref{rn}).
	\end{proof}
	
	\noindent Under the conditions of Proposition \ref{compatible},  the $1$-cocycles $\delta_r$ and $\delta_{r'}$ associated with the coboundary Lie bialgebroids $(\mathfrak g, [\cdot,\cdot],\delta_r)$ and 
	$(\mathfrak g, [\cdot,\cdot],\delta_{r'})$, respectively, are related as follows:
    
	\noindent First, by definition
	\[
	\delta_{r'}(\xi)(\eta_1,\eta_2)=\lcf \xi,r'\rcf(\eta_1,\eta_2)= \eta_1([\xi,\rs'\eta_2])-\eta_2([\xi,\rs'\eta_1])\,,
	\]
	for all $\xi \in {\mathfrak g}$ and $\eta_1,\eta_2\in {\mathfrak g}^*$. On the other hand, 
	\[
	\begin{array}{rcl}
		&&\delta_{r}(\xi)({}^tn\eta_1,\eta_2)+\delta_{r}(\xi)(\eta_1,{}^tn\eta_2)-\delta_{r}(n\xi)(\eta_1,\eta_2)=\\[5pt]
		&&\lcf \xi,r\rcf ({}^tn\eta_1,\eta_2)+ \lcf \xi,r\rcf(\eta_1,{}^tn\eta_2)-\lcf n\xi,r\rcf(\eta_1,\eta_2)=\\[5pt]
		&& {}^tn\eta_1([\xi,\rs\eta_2])-\eta_2([\xi,\rs {}^tn\eta_1])+\eta_1([\xi,\rs {}^tn\eta_2])-{}^tn\eta_2([\xi,\rs\eta_1]))-\eta_1([n\xi,\rs\eta_2])+\eta_2([n\xi,\rs\eta_1]\,.
	\end{array}\]	
	Using $\rs'=n\circ \rs=\rs\circ {}^tn$, it reads 
	\[
	\begin{array}{rcl}
		&&\delta_{r}(\xi)(^tn\eta_1,\eta_2)+\delta_{r}(\xi)(\eta_1,^tn\eta_2)-\delta_{r}(n\xi)(\eta_1,\eta_2)=\\[5pt]
		&&\eta_1[\xi,\rs'\eta_2]-\eta_2([\xi,\rs'\eta_1]+\eta_1(n[\xi,\rs\eta_2]-[n\xi,\rs\eta_2])
		+\eta_2([n\xi,\rs\eta_1] -n[\xi,\rs\eta_1])\,.
	\end{array}\]
	Using the following equalities 
	$$\lcf r',r\rcf(\eta_1,\eta_2,\rs^{-1}\xi)=\lcf r,r\rcf(^tn\eta_1,\eta_2,\rs^{-1}\xi)=\lcf r,r\rcf(\eta_1,^tn\eta_2,\rs^{-1}\xi)=0\,,$$
	we deduce that 
	$$\eta_1(n[\xi,\rs\eta_2]-[n\xi,\rs\eta_2])+\eta_2([n\xi,\rs\eta_1] -n[\xi,\rs\eta_1])=0\,.$$
	
	\noindent Note that the latter equality is just the concomitant (\ref{com4}). Therefore, 
	\[
	\delta_{r'}(\xi)
	(\eta_1,\eta_2)=\delta_{r}(\xi)(^tn\eta_1,\eta_2)+\delta_{r}(\xi)(\eta_1,^tn\eta_2)-\delta_r(n\xi) (\eta_1,\eta_2).
	\] 
	\begin{corollary}
Let $r$ and $r'$ be elements of $\wedge^2 {\mathfrak g}$, both serving as $r$-matrices on the Lie algebra $({\mathfrak g},[\cdot,\cdot])$, where $r$ is non-degenerate. The coboundary Lie bialgebra $({\mathfrak g},[\cdot,\cdot],\delta_r)$ defined by $r$, together with the linear map $n:=r'\circ r^{-1}$, defines an almost NL bialgebra $({\mathfrak g},[\cdot,\cdot],\delta_r,n)$.
	\end{corollary}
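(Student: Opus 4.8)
The plan is to obtain the corollary by stitching together the results already proved in this section, so that essentially no new computation is required. First I would record the transpose bookkeeping: since $r,r'\in\wedge^2\g$, the sharp maps are skew-adjoint, ${}^t\rs=-\rs$ and ${}^t\rs'=-\rs'$, so $n:=\rs'\circ\rs^{-1}$ satisfies $n\circ\rs=\rs'$ and, taking transposes, $\rs\circ{}^tn=\rs'$ as well. In particular hypothesis~(i) of Proposition~\ref{nr}, namely $n\circ\rs=\rs\circ{}^tn$, holds automatically; it also makes $nr$ skew-symmetric, i.e.\ $nr\in\wedge^2\g$ (in fact $(nr)^\sharp=n\circ\rs=\rs'$, so $nr=r'$).

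Next I would invoke Proposition~\ref{compatible}: with $r$ non-degenerate and $r,r'$ compatible $r$-matrices --- that is $\lcf r,r\rcf=\lcf r',r'\rcf=\lcf r',r\rcf=0$, the standing hypothesis carried over from the discussion preceding the corollary --- identity~(\ref{rn}) forces $\lcf n,n\rcf\equiv 0$, so $n$ is a Nijenhuis operator on $(\g,[\cdot,\cdot])$. It then remains only to verify hypothesis~(ii) of Proposition~\ref{nr}, i.e.\ $C(r,n)\equiv 0$. For this I would start from the explicit form~(\ref{com4}) of the concomitant, rewrite the bracketed differences $n[\xi,\rs\eta_i]-[n\xi,\rs\eta_i]$ using $\rs\circ{}^tn=n\circ\rs$, and --- exactly as in the computation just above the corollary --- recognise the result as a combination of components of $\lcf r',r\rcf$ and of $\lcf r,r\rcf$ evaluated at arguments of the form $({}^tn\eta_i,\eta_j,\rs^{-1}\xi)$; all of these vanish by $\lcf r,r\rcf=0$ together with the compatibility $\lcf r',r\rcf=0$. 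Hence $C(r,n)\equiv 0$.

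With both hypotheses of Proposition~\ref{nr} established and $n$ Nijenhuis, Proposition~\ref{nrr} applies and yields precisely that $(\g,[\cdot,\cdot],\delta_r,n)$ is an almost NL bialgebra; here the deformed dual bracket is $[\cdot,\cdot]^{{}^tn}_{\g^*}=[\cdot,\cdot]_{nr}=[\cdot,\cdot]_{r'}$, which is a Lie bracket because, by Corollary~\ref{conn}, $nr$ is again a solution of the classical Yang--Baxter equation (indeed $nr=r'$).

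I expect the only mildly delicate point to be the identification of $C(r,n)$ with a sum of components of $\lcf r',r\rcf$ and $\lcf r,r\rcf$: this is where the compatibility of the two $r$-matrices is genuinely used, and it is the reason the corollary should be read under the hypothesis $\lcf r',r\rcf=0$ of Proposition~\ref{compatible}. Everything else is transpose bookkeeping and a direct appeal to the earlier propositions.
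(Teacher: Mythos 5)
Your proposal is correct and follows essentially the same route the paper intends: use $n\circ\rs=\rs\circ{}^t n$ (from skew-symmetry of $r$, so $nr=r'$), Proposition \ref{compatible} for the Nijenhuis property, the computation preceding the corollary (vanishing of the relevant components of $\lcf r',r\rcf$ and $\lcf r,r\rcf$, i.e.\ $C(r,n)=0$) for hypothesis (ii) of Proposition \ref{nr}, and then Proposition \ref{nrr}. Your reading that the compatibility $\lcf r',r\rcf=0$ is a standing hypothesis carried over from Proposition \ref{compatible} matches the paper's context, where the corollary is stated under exactly that discussion.
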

	
	\noindent Note that in the next section, we will demonstrate that coboundary almost NL bialgebras are, in fact, coboundary NL bialgebras.
	
	\section{The hierarchy of structures on an NL bialgebra}\label{sec5}

\noindent	Given an NL bialgebra $({\mathfrak g},[\cdot,\cdot],\delta,n)$, in this section we show that, in a certain condition, one can derive a hierarchy of Lie bialgebras which are pairwise compatible in the sense of
	Proposition\ref{Lie}: any linear combination of the corresponding Lie brackets defines a new
	Lie bracket. To achieve this, we will discuss the following hierarchies utilizing an (almost) Nijenhuis operator:
	\begin{itemize}
		\item The hierarchy of deformed (Lie) brackets on a Lie algebra,
		\item The hierarchy of adjoint representations of deformed Lie brackets,  
		\item The hierarchy of deformed $1$-cochains of the $1$-cocycle $\delta$, encompassing all three types of deformation.
	\end{itemize}
\subsection{The hierarchy of deformed brackets and adjoint representations on a Lie algebra} 	
Given a Nijenhuis operator $n:\g\to \g$ on a Lie algebra $(\g,[\cdot,\cdot])$ (and correspondingly, $^tn:\g^*\to\g^*$ on its dual $(\g^*,[\cdot,\cdot]_{\g^*})$), there exist a hierarchy of deformed Lie brackets on both $\g$ and $\g^*$ which are compatible in pairs \cite{KoMa}. This leads to a hierarchy of adjoint representations on $\g$ for the iterated Lie brackets.  

 Note that, throughout this section, for the sake of simplicity, we denote the deformed brackets on $(\g,[\cdot,\cdot])$ iterated by any power $n^i$, for $(i>1)$ as $[\cdot,\cdot]_{i}$ and the corresponding deformed brackets on $\g^*$ iterated by $({}^tn)^i$ as $[\cdot,\cdot]^{i}$.

\begin{definition} (\cite{KoMa})
Suppose $n$ be a Nijenhuis operator on $(\g,[\cdot,\cdot])$. The Nijenhuis torsion of $n$ with respect to the deformed Lie bracket $[\cdot,\cdot]_{i}$ is given by
\[
\lcf n,n\rcf_{i} (\xi_1,\xi_2)=\lcf n,n\rcf_{{i-1}} (n\xi_1,\xi_2)+\lcf n,n\rcf_{{i-1}} (\xi_1,n\xi_2)-n\lcf n,n\rcf_{{i-1}} (\xi_1,\xi_2)\,.
\]
\end{definition}
	
	\begin{remark} (\cite{KoMa})
		If  $\lcf n,n\rcf=0$, then for every integer $i$:  
	\begin{itemize}
		\item 	The vanishing of Nijenhuis torsion of $n$ is preserved in a hierarchy, i.e. $\lcf n,n\rcf_{i}=0$.
		\item The Nijenhuis torsion of $n^k$ with respect to the Lie algebra structure $(\g,[\cdot,\cdot]_i)$ vanishes, i.e. $\lcf n^{k},n^{k}\rcf_{i}=0\,.$
	\end{itemize}	
	
	\end{remark} 

  	More generally, we have:
	\begin{remark}\label{remn} (\cite{KoMa}, Lemma 5.1)
		Let $\lcf n,n\rcf=0$ and $\lcf {}^tn,{}^tn\rcf=0$. For each integer $k \geqq 0$, and for each integer $i \geqq 0$, $\xi_1,\xi_2\in \mathfrak g$ and $\eta_1,\eta_2\in \mathfrak g^*$, 
		\begin{equation}\label{hinnt}
			\begin{array}{rcl}
				n^{i}[\xi_1, \xi_2]_{{k+i}}&=&\left[n^{i} \xi_1, n^{i} \xi_2\right]_{k}, \\[5pt]
				(^tn)^{i}[\eta_1, \eta_2]^{{k+i}}&=&\left[({}^tn)^{i} \eta_1, ({}^tn)^{i} \eta_2\right]^{k}\,.
			\end{array}
		\end{equation}
		\noindent 
	\end{remark}
\noindent When $n$ is a Nijenhuis operator, we obtain the following recurrence relation among the deformed brackets:
	\begin{equation}\label{nk}
		\begin{array}{rcl}	[\xi_1,\xi_2]_{n^i}&=&[n^i\xi_1,\xi_2]+[\xi_1,n^i\xi_2]-n^i[\xi_1,\xi_2]\\&=&[n^{i-1}\xi_1,\xi_2]_n+[\xi_1,n^{i-1}\xi_2]_n-n^{i-1}[\xi_1,\xi_2]_n\,, 
		\end{array} 
	\end{equation}
	\noindent and 	\begin{equation}\label{nk*}
		\begin{array}{rcl}
			[\eta_1,\eta_2]^{(^tn)^j} &=& [(^tn)^j\eta_1,\eta_2]_{\g^*} +[\eta_1, (^tn)^j\eta_2]_{\g^*}- (^tn)^j[\eta_1,\eta_2]_{\g^*}\\[3pt]
			&=& [(^tn)^{j-1}\eta_1,\eta_2]^{^tn} +[\eta_1, (^tn)^{j-1}\eta_2]^{^tn}-(^tn)^{j-1}[\eta_1,\eta_2]^{^tn}\,.
		\end{array}
	\end{equation}
	\noindent For instance, in the case $i=1$ and $k=1$, we have
	\begin{equation}\label{adn2}
		[\xi_1,\xi_2]_{n^2}=[n^2\xi_1,\xi_2]+[\xi_1,n^2\xi_2]-n^2[\xi_1,\xi_2]=[n\xi_1,\xi_2]_n+[\xi_1,n\xi_2]_n-n[\xi_1,\xi_2]_n\,.
	\end{equation}
	\begin{remark}\label{weaknn}
		If $n$ is an almost Nijenhuis structure on a Lie algebra $(\g,[\cdot,\cdot])$, there is no obvious recurrence relation between deformed brackets, i.e.
		\[
		\begin{array}{rcl}	[\xi_1,\xi_2]_{n^i}&=&[n^i\xi_1,\xi_2]+[\xi_1,n^i\xi_2]-n^i[\xi_1,\xi_2]\\&\neq&[n^{i-1}\xi_1,\xi_2]_n+[\xi_1,n^{i-1}\xi_2]_n-n^{i-1}[\xi_1,\xi_2]_n\,, 
		\end{array} 
		\]
		\[
		\begin{array}{rcl}
			[\eta_1,\eta_2]^{(^tn)^j} &=& [(^tn)^j\eta_1,\eta_2]^* +[\eta_1, (^tn)^j\eta_2]^*- (^tn)^j[\eta_1,\eta_2]^*\\[3pt]
			&\neq& [(^tn)^{j-1}\eta_1,\eta_2]^{^tn} +[\eta_1, (^tn)^{j-1}\eta_2]^{^tn}-(^tn)^{j-1}[\eta_1,\eta_2]^{^tn}\,.
		\end{array}
		\]
	\end{remark}
	\noindent Note that we can consider a linear map $n^k:\g\to \g$ as a $\g$-valued $1$-form on the Lie algebra $\g$. The coboundary of this operator with respect to the initial Lie bracket is then given by:
	\[
	\partial n^k(\xi_1,\xi_2)=[\xi_1,\xi_2]_{n^k}\,.
	\]
	\begin{corollary}\label{weakn}
		\noindent The Nijenhuis torsion 
		$$\lcf n^k,n^k\rcf (\xi_1,\xi_2)=n^k[\xi_1,\xi_2]_{n^k}-[n^k\xi_1,n^k\xi_2]\,,$$
		can be expressed in terms of the Nijenhuis torsion of previous orders,  with respect to the initial Lie bracket $[\cdot,\cdot]$, as follows 
		\[
		\begin{array}{rcl}
			\lcf n^k,n^k\rcf (\xi_1,\xi_2)&=&n^{k-1}\lcf n,n\rcf[ (n^{k-1}\xi_1,\xi_2)+(\xi_1,n^{k-1}\xi_2)]+\lcf n^{k-1},n^{k-1}\rcf (n\xi_1,n\xi_2)\\[5pt]
			&+&n^2\lcf n^{k-1},n^{k-1}\rcf(\xi_1,\xi_2)-n^2\lcf n^{k-2},n^{k-2}\rcf(n\xi_1,n\xi_2)\,.
		\end{array}
		\]
        	\end{corollary}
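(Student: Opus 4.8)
The plan is to establish the identity for $\lcf n^k, n^k\rcf$ by induction on $k$, using the recurrence relation (\ref{nk}) for the deformed brackets together with the defining expressions for the Nijenhuis torsion. Since $n$ is Nijenhuis, we have $\lcf n,n\rcf = 0$, but the statement is written in a form that keeps $\lcf n,n\rcf$ and $\lcf n^{j},n^{j}\rcf$ explicit; so the real content is an algebraic identity relating the torsion of $n^k$ to the torsions of $n$, $n^{k-1}$, and $n^{k-2}$, valid regardless of whether these vanish. I would therefore treat $n$ as an arbitrary linear operator and prove the identity purely formally.

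\textbf{Key steps.} First I would expand the left-hand side using the definition $\lcf n^k,n^k\rcf(\xi_1,\xi_2) = n^k[\xi_1,\xi_2]_{n^k} - [n^k\xi_1, n^k\xi_2]$ together with (\ref{defn}) applied to $n^k$, writing $[\xi_1,\xi_2]_{n^k} = [n^k\xi_1,\xi_2] + [\xi_1, n^k\xi_2] - n^k[\xi_1,\xi_2]$. Second, I would insert $n^k = n^{k-1}\circ n = n \circ n^{k-1}$ and also $n^k = n^{k-2} \circ n^2$ in various places, so as to re-express each term as a torsion bracket of lower order plus a correction; concretely, $n^k[\xi_1,\xi_2]_{n^k}$ and the bilinear pieces $[n^k\xi_1, n^k\xi_2]$, $[n^{k-1}\xi_1, n^{k-1}\xi_2]$ get regrouped using the basic identity $[n^a\xi_1, n^a\xi_2] = n^a[\xi_1,\xi_2]_{n^a} - \lcf n^a,n^a\rcf(\xi_1,\xi_2)$. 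Third, I would collect terms: the pieces with an outer $n^{k-1}$ acting on $\lcf n,n\rcf$ evaluated at $(n^{k-1}\xi_1,\xi_2)$ and $(\xi_1, n^{k-1}\xi_2)$ should combine into the first summand on the right; the pieces involving $\lcf n^{k-1},n^{k-1}\rcf$ at $(n\xi_1,n\xi_2)$ and $n^2\lcf n^{k-1},n^{k-1}\rcf(\xi_1,\xi_2)$ into the next two; and the $n^2\lcf n^{k-2},n^{k-2}\rcf(n\xi_1,n\xi_2)$ term as the required correction coming from $[n^{k-1}\xi_1, n^{k-1}\xi_2] = [n\cdot n^{k-2}\xi_1, n\cdot n^{k-2}\xi_2]$ re-expanded at level $k-2$. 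Finally I would check that all remaining terms (the pure Jacobiator-free brackets of the form $[n^a\xi_1, n^b\xi_2]$ with $a+b$ fixed) cancel in pairs, which is the bookkeeping heart of the argument.

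\textbf{Main obstacle.} The difficulty is entirely combinatorial: keeping track of which power of $n$ sits outside versus inside each bracket, and making sure the mixed terms $[n^a\xi_1, n^b\xi_2]$ with $a \ne b$ cancel exactly. There is no conceptual subtlety — Jacobi is never used since only bilinear (not trilinear) expressions appear — but the recurrence (\ref{nk}) must be applied at two different ``depths'' ($k-1$ and $k-2$) simultaneously, and a naive single-step expansion will not close; one has to expand $n^k = n \circ n^{k-1}$ in one group of terms and $n^k = n^2 \circ n^{k-2}$ in another. I would organize the computation by first writing everything over the initial bracket $[\cdot,\cdot]$ (eliminating all deformed brackets via (\ref{defn})), so that both sides become $\R$-linear combinations of terms $n^p[n^q\xi_1, n^r\xi_2]$, and then simply match coefficients; this reduces the proof to verifying a finite identity among formal symbols, which is routine once set up correctly. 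An alternative, slightly cleaner route would be to prove it by induction using Remark \ref{remn} (the identity $n^i[\xi_1,\xi_2]_{k+i} = [n^i\xi_1, n^i\xi_2]_k$), but since that remark assumes $\lcf n,n\rcf = 0$ whereas the corollary's formula is stated to hold more generally, I would favor the direct formal expansion.
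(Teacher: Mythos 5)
Your plan is correct and matches what the paper implicitly intends: the corollary is stated without proof as a routine computation, and your direct strategy of eliminating the deformed brackets via (\ref{defn}), writing both sides as combinations of terms $n^p[n^q\xi_1,n^r\xi_2]$, and matching coefficients does verify the identity for an arbitrary linear operator $n$ (all mixed terms cancel in pairs, and no Jacobi identity or Nijenhuis hypothesis is needed). The only superfluous element is the opening mention of induction on $k$: since the identity relates $\lcf n^k,n^k\rcf$ to $\lcf n^{k-1},n^{k-1}\rcf$, $\lcf n^{k-2},n^{k-2}\rcf$ and $\lcf n,n\rcf$ at a single level, the formal expansion alone suffices.
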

\noindent		For example, when $k=2$, the relation reads:
		\[
		\lcf n^2,n^2\rcf (\xi_1,\xi_2)=n\lcf n,n\rcf[ (n\xi_1,\xi_2)+(\xi_1,n\xi_2)]+\lcf n,n\rcf (n\xi_1,n\xi_2)+n^2\lcf n,n\rcf (\xi_1,\xi_2)\,.\
		\]
	\begin{definition}
		  The Nijenhuis concomitant $[n^i,n^j]$ (\cite{Ni},\cite{RaReHa}) of two Nijenhuis operators $n$ and $n' $ on a Lie algebra $(\mathfrak g,[\cdot,\cdot])$ is defined by
			\[	\begin{array}{rcl}	\lcf n,n' \rcf(\xi_1,\xi_2)& =& 	[ n\xi_1,n'\xi_2] +[n'\xi_1,n\xi_2] - n[n'\xi_1, \xi_2] -n'[n\xi_1, \xi_2]\\[4pt]
				&-&   n[\xi_1,n'\xi_2] - n'[\xi_1,n\xi_2] + n\circ n'[\xi_1,\xi_2] + n'\circ n[\xi_1,\xi_2]\,,	\end{array}\]
		  for every pair of basis elements 
		  $\xi_1,\xi_2$ of $\mathfrak g\,$.
		\begin{comment}
		Two Nijenhuis structures are considered compatible if their Nijenhuis concomitant vanishes. When $\lcf n,n \rcf\equiv 0$, there exists a compatible hierarchy of Nijenhuis structures $n^k$ for all $k>1$. Consequently, the relation $\lcf n^i,n^j \rcf \equiv 0$ holds for any integers $i,j$.
		\end{comment}

	\end{definition}
	Using Remark \ref{remn}, we deduce the following.
	\begin{corollary}
Suppose $n$ is a Nijenhuis operator on a Lie algebra $(\g,[\cdot,\cdot])$. We have the following hierarchy of Lie brackets and hierarchy of adjoint representations $\ad^{n^i}_{\xi}:\g\to\g$, $i \geqq 0$ and $\xi\in\g$: 
		\[\begin{array}{rclrclrcl}
			&(\g,[\cdot,\cdot])&  \, &\rightsquigarrow&\, &\ad_{\xi}&\\[4pt]
			
			&(\g,[\cdot,\cdot]_n)&  \, &\rightsquigarrow&\, &\ad^n_{\xi}=[\ad_{\xi},n]+\ad_{n\xi}&\\[4pt]
			
			&(\g,[\cdot,\cdot]_{n^2})&  \, &\rightsquigarrow&\, &\ad^{n^2}_{\xi}=[\adn_{\xi},n]+\adn_{n\xi}&\\[4pt]
			&\quad\vdots& && &\vdots& \\[4pt]
			&(\g,[\cdot,\cdot]_{n^i}) & \, &\rightsquigarrow&\, &\ad^{n^i}_{\xi}=[\ad^{n^{i-1}}_{\xi},n]+\ad^{n^{i-1}}_{n\xi}&	\end{array}	\]
	\end{corollary}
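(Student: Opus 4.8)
The plan is to proceed by induction on $i$, exploiting the recurrence relation \eqref{nk} for the deformed brackets together with the formula \eqref{ad-n} that expresses the adjoint representation of a once-deformed bracket in terms of the original one. First I would set up the base cases: for $i=0$ the bracket is $[\cdot,\cdot]$ with adjoint representation $\ad_\xi$, and for $i=1$ the statement is precisely the definition \eqref{ad-n}, namely $\ad^n_\xi = [\ad_\xi,n] + \ad_{n\xi}$. These are already recorded in the excerpt, so no work is needed there.

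For the inductive step, the key observation is that, since $n$ is Nijenhuis, by \eqref{nk} the bracket $[\cdot,\cdot]_{n^i}$ is obtained from $[\cdot,\cdot]_{n^{i-1}}$ by the \emph{same} deformation procedure applied with the operator $n$ — that is, $[\xi_1,\xi_2]_{n^i} = [n\xi_1,\xi_2]_{n^{i-1}} + [\xi_1,n\xi_2]_{n^{i-1}} - n[\xi_1,\xi_2]_{n^{i-1}}$, which is exactly $([\cdot,\cdot]_{n^{i-1}})_n$. Applying the formula \eqref{ad-n} — which is valid for any Lie bracket and its $n$-deformation, here with $(\g,[\cdot,\cdot]_{n^{i-1}})$ in place of $(\g,[\cdot,\cdot])$ — we obtain that the adjoint representation of $[\cdot,\cdot]_{n^i}$ is $[\ad^{n^{i-1}}_\xi, n] + \ad^{n^{i-1}}_{n\xi}$, where $\ad^{n^{i-1}}$ denotes the adjoint representation of $[\cdot,\cdot]_{n^{i-1}}$, which by the inductive hypothesis is the operator in the $(i-1)$-th row of the table. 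This closes the induction and simultaneously produces the recursion displayed in the last line of the statement.

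The remaining point is that each of these brackets is genuinely a Lie bracket and each $\ad^{n^i}$ is genuinely a representation of it, so that the diagram $(\g,[\cdot,\cdot]_{n^i}) \rightsquigarrow \ad^{n^i}_\xi$ makes sense. This follows from Remark \ref{remn} (or the preceding remarks from \cite{KoMa}): when $\lcf n,n\rcf = 0$, the Nijenhuis torsion of $n$ with respect to $[\cdot,\cdot]_{n^{i-1}}$ still vanishes, so Proposition \ref{Lie} applies at every stage and $[\cdot,\cdot]_{n^i}$ is a Lie bracket; compatibility in pairs is likewise part of that circle of results. One should also note that the hierarchy of Lie brackets on $\g$ listed on the left of the table is just the standard Magri--Morosi hierarchy recalled at the start of the section, so only the right-hand column — the adjoint representations and their recursion — requires the argument above.

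The main obstacle, such as it is, is bookkeeping rather than conceptual difficulty: one must be careful that the formula \eqref{ad-n} is being applied to the correct Lie algebra at each step — namely $(\g,[\cdot,\cdot]_{n^{i-1}})$ rather than $(\g,[\cdot,\cdot])$ — and that the commutator $[\ad^{n^{i-1}}_\xi, n]$ appearing in the recursion is the commutator of endomorphisms of $\g$, exactly as in the definition of $\underline{\ad}$. Since $n$ is assumed Nijenhuis throughout this corollary, the crucial recurrence \eqref{nk} is available (contrast Remark \ref{weaknn} in the almost Nijenhuis case), and the whole argument goes through cleanly.
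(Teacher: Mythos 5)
Your proposal is correct and is essentially the argument the paper intends: the corollary is deduced from the Kosmann-Schwarzbach--Magri hierarchy facts recorded in Remark \ref{remn} together with the recurrence \eqref{nk}, which is exactly the induction you spell out, applying \eqref{ad-n} to each deformed Lie algebra $(\g,[\cdot,\cdot]_{n^{i-1}})$ in turn. One small bookkeeping point: \eqref{nk} literally writes $[\cdot,\cdot]_{n^i}$ as the $n^{i-1}$-deformation of $[\cdot,\cdot]_n$, whereas your inductive step uses the $n$-deformation of $[\cdot,\cdot]_{n^{i-1}}$; the two expressions coincide because the Nijenhuis concomitant $\lcf n,n^{i-1}\rcf$ vanishes whenever $\lcf n,n\rcf=0$, which belongs to the same circle of results from \cite{KoMa} that you already invoke, so this is a matter of citation precision rather than a gap.
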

	\noindent For example for $i=2$
	\[\begin{array}{rcl}
		\ad^{n^2}_{\xi}&=&[[\ad_{\xi},n],n]+2[\ad_{n\xi},n]+\ad_{n^2\xi}\,,\\[3pt]
		(\ad^{n^2}_{\xi})^*&=&[^tn,[^tn,\ad^*_{\xi}]]+2[^tn,\ad^*_{n\xi}]+\ad^*_{n^2\xi}\,.
	\end{array}\]
Using $\ads_{\,n\xi}=n\circ\ads_{\,\xi}$ (see Lemma \ref{lemma.rel1}), it can be rewritten as
	\[\begin{array}{rcl}\ad^{n^2}_{\xi}&=&[ad_{\xi},n]n+[\ad_{n\xi},n]+\ad_{n^2\xi}\,, \\[3pt]
		(\ad^{n^2}_{\xi})^*&=&^tn[^tn,ad^*_{\xi}]+[^tn,\ad^*_{n\xi}]+\ad^*_{n^2\xi}\,.
	\end{array}\]
	\begin{remark}
    For a Nijenhuis operator $n$ on a Lie algebra $(\mathfrak{g},[\cdot,\cdot])$, from equation (\ref{nk}), $\ad^{n^i}_{\xi}$ can be expressed in terms of $\adn_{\xi}$ for any integer $i$ as follows:
		\begin{equation}\label{hierad}
		\ad^{n^i}_{\xi}=[\adn_{\xi},n^{i-1}]+\adn_{n^{i-1}\xi}\,.
		\end{equation}
	\end{remark}
	\begin{remark}
		If  $\lcf n,n\rcf=0$ and $\lcf {}^tn,{}^tn\rcf=0$, then for any $i,j$
		$$C(n^i,\ad^{n^i})\equiv 0 \quad \&\quad C((^tn)^j,(\ad^{n^j})^*)\equiv 0\,.$$
	\end{remark}
	
\subsection{The hierarchy of deformed one-cochains on a Lie algebra}	Now, we consider the hierarchy  of deformed $1$-cochains in the cohomology defined by $\ad^{(2)}:\g \times \wedge^2 \g \to \wedge^2 \g$ on the Lie algebra $(\g,[\cdot,\cdot])$. The deformation of a $1$-cocycle $\delta$ by means of $(^tn)^k$ is represented by a linear map $\delta_{n^k}:{\mathfrak g}\to \wedge^2{\mathfrak g}$, given by: 
	\begin{equation}\label{Dni}
		\delta_{(n^{t})^k}(\xi)=\iota_{{(^tn)}^k}\delta(\xi)-\delta(n^k\xi)\,, \mbox{for}\quad k\in \mathbb N\,.\end{equation}

	\begin{lemma}
		The hierarchy of iterated deformed $1$-cochains (\ref{Dni}) by means of $(^tn)^k$, is given by 
		\begin{equation}\label{Dnii}
			\delta_{(^tn)^k}(\xi)=\iota_{^tn}\delta_{(^tn)^{k-1}}(\xi)-\delta_{(^tn)^{k-1}}(n\xi)\,.\end{equation} 
	\end{lemma}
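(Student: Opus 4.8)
The plan is to verify the recurrence directly from the defining formula \eqref{Dni}, substituting the index $k-1$ in place of $k$ inside the right-hand side of \eqref{Dnii} and checking that the two expressions agree. First I would expand the right-hand side of \eqref{Dnii} using \eqref{Dni} with exponent $k-1$:
\[
\iota_{^tn}\delta_{(^tn)^{k-1}}(\xi)-\delta_{(^tn)^{k-1}}(n\xi)
= \iota_{^tn}\bigl(\iota_{(^tn)^{k-1}}\delta(\xi)-\delta(n^{k-1}\xi)\bigr)
- \bigl(\iota_{(^tn)^{k-1}}\delta(n\xi)-\delta(n^{k-1}(n\xi))\bigr).
\]
Distributing $\iota_{^tn}$ over the first parenthesis and simplifying $n^{k-1}(n\xi)=n^k\xi$, this becomes
\[
\iota_{^tn}\iota_{(^tn)^{k-1}}\delta(\xi)-\iota_{^tn}\delta(n^{k-1}\xi)
-\iota_{(^tn)^{k-1}}\delta(n\xi)+\delta(n^k\xi).
\]
On the other hand, the left-hand side of \eqref{Dnii} is, by \eqref{Dni}, $\iota_{(^tn)^k}\delta(\xi)-\delta(n^k\xi)$. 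So the identity \eqref{Dnii} is equivalent to
\[
\iota_{(^tn)^k}\delta(\xi)
=\iota_{^tn}\iota_{(^tn)^{k-1}}\delta(\xi)-\iota_{^tn}\delta(n^{k-1}\xi)-\iota_{(^tn)^{k-1}}\delta(n\xi)+2\delta(n^k\xi),
\]
after moving the $\delta(n^k\xi)$ terms to one side; I would then need to reconcile the composition $\iota_{(^tn)^k}$ with the iterated application $\iota_{^tn}\iota_{(^tn)^{k-1}}$.

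The key technical input is the formula \eqref{phiii} for $\iota_{\phi_1\phi_2}$, applied with $\phi_1={}^tn$ and $\phi_2=({}^tn)^{k-1}$ (or vice versa): it tells us that $\iota_{(^tn)^k}P = \iota_{(^tn)^{k-1}}\iota_{^tn}P$ differs from $\iota_{^tn}\iota_{(^tn)^{k-1}}P$ by a correction term involving the ``mixed'' substitutions $P(\dots,{}^tn\eta_i,\dots,({}^tn)^{k-1}\eta_j,\dots)$. I would insert this correction into the computation above and check that the cross terms cancel exactly against the terms $\iota_{^tn}\delta(n^{k-1}\xi)$ and $\iota_{(^tn)^{k-1}}\delta(n\xi)$, which, when $\delta$ is being regarded as part of a Lie bialgebra structure, can be re-expressed via the cocycle condition and the relation $\ads^*_{\,n\xi}=\ads^*_{\,\xi}\circ{}^tn$ of Lemma \ref{lemma.rel1} together with the compatibility $n\circ\rs=\rs\circ{}^tn$-type identities already used in Proposition \ref{prop-equiv}. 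The cleanest route, however, is likely to avoid the bialgebra machinery altogether and argue purely at the level of the operators $\iota$: since $\delta(\xi)\in\wedge^2\g$, the only values of $k$ that contribute are handled by the $k=2$ case of \eqref{phiii}, where the mixed term is a single symmetric expression $\sum_{i\neq j}\delta(\xi)(\dots,{}^tn\eta_i,\dots,({}^tn)^{k-1}\eta_j,\dots)$.

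The main obstacle I anticipate is bookkeeping the mixed substitution terms coming from \eqref{phiii} and confirming they vanish or telescope correctly; this is where a sign error or a miscounted index is easiest to make. Concretely, one must be careful that $\iota$ is an \emph{anti}-automorphism (Lemma with \eqref{phii}), so $\iota_{(^tn)^k}\neq(\iota_{^tn})^k$ in general, and the recurrence \eqref{Dnii} is precisely the statement that the deformation operator $\delta\mapsto\delta_{^tn}$ composed $k$ times reproduces the $k$-th deformation — a compatibility that is not automatic and requires the interplay between $\iota$ and the pullback $\delta\circ n^k$. Once the mixed terms are shown to cancel (using that on $\wedge^2\g$ only pairs $(\eta_1,\eta_2)$ occur, so \eqref{phiii} contributes a controlled correction), the identity \eqref{Dnii} follows by a straightforward induction on $k$, with base case $k=1$ being the original definition of $\delta_{^tn}=\iota_{^tn}\delta-\delta\circ n$ from \eqref{Dn}.
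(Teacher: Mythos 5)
Your reduction is correct as far as it goes, and it coincides with the paper's starting point: expanding both sides of \eqref{Dnii} via \eqref{Dni} and applying \eqref{phiii} with $\phi_1=({}^tn)^{k-1}$, $\phi_2={}^tn$, the lemma becomes the identity
\[
\bigl(\iota_{{}^tn}\delta(n^{k-1}\xi)+\iota_{({}^tn)^{k-1}}\delta(n\xi)-2\,\delta(n^{k}\xi)\bigr)(\eta_1,\eta_2)
=\delta(\xi)\bigl(({}^tn)^{k-1}\eta_1,{}^tn\eta_2\bigr)+\delta(\xi)\bigl({}^tn\eta_1,({}^tn)^{k-1}\eta_2\bigr),
\]
and this is exactly where your proposal stops being a proof. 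Rewriting everything through $\delta(\zeta)(\eta_1,\eta_2)=[\eta_1,\eta_2]_{\g^*}(\zeta)$, the displayed identity is equivalent to the vanishing of the Nijenhuis concomitant of ${}^tn$ and $({}^tn)^{k-1}$ with respect to the dual bracket $[\cdot,\cdot]_{\g^*}$; for $k=2$ it is literally ${}^tn[\eta_1,\eta_2]^{{}^tn}=[{}^tn\eta_1,{}^tn\eta_2]_{\g^*}$, i.e.\ $\lcf{}^tn,{}^tn\rcf\equiv 0$ on $(\g^*,[\cdot,\cdot]_{\g^*})$. Hence the cancellation of the mixed terms is \emph{not} a formal property of the operators $\iota$: for a generic linear map $n$ and a generic cobracket $\delta$ it fails, so your suggested ``cleanest route'' of arguing purely at the level of $\iota$ and avoiding the bialgebra structure cannot work. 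The alternative ingredients you gesture at are also not the right ones: the cocycle condition plays no role here, Lemma \ref{lemma.rel1} concerns $n$ acting on $(\g,[\cdot,\cdot])$ rather than ${}^tn$ on $(\g^*,[\cdot,\cdot]_{\g^*})$, and the compatibility $n\circ\rs=\rs\circ{}^tn$ belongs to the coboundary setting and is not assumed in this lemma.

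What is actually needed --- and what the paper's proof uses --- is the NL hypothesis that ${}^tn$ is Nijenhuis for $[\cdot,\cdot]_{\g^*}$, propagated through the hierarchy: after substituting $\iota_{({}^tn)^{k-1}}\delta(\xi)=\delta_{({}^tn)^{k-1}}(\xi)+\delta(n^{k-1}\xi)$, the leftover terms are absorbed using $\lcf({}^tn)^{k-1},({}^tn)^{k-1}\rcf\equiv 0$ and $\lcf{}^tn,{}^tn\rcf_{({}^tn)^{k-1}}\equiv 0$, both consequences of $\lcf{}^tn,{}^tn\rcf\equiv 0$ by the Kosmann-Schwarzbach--Magri facts recalled around Remark \ref{remn}. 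Via the duality $\delta=([\cdot,\cdot]_{\g^*})^t$ these torsion conditions are precisely statements such as $\delta(\xi)({}^tn\eta_1,{}^tn\eta_2)=[\eta_1,\eta_2]^{{}^tn}(n\xi)$, and it is these that make the cross terms produced by \eqref{phiii} cancel against $\iota_{{}^tn}\delta(n^{k-1}\xi)$, $\iota_{({}^tn)^{k-1}}\delta(n\xi)$ and $2\delta(n^k\xi)$. Until that identification is supplied, your proposed induction on $k$ has no content at the inductive step, so the argument as written has a genuine gap.
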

	\begin{proof}
Using \ref{phiii}, we get
	\begin{equation}
		\begin{array}{rcl}
			&&\delta_{(n^{t})^k}(\xi)(\eta_1,\eta_2)=\iota_{{(^tn)}^k}\delta(\xi)(\eta_1,\eta_2)-\delta(n^k\xi)(\eta_1,\eta_2)\\[6pt]
			&&=	\iota_{{}^tn}(\iota_{{}^tn^{k-1}}\delta(\xi))(\eta_1,\eta_2)-\delta(\xi)\left[  ({}^tn\eta_1,{}^tn^{k-1}\eta_2)+({}^tn^{k-1}\eta_1,{}^tn\eta_2)\right] -\delta(n^k\xi)(\eta_1,\eta_2)\,.\\[6pt]
			
		\end{array}	
	\end{equation}
	Now, by plugging $\iota_{{}^tn^{k-1}}\delta(\xi)(\eta_1,\eta_2)=\delta_{{}^tn^{k-1}}(\xi)(\eta_1,\eta_2)+\delta(n^{k-1}\xi)(\eta_1,\eta_2)$ in the above equality, and then using $\lcf {}^tn^{k-1}, {}^tn^{k-1}\rcf \equiv 0$ and $\lcf {}^tn, {}^tn\rcf_{{}^tn^{k-1}} \equiv 0$, we arrive at (\ref{Dnii}).
	\end{proof}

\noindent In the following, we present results concerning the interplay between, the adjoint representation of the Lie algebra $(\g,[\cdot,\cdot])$, the 1-cocycle $\delta$, and the Nijenhuis structures $n^k$ on $\g$, for all $k>1\,$. 
	\begin{proposition}\label{general.com}
		For a Nijenhuis operator $n$ on the Lie algebra $(\g,[\cdot,\cdot])$, we have:  
		\begin{equation}\label{rel11n}
	{\ads^*_{\,n^{k}\xi}}	= \ads_{\,\xi}^*\circ  (^tn)^k\,,\quad \forall\xi \in \g \,.
		\end{equation}
	\end{proposition}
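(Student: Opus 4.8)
The plan is to prove (\ref{rel11n}) by induction on $k\ge 1$, taking the single‑power case as the base and using the multiplicative structure $n^{k}=n\circ n^{k-1}$ to pass from $k-1$ to $k$. The point is that the whole statement is a formal consequence of the base identity together with associativity of operator composition, so essentially no new computation is needed beyond what Lemma~\ref{lemma.rel1} already provides.

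For the base case $k=1$ the assertion $\ads^*_{\,n\xi}=\ads^*_{\,\xi}\circ{}^tn$ is exactly Lemma~\ref{lemma.rel1}; this is where the hypothesis that $n$ is Nijenhuis (and not merely almost Nijenhuis) is genuinely used, since the vanishing of $\lcf n,n\rcf$ is equivalent to $[\ad_{n\xi},n]=n\,[\ad_{\xi},n]$ on $\g$, and transposing this relation gives $[{}^tn,\ad^*_{n\xi}]=[{}^tn,\ad^*_{\xi}]\circ{}^tn$ on $\g^*$. For the inductive step I would assume $\ads^*_{\,n^{k-1}\zeta}=\ads^*_{\,\zeta}\circ({}^tn)^{k-1}$ for all $\zeta\in\g$, then for a fixed $\xi$ write $n^{k}\xi=n^{k-1}(n\xi)$ and apply the hypothesis with $\zeta=n\xi$ to obtain $\ads^*_{\,n^{k}\xi}=\ads^*_{\,n\xi}\circ({}^tn)^{k-1}$. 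Substituting the base identity $\ads^*_{\,n\xi}=\ads^*_{\,\xi}\circ{}^tn$ then yields
\[
\ads^*_{\,n^{k}\xi}=\bigl(\ads^*_{\,\xi}\circ{}^tn\bigr)\circ({}^tn)^{k-1}=\ads^*_{\,\xi}\circ({}^tn)^{k},
\]
completing the induction.

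There is no real obstacle here; the only delicate points are bookkeeping ones — applying the inductive hypothesis to $n\xi$ rather than to $\xi$ (equivalently, one may instead write $n^{k}\xi=n(n^{k-1}\xi)$, apply Lemma~\ref{lemma.rel1} to $n^{k-1}\xi$, and then the hypothesis to $\xi$; both orders give the same conclusion by associativity). As an alternative presentation, one could first establish the ``untransposed'' hierarchy $\ads_{\,n^{k}\xi}=n^{k}\circ\ads_{\,\xi}$ on $\g$ by the identical induction starting from $[\ad_{n\xi},n]=n[\ad_{\xi},n]$, and then transpose, using $(\ads_{\,\zeta})^{t}=-\ads^*_{\,\zeta}$ and $(n^{k})^{t}=({}^tn)^{k}$; the two sign factors cancel and one recovers (\ref{rel11n}). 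I would present the direct induction on $\ads^*$ as the main proof since it is shortest and avoids any sign discussion.
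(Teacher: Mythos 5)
Your proof is correct and follows essentially the same route as the paper: the paper's own argument applies the single-step identity of Lemma~\ref{lemma.rel1} at the shifted arguments $n^{k-1}\xi, n^{k-2}\xi,\dots$ (i.e.\ $\ads^*_{\,n^k\xi}=\ads^*_{\,n^{k-1}\xi}\circ{}^tn$, iterated), which is exactly your induction in the alternative order you mention. The base case, the use of the Nijenhuis condition via $[\ad_{n\xi},n]=n[\ad_\xi,n]$, and the transposition step all match the paper.
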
	
	\begin{proof}
From the equation
		 $\lcf n,n\rcf (n^{k-1}\xi,\xi_1)=0\,,$ we can deduce that $\ads_{n^k\xi}=n\ads _{n^{k-1}\xi}$ on $\g$. Consequently, we have
$
\ads^*_{n^k\xi}=	\ads^* _{n^{k-1}\xi}\circ {}^tn$ on $\g^*$. By applying this process iteratively, we ultimately arrive at (\ref{rel11n}).
	\end{proof}	 	 
		  \begin{proposition}\label{prop2}
		  	Let $(\g,[\cdot,\cdot],\delta,n)$ be an NL bialgebra. Then
		  \begin{equation}\label{rel111}
		  \left[ 	\iota_{\ads^*_{\,\xi_1}}\delta(n^k\xi_2)-\iota _{{}^tn\circ \ads^*_{\,\xi_1}}\delta(n^{k-1}\xi_2)
		  \right](\eta_1,\eta_2)~-~\circlearrowleft~=\,0\,,
		  \end{equation}
		  where by $\circlearrowleft$ we mean the permutation of $\xi_1$ and $\xi_2\,$.
		  	  \end{proposition}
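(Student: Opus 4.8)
\emph{Proof proposal.} The plan is to argue by induction on $k\geq 1$, taking the weak NL condition $C(\delta,n)\equiv 0$ as the base case and peeling off one factor of $n$ at each step. For $k=1$, the identity (\ref{rel111}) reads
$\iota_{\ads^*_{\,\xi_1}}\delta(n\xi_2)-\iota_{{}^tn\circ\ads^*_{\,\xi_1}}\delta(\xi_2)-\circlearrowleft=0$,
which is exactly equation (\ref{com3}); this holds because $(\g,[\cdot,\cdot],\delta,n)$ is an NL bialgebra (condition (3)), hence in particular a weak NL bialgebra. For the inductive step it is convenient to set
$B_k(\xi_1,\xi_2):=\iota_{\ads^*_{\,\xi_1}}\delta(n^k\xi_2)-\iota_{{}^tn\circ\ads^*_{\,\xi_1}}\delta(n^{k-1}\xi_2)$,
so that (\ref{rel111}) is the claim $B_k(\xi_1,\xi_2)\equiv B_k(\xi_2,\xi_1)$. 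Since $n^k\xi_2=n(n^{k-1}\xi_2)$, directly from the definitions one has $B_k(\xi_1,\xi_2)=B_1(\xi_1,n^{k-1}\xi_2)$, and the base case makes $B_1$ symmetric, so the assertion is equivalent to $B_1(n^{k-1}\xi_2,\xi_1)=B_1(n^{k-1}\xi_1,\xi_2)$ — note this is \emph{not} just symmetry of $B_1$, because the two sides carry $n^{k-1}$ on different arguments.

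To prove that last equality I would expand $B_1(n^{k-1}\xi_2,\xi_1)=\iota_{\ads^*_{\,n^{k-1}\xi_2}}\delta(n\xi_1)-\iota_{{}^tn\circ\ads^*_{\,n^{k-1}\xi_2}}\delta(\xi_1)$ using Proposition \ref{general.com}, $\ads^*_{\,n^{k-1}\xi}=\ads^*_{\,\xi}\circ({}^tn)^{k-1}$, together with the composition rule (\ref{phiii}) for $\iota$ (equivalently the anti-automorphism property (\ref{phii})), the relation $\delta\circ n=\iota_{{}^tn}\delta-\dn$ from (\ref{Dn}), and Lemma \ref{lemma.rel1}. After these substitutions each side splits into three kinds of terms: (a) a common piece $\iota_{({}^tn)^{k-1}}B_1(\xi_i,\xi_j)$, symmetric in $\xi_1\leftrightarrow\xi_2$ by the base case; (b) a $\delta$-twisted correction term built from $\delta$, the covector operators $({}^tn)^{k}$ and $\ads^*$, whose $\xi_1\leftrightarrow\xi_2$ antisymmetrization one rewrites via (\ref{phiii})/(\ref{phii}) and then collapses using the cocycle identity $\partial\delta\equiv0$ and the deformed-cocycle identity $\partial\dsn\equiv0$ (equation (\ref{deln})), again aided by Proposition \ref{general.com}; and (c) a $\dn$-twisted correction term, treated by the analogous analysis of $\dn$, which is itself a $1$-cocycle (Proposition \ref{prop-equiv}) and satisfies the same deformation identities through (\ref{Dnii}) for the iterated $\delta_{(^tn)^j}$, so the induction can be carried for $\delta$ and $\dn$ simultaneously.

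Throughout, the standing hypotheses $\lcf n,n\rcf\equiv0$ and $\lcf {}^tn,{}^tn\rcf\equiv0$ — and, in the higher steps, the vanishing of all iterated torsions $\lcf {}^tn,{}^tn\rcf_i\equiv0$ from the Remark following Remark \ref{remn} — are exactly what is needed for the twisted correction terms to reorganize and cancel. The main obstacle is therefore not conceptual but combinatorial: keeping track of the several correction terms generated by (\ref{phiii}) and verifying that, after antisymmetrization in $\xi_1\leftrightarrow\xi_2$, each surviving term either reproduces the level-$(k-1)$ identity (for $\delta$, respectively for $\dn$) or is annihilated by one of the cocycle/Nijenhuis conditions; once this accounting is done, the inductive step closes and (\ref{rel111}) follows for all $k\geq1$.
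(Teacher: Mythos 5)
Your reduction is fine as far as it goes: setting $B_k(\xi_1,\xi_2)=\iota_{\ads^*_{\,\xi_1}}\delta(n^k\xi_2)-\iota_{{}^tn\circ\ads^*_{\,\xi_1}}\delta(n^{k-1}\xi_2)$, you correctly observe that $B_k(\xi_1,\xi_2)=B_1(\xi_1,n^{k-1}\xi_2)$, that the case $k=1$ is exactly $C(\delta,n)\equiv 0$, i.e.\ (\ref{com3}), and that the claim is therefore equivalent to $B_1(n^{k-1}\xi_2,\xi_1)=B_1(n^{k-1}\xi_1,\xi_2)$, which is \emph{not} a formal consequence of the symmetry of $B_1$. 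But from that point on the proposal stops being a proof. The inductive step---which is the entire content of the proposition beyond $k=1$---is only described: you list the ingredients (Proposition \ref{general.com}, the rule (\ref{phiii}), the relation (\ref{Dn}), Lemma \ref{lemma.rel1}) and then assert that the expansion ``splits into three kinds of terms'' each of which either reproduces the level-$(k-1)$ identity or is killed by a cocycle/Nijenhuis condition. None of these cancellations is exhibited, and they are not self-evident: for instance, rewriting $\iota_{\ads^*_{\,\xi_2}\circ({}^tn)^{k-1}}\delta(n\xi_1)$ through (\ref{phiii}) produces cross terms such as $\delta(n\xi_1)\bigl(({}^tn)^{k-1}\eta_1,\ads^*_{\,\xi_2}\eta_2\bigr)$ which match no instance of the inductive hypothesis, and disposing of exactly such terms is where an identity is needed. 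As written, the decisive step is announced rather than proved, so there is a genuine gap.

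For comparison, the paper does not use induction at all. It contracts the vanishing torsion, $\lcf{}^tn,{}^tn\rcf\equiv 0$ (condition (2) of an NL bialgebra), with $\iota_{\ads^*_{\,\xi_1}}$, evaluates the result on $n^{k-1}\xi_2$, and antisymmetrizes in $\xi_1\leftrightarrow\xi_2$; this single identity converts the left-hand side of (\ref{rel111}) into $\delta(n^{k-1}\xi_2)\bigl[({}^tn\eta_1,\ads^*_{\,\xi_1}\eta_2)+(\ads^*_{\,\xi_1}\eta_1,{}^tn\eta_2)\bigr]-\iota_{\ads^*_{\,\xi_1}}\dn(n^{k-1}\xi_2)(\eta_1,\eta_2)-\circlearrowleft$, which then vanishes by $C(\delta,n)\equiv 0$ in the reformulation given in the Comment following the definition of the concomitant. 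In other words, the torsion of ${}^tn$ is not background ``glue'' that makes unspecified corrections cancel, as in your sketch; it is the explicit bridge that moves one power of $n$ from the argument of $\delta$ onto the covector slots. If you want to complete your argument, the missing lemma you must state and prove is precisely this contracted-torsion identity (or an equivalent of it), and once you have it the induction becomes unnecessary.
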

		  	  \begin{proof}	  	
		\noindent  Since $\lcf{}^tn,{}^tn\rcf=0$, we have 
		$$[\iota_{\ads^*_{\xi_1}}\lcf {}^tn,{}^tn \rcf (\eta_1,\eta_2)](n^{k-1}\xi_2)~-~\circlearrowleft~=0\,,$$
	and thus
	\[
	\begin{array}{rcl}
&& \left[	\iota_{\ads^*_{\,\xi_1}}\delta(n^k\xi_2)-\iota _{{}^tn\circ \ads^*_{\,\xi_1}}\delta(n^{k-1}\xi_2)
	\right](\eta_1,\eta_2)~-~\circlearrowleft~=\\[8pt]
	&&\delta(n^{k-1}\xi_2)\left[({}^tn\eta_1,\ads^*_{\xi_1}\eta_2)+(\ads^*_{\xi_1}\eta_1,{}^tn\eta_2)	\right]-\iota_{\ads^*_{\xi_1}}\dn(n^{k-1}\xi_2)(\eta_1,\eta_2)~-~\circlearrowleft\,.
	\end{array}
	\]
	Since $C(\delta,n)=0$, hence (\ref{rel111}). 
		\end{proof} 
 
	\begin{proposition}\label{Propositionequiv.n}
		Let $(\g,[\cdot,\cdot],\delta,n)$ be an NL bialgebra. The deformation of the $1$-cocycle $\delta$ by $(^tn)^k$ is a $1$-cocycle if and only if $\delta$ is also a $1$-cocycle in the deformed cohomology defined by the deformed Lie bracket $[\cdot,\cdot]_{{n}^k}$\,. 
	\end{proposition}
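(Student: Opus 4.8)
The plan is to run the proof of Proposition~\ref{prop-equiv} (the case $k=1$) essentially verbatim, replacing the operator $n$ by $n^k$ throughout; only one structural remark is needed to justify this. Since $n$ is a Nijenhuis operator for $[\cdot,\cdot]$, so is every power $n^k$ (immediate from Corollary~\ref{weakn} by induction on $k$), the deformed bracket $[\cdot,\cdot]_{n^k}$ has exactly the shape~(\ref{defn}) for the single operator $n^k$, namely $[\xi_1,\xi_2]_{n^k}=[n^k\xi_1,\xi_2]+[\xi_1,n^k\xi_2]-n^k[\xi_1,\xi_2]$, and hence its adjoint representation --- read directly off that formula --- is
\[
\ad^{n^k}_{\xi}=[\ad_{\xi},n^k]+\ad_{n^k\xi}\,,\qquad (\ad^{n^k}_{\xi})^{*}=[({}^tn)^{k},\ad^{*}_{\xi}]+\ad^{*}_{n^k\xi}\,,
\]
i.e. exactly (\ref{ad-n})--(\ref{ad*-n}) with $n$ replaced by $n^k$. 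Thus no identity beyond those already available is needed.

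The computation then mirrors the $k=1$ proof. First I would expand the left-hand side: by~(\ref{Dni}) we have $\delta_{({}^tn)^{k}}(\xi)=\iota_{({}^tn)^{k}}\delta(\xi)-\delta(n^k\xi)$, and inserting this into $\partial\delta_{({}^tn)^{k}}(\xi_1,\xi_2)=\ad^{(2)}_{\xi_1}\delta_{({}^tn)^{k}}(\xi_2)-\ad^{(2)}_{\xi_2}\delta_{({}^tn)^{k}}(\xi_1)-\delta_{({}^tn)^{k}}([\xi_1,\xi_2])$, using the cocycle identity~(\ref{cocycle}) for $\delta$ on $(\g,[\cdot,\cdot])$ together with the anti-automorphism property~(\ref{phii}) in the form $\ad^{(2)}_{\xi}\iota_{({}^tn)^{k}}-\iota_{({}^tn)^{k}}\ad^{(2)}_{\xi}=\iota_{[({}^tn)^{k},\ad^{*}_{\xi}]}$, one obtains
\[
\begin{aligned}
\partial\delta_{({}^tn)^{k}}(\xi_1,\xi_2)={}&\iota_{[({}^tn)^{k},\ad^{*}_{\xi_1}]}\delta(\xi_2)-\iota_{[({}^tn)^{k},\ad^{*}_{\xi_2}]}\delta(\xi_1)\\
&-\ad^{(2)}_{\xi_1}\delta(n^k\xi_2)+\ad^{(2)}_{\xi_2}\delta(n^k\xi_1)+\delta(n^k[\xi_1,\xi_2])\,.
\end{aligned}
\]
Then I would expand the right-hand side $\partial\delta^{n^k}(\xi_1,\xi_2)=(\ad^{n^k}_{\xi_1})^{(2)}\delta(\xi_2)-(\ad^{n^k}_{\xi_2})^{(2)}\delta(\xi_1)-\delta([\xi_1,\xi_2]_{n^k})$, using the displayed formula for $(\ad^{n^k}_{\xi})^{*}$ (so that $(\ad^{n^k}_{\xi})^{(2)}=\iota_{[({}^tn)^{k},\ad^{*}_{\xi}]}+\ad^{(2)}_{n^k\xi}$) and rewriting $\delta([\xi_1,\xi_2]_{n^k})$ with the shape~(\ref{defn}) of $[\cdot,\cdot]_{n^k}$ and~(\ref{cocycle}); the $\ad^{(2)}_{n^k\xi_i}$-terms then cancel and one is left with precisely the same expression displayed above. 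Hence $\partial\delta_{({}^tn)^{k}}$ and $\partial\delta^{n^k}$ are literally the same $(2,1)$-tensor on $\g$, so one vanishes identically if and only if the other does; this is the assertion.

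A second, purely inductive route takes Proposition~\ref{prop-equiv} as the base case and uses~(\ref{Dnii}) to present $\delta_{({}^tn)^{k}}$ as the ${}^tn$-deformation of $\delta_{({}^tn)^{k-1}}$ and~(\ref{nk})--(\ref{adn2}) to present $[\cdot,\cdot]_{n^k}$ as the $n$-deformation of $[\cdot,\cdot]_{n^{k-1}}$, so that one applies the $k=1$ equivalence ``one level higher'' and then the inductive hypothesis; on this route the cross-terms produced when $\iota_{{}^tn}$ is commuted past $\ad^{(2)}$ and $\delta(n^k\xi)$ is traded for $\delta(n^{k-1}\xi)$ are exactly those controlled by Propositions~\ref{general.com} and~\ref{prop2}, with (\ref{rel11n}) and~(\ref{rel111}) --- hence the NL-bialgebra hypotheses $\lcf{}^tn,{}^tn\rcf\equiv0$ and $C(\delta,n)\equiv0$ --- forcing their cancellation. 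In either route the only point requiring care is the bookkeeping: keeping each $\iota_{({}^tn)^{m}}$ attached to the correct argument $n^{j}\xi_i$, and invoking $\lcf{}^tn,{}^tn\rcf\equiv0$ at the step where the ``symmetric remainder'' terms produced by~(\ref{phiii}) must drop out under antisymmetrisation in $\xi_1,\xi_2$ (exactly as in the proof of~(\ref{Dnii})). I do not expect any conceptual obstacle beyond the one already met for $k=1$.
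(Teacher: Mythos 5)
Your primary route is correct and is essentially the paper's own proof: expand $\partial\delta_{({}^tn)^k}$ and $\partial\delta^{n^k}$ using (\ref{Dni}), (\ref{phii}) and the formula $(\ad^{n^k}_{\xi})^{*}=[({}^tn)^{k},\ad^{*}_{\xi}]+\ad^{*}_{n^k\xi}$, and then identify the two expressions via the original cocycle identity applied at $(n^k\xi_1,\xi_2)$ and $(\xi_1,n^k\xi_2)$, which is exactly the paper's step ``$\partial\delta(n^k\xi_1,\xi_2)=\partial\delta(\xi_1,n^k\xi_2)=0$''. The additional inductive route via (\ref{Dnii}) and Propositions \ref{general.com}--\ref{prop2} is not needed but does no harm.
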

	\begin{proof}
 By definition and using (\ref{cocycle}), (\ref{Dnii}), (\ref{adnnad}), (\ref{rel2}), we have
		\[
\partial	\delta_{^tn^k}(\xi_1,\xi_2)=\iota_{[{}^tn^k,\ad^*_{\xi_1}]}	\delta(\xi_2)-\iota_{[{}^t{n^k},\ad^*_{\xi_2}]}	\delta(\xi_1)-\iota_{\ad^*_{\xi_1}} \delta(n^k\xi_2)+\iota_{\ad^{*}_{\xi_2}}\delta(n^k\xi_1)+\delta(n^k[\xi_1,\xi_2])\,.
		\]
		On the other hand
\[ 
\partial	\delta^{n^k}(\xi_1,\xi_2)=\iota_{[{}^tn^k,\ad^*_{\xi_1}]}	\delta(\xi_2)-\iota_{[{}^t{n^k},\ad^*_{\xi_2}]}	\delta(\xi_1)+\iota_{\ad^*_{n^k\xi_1}} \delta(\xi_2)-\iota_{\ad^*_{n^k\xi_2}} \delta(\xi_1)+\delta([\xi_1,\xi_2]_{n^k})\,.
\]	
Now, using the fact that $\partial\delta(n^k\xi_1,\xi_2)=\partial\delta(\xi_1,n^k\xi_2)=0$ we obtain: $$\partial	\delta_{^tn^k}\equiv 0 \iff \partial	\delta^{n^k}\equiv 0\,. \mbox{\qedhere}$$	\end{proof}
	\subsection{The hierarchy of Lie bialgebras on an NL bialgebra}\label{sec5.2}
Given an NL bialgebra, we will demonstrate that, under certain conditions, it is possible to generate a compatible hierarchy of Lie bialgebras.
By a compatible hierarchy of Lie bialgebras, we refer to Lie bialgebras  $(\mathfrak g,[\cdot,\cdot]_{n^i}, \delta^{n^i}_{(^tn)^j})\,$, which represent the three types of deformations of $1$-cocycles. The maps $\delta^{n^i}_{(^tn)^j}$ are $1$-coycles in the cohomology induced by $(\ad^{n^i})^{(2)}:  \g\times \wedge^2\g\to \wedge^2\g$ of the Lie algebra $(\g, [\cdot,\cdot]_{n^i})\,.$
 We denote  $\delta^{n^i}_{(^tn)^j}$ as the deformed $1$-cocycle by means of $(n^i,(^tn)^j)$.  
\begin{figure}[H]
    \centering
    \subfloat{
        \xymatrixcolsep{3pc}
        \xymatrix@R-3mm @C-7mm{
            & ((\g,[\cdot,\cdot]),(\g^*,[\cdot,\cdot]_{\g^*})) \ar@{.>}[dl]_{n} \ar@{.>}[dr]^{^tn} & \cr
            ((\g,[\cdot,\cdot]_n),(\g^*,[\cdot,\cdot]_{\g^*})) \ar@{.>}[d]_{n} \ar@{.>}[dr]^{^tn} & & ((\g,[\cdot,\cdot]),(\g^*,[\cdot,\cdot]^{^tn})) \ar@{.>}[d]^{^tn} \ar@{.>}[dl]_{~~~~n} \cr
            & ((\g,[\cdot,\cdot]_n),(\g^*,[\cdot,\cdot]^{^tn})) \ar@{.>}[dl]_{} \ar@{.>}[dr]^{} & \cr
            ((\g,[\cdot,\cdot]_{n^2}),(\g^*,[\cdot,\cdot]_{\g^*})) \ar@{.>}[ur]_{} & & ((\g,[\cdot,\cdot]),(\g^*,[\cdot,\cdot]^{{(^tn)}^2})) \ar@{.>}[ul]_{} \cr
            \vdots & \vdots & \vdots \cr
            & & \cr
            \ar@{.>}[d]_{n} \ar@{.>}[dr]^{^tn} & & \ar@{.>}[dl]_{~~~~n} \ar@{.>}[d]^{^tn} \cr
            & ((\g,[\cdot,\cdot]_{n^{i}}),(\g^*,[\cdot,\cdot]^{{(^tn)}^{k-i}})) \ar@{.>}[dl]_{} \ar@{.>}[dr]^{} & \cr
            ((\g,[\cdot,\cdot]_{n^{k}}),(\g^*,[\cdot,\cdot]_{\g^*})) \ar@{.>}[ur]_{} & & ((\g,[\cdot,\cdot]),(\g^*,[\cdot,\cdot]^{{(^tn)}^{k}})) \ar@{.>}[ul]_{} 
        }
         }
    \caption{The diagram illustrates all the deformations under consideration. The left-hand column displays the pairs of Lie algebras corresponding to the iterated 1-cocycles $\delta^{n^k}$ in the deformed cohomology defined on the Lie algebras $(\g,[\cdot,\cdot]_{n^k})$. The right-hand column shows the pairs of Lie algebras corresponding to 1-cocycles deformed by means of $(^tn)^k$. The central column represents the double deformations.}
    \label{fig:deformations}
\end{figure}	
In the subsequent discussion, we will examine the conditions on NL bialgebras under which they produce a hierarchy of Lie bialgebras. 
 		\begin{corollary} \label{equivad}
	Let $(\g,[\cdot,\cdot],\delta,n)$ be an almost NL bialgebra for which 
	 the operator $n^t$ is $\ad^*$-equivariant on the Lie algebra $(\g^*,[\cdot,\cdot]_{\g^*})$, where $\ad^*:\g^*\times \g^*\to \g^*$ is the corresponding adjoint representation. That is,
	\begin{equation}\label{ad-equ}
		\delta(\xi)	(\eta_1,n\eta_2)=\delta(n\xi)(\eta_1,\eta_2),\quad \forall \xi \in \g,\quad \eta_1,\eta_2\in \g^*\,.
	\end{equation}
	Then the following are equivalent:
	\begin{itemize}
			\item The operator ${}^tn$ is $\ad^*$-equivariant in the Lie algebra $(\g^*,[\cdot,\cdot]^{{}^tn})$. That is
		\[
		\iota_{{}^tn^{k-1}\ads^*_{\xi_1}}\dn(\xi_2)(\eta_1,\eta_2)~-~\circlearrowleft~=~\iota_{{}^tn^{k-2}\ads^*_{\xi}}\dn(n\xi_1)(\eta_1,\eta_2)~-~\circlearrowleft\,.
		\]
	\item For each $k$, ${}^tn^k$ is $\ad^{(2)}$-equivariant. That is,
	\[
	\iota_{n^{k}\ads^*_{\xi_1}}\delta(\xi_2)(\eta_1,\eta_2)	 ~-~\circlearrowleft~=	~\iota_{ \ads_{\,\xi_1}}\delta(n^k\xi_2)(\eta_1,\eta_2) ~-~\circlearrowleft\,.
	\]
	\end{itemize}
	\end{corollary}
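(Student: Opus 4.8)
The plan is to treat the $\ad^*$-equivariance of $\,{}^tn$ on each of the deformed dual brackets as an explicit, telescoping identity built out of two ingredients: Lemma \ref{lemma.rel1} (equivalently its iterated form Proposition \ref{general.com}, $\ads^*_{\,n^k\xi}=\ads^*_{\,\xi}\circ({}^tn)^k$) and the identity $C(\delta,n)\equiv 0$ from Proposition \ref{prop2}, applied repeatedly. The hypothesis (\ref{ad-equ}), which says $\,{}^tn$ is $\ad^{(2)}$-equivariant already at level $k=1$, will be the base case; both displayed statements are then obtained by induction on $k$, unwinding the definitions (\ref{Dn}), (\ref{Dni}), (\ref{Dnii}) of the deformed cochains $\dn$ and $\delta_{(^tn)^k}$, and of the iterated adjoint $\ad^{n^i}$ via (\ref{hierad}).

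First I would rewrite the second bullet's identity purely in terms of $\delta$, $\ads^*$, and powers of ${}^tn$. Using Proposition \ref{general.com} we have $\iota_{n^k\ads^*_{\xi_1}}=\iota_{\ads^*_{n^k\xi_1}}$ only in a limited sense, so instead I would expand $\iota_{n^k\ads^*_{\xi_1}}\delta(\xi_2)$ via the anti-automorphism property (\ref{phii})--(\ref{phiii}) to peel off one factor of ${}^tn$ at a time, producing a sum of "correction terms" of the form $\delta(\xi_2)\big(\ldots,{}^tn^{\,a}\eta_i,\ldots,{}^tn^{\,b}\ads^*_{\xi_1}\eta_j,\ldots\big)$. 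After the cyclic antisymmetrization $-\circlearrowleft$ in $\xi_1,\xi_2$, these correction terms are exactly the ones that Proposition \ref{prop2} (the relation (\ref{rel111})) shows to cancel in pairs, because $\lcf{}^tn,{}^tn\rcf\equiv 0$ kills the symmetric combinations and $C(\delta,n)\equiv 0$ handles the rest. What survives is $\iota_{\ads^*_{\xi_1}}\delta(n^k\xi_2)-\circlearrowleft$, which is the right-hand side of the second bullet. This establishes the equivalence "almost NL $+$ (\ref{ad-equ})" $\Longleftrightarrow$ "${}^tn^k$ is $\ad^{(2)}$-equivariant for every $k$"; the forward direction is the induction just described and the backward direction is the special case $k=1$ together with the trivial observation that $\ad^{(2)}$-equivariance at all orders certainly includes order one.

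For the first bullet, I would use the definition $\dn=\iota_{{}^tn}\delta-\delta\circ n$ from (\ref{Dn}) together with $\ads^*_{\,n^{k-1}\xi}=\ads^*_{\,\xi}\circ({}^tn)^{k-1}$ (Proposition \ref{general.com}) and the recurrence (\ref{Dnii}) for $\delta_{(^tn)^{k-1}}$. The point is that $\ad^*$-equivariance of $\,{}^tn$ on $(\g^*,[\cdot,\cdot]^{{}^tn})$ unfolds, after substituting these formulas, into precisely the level-$(k{-}1)$ and level-$k$ instances of the $\ad^{(2)}$-equivariance statement already proved in the previous paragraph, the two being linked by one application of (\ref{rel111}). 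So the two bullets are equivalent because both reduce, term by term, to the same family of cancellations governed by $C(\delta,n)\equiv 0$ and $\lcf{}^tn,{}^tn\rcf\equiv 0$; (\ref{ad-equ}) is exactly what is needed to start the telescoping at the bottom. I would phrase the proof as: expand both sides using (\ref{Dn}), (\ref{Dnii}), (\ref{hierad}), Proposition \ref{general.com}; apply (\ref{phii}) to commute $\iota_{{}^tn}$ past $\iota_{\ads^*_{\xi}}$; then invoke Proposition \ref{prop2} and the vanishing of $\lcf{}^tn,{}^tn\rcf$ to discard all correction terms.

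The main obstacle I anticipate is purely bookkeeping: keeping track of the many correction terms generated by (\ref{phiii}) when commuting $\iota_{{}^tn^a}$ through $\iota_{\ads^*_{\xi}}$, and verifying that after the $\xi_1\leftrightarrow\xi_2$ antisymmetrization every such term is matched either by a $\lcf{}^tn,{}^tn\rcf$-term (hence zero since $n$ is Nijenhuis and so is ${}^tn$) or by a $C(\delta,n)$-term (hence zero by the NL hypothesis). I expect no conceptual difficulty beyond Propositions \ref{general.com} and \ref{prop2}, which already isolate the two cancellation mechanisms; the induction on $k$ then runs mechanically, and one should be careful only that the $k=1$ case of the second bullet is literally hypothesis (\ref{ad-equ}) (i.e. that $\iota_{\ads^*_{\xi_1}}\delta(n\xi_2)-\circlearrowleft$ equals $\iota_{n\ads^*_{\xi_1}}\delta(\xi_2)-\circlearrowleft$), which follows from (\ref{ad-equ}) combined with Lemma \ref{lemma.rel1}.
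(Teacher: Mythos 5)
The paper itself gives no written proof of this corollary; it is meant to follow from Lemma \ref{lemma.rel1}, Proposition \ref{general.com} and Proposition \ref{prop2}, together with the remark recorded immediately after the statement that an almost NL bialgebra satisfying (\ref{ad-equ}) is in fact an NL bialgebra. Your plan draws on exactly these ingredients, so it is in line with the intended derivation, and the induction on $k$ you sketch would go through. One step, however, is asserted rather than proved and must be filled in: you discard the correction terms ``by the NL hypothesis'' ($C(\delta,n)\equiv 0$) and because ``${}^tn$ is Nijenhuis'', but neither of these is among the stated hypotheses, which are only \emph{almost} NL plus (\ref{ad-equ}); moreover Proposition \ref{prop2} does not supply $C(\delta,n)\equiv 0$, it assumes it. The missing verification is short and should be stated: since $[\eta_1,\eta_2]_{\g^*}(\xi)=\delta(\xi)(\eta_1,\eta_2)$, the equivariance (\ref{ad-equ}) together with skew-symmetry gives $\delta(\xi)({}^tn\eta_1,\eta_2)=\delta(\xi)(\eta_1,{}^tn\eta_2)=\delta(n\xi)(\eta_1,\eta_2)$, from which one gets in one line both $\lcf {}^tn,{}^tn\rcf=0$ on $(\g^*,[\cdot,\cdot]_{\g^*})$ and the termwise identity $\iota_{{}^tn\circ\ads^*_{\xi_1}}\delta(\xi_2)=\iota_{\ads^*_{\xi_1}}\delta(n\xi_2)$, i.e.\ $C(\delta,n)\equiv 0$; only then are Proposition \ref{prop2} and the torsion cancellations legitimately available. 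Once this is in place, you can also shorten your bookkeeping considerably: applying (\ref{ad-equ}) repeatedly moves each factor of ${}^tn$ from an argument of $\delta$ onto its entry, so $\iota_{({}^tn)^k\circ\ads^*_{\xi_1}}\delta(\xi_2)=\iota_{\ads^*_{\xi_1}}\delta(n^k\xi_2)$ holds termwise, before the antisymmetrization in $\xi_1,\xi_2$, and likewise $\dn=\iota_{{}^tn}\delta-\delta\circ n=\delta\circ n$; hence both bullets hold outright (and are therefore equivalent) without commuting $\iota_{{}^tn}$ past $\iota_{\ads^*_{\xi}}$ via (\ref{phiii}) at all.
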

\noindent Note that an almost NL bialgebra $(\g,[\cdot,\cdot],\delta,n)$ for which ${}^tn$ is $\ad^*$-equivariant , is an NL bialgebra. 

The following corollaries are immediate results for NL bialgebras that lead to the theorem establishing a hierarchy of Lie bialgebras.
		\begin{corollary}\label{hire}
 	Let $(\g,[\cdot,\cdot],\delta,n)$ be an NL bialgebra. For each integer $k$, the concomitant $C(\delta,n^k)$ 	
			\begin{equation}\label{rel112}
		C(\delta,n^k)(\eta_1,\eta_2)([\xi_1,\xi_2])=	\iota_{n^{k}\ads^*_{\xi_1}}\delta(\xi_2)(\eta_1,\eta_2)		-\iota_{ \ads_{\,\xi_1}}\delta(n^k\xi_2)(\eta_1,\eta_2) ~-~\circlearrowleft	\,,\\[4pt]
	\end{equation}
	vanishes if and only if $n^t$ is an $\ad^*$-equivariant operator on the Lie algebra $(\g^*,[\cdot,\cdot]_{\g^*})$.	
 	\end{corollary}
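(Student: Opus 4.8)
The plan is to work directly from the explicit expression (\ref{rel112}) for $C(\delta,n^k)$, feeding in the two structural identities already established on an NL bialgebra: the telescoping relation (\ref{rel111}) of Proposition \ref{prop2}, which rewrites $\iota_{\ads^*_{\,\xi_1}}\delta(n^k\xi_2)-\circlearrowleft$ as $\iota_{{}^tn\circ\ads^*_{\,\xi_1}}\delta(n^{k-1}\xi_2)-\circlearrowleft$, and the shift identity (\ref{rel11n}) of Proposition \ref{general.com}, namely $\ads^*_{\,n^k\xi}=\ads^*_{\,\xi}\circ({}^tn)^k$. Note first that the $k=1$ instance of $C(\delta,n^k)\equiv0$ is precisely condition (3), $C(\delta,n)\equiv0$, in the definition of an NL bialgebra (equivalently the weak-NL condition (\ref{com3})), so it holds automatically; the content of the statement therefore lies in the higher powers.

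For the implication ``${}^tn$ $\ad^*$-equivariant on $(\g^*,[\cdot,\cdot]_{\g^*})$ $\Rightarrow$ $C(\delta,n^k)\equiv0$ for every $k$'', I would start from (\ref{ad-equ}), $\delta(\xi)(\eta_1,{}^tn\eta_2)=\delta(n\xi)(\eta_1,\eta_2)$, and observe that the skew-symmetry of $\delta(\xi)\in\wedge^2\g$ gives equally $\delta(\xi)({}^tn\eta_1,\eta_2)=\delta(n\xi)(\eta_1,\eta_2)$; iterating, $\delta(n^k\xi)(\eta_1,\eta_2)=\delta(\xi)\bigl(({}^tn)^a\eta_1,({}^tn)^b\eta_2\bigr)$ for every $a+b=k$. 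Substituting this into the second term of (\ref{rel112}) — with the $k$ copies of ${}^tn$ moved, respectively, onto the first and the second slot of $\delta(\xi_2)$ — turns $\iota_{\ads^*_{\,\xi_1}}\delta(n^k\xi_2)$ into $\iota_{({}^tn)^k\circ\ads^*_{\,\xi_1}}\delta(\xi_2)$, i.e.\ into the first term of (\ref{rel112}); the two terms cancel and $C(\delta,n^k)\equiv0$. (Alternatively one can induct on $k$, feeding (\ref{ad-equ}) into (\ref{rel111}).)

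For the converse, assuming $C(\delta,n^k)\equiv0$ for all $k$, I would use (\ref{rel111}) to replace the second term of (\ref{rel112}), so that the vanishing of $C(\delta,n^k)$ reads $\iota_{({}^tn)^k\circ\ads^*_{\,\xi_1}}\delta(\xi_2)-\circlearrowleft=\iota_{{}^tn\circ\ads^*_{\,\xi_1}}\delta(n^{k-1}\xi_2)-\circlearrowleft$; comparing these relations for consecutive $k$ and stripping one power of $n$ (and ${}^tn$) at a time by means of (\ref{rel11n}), one isolates the $\g$-bilinear defect $D(\xi;\eta_1,\eta_2):=\delta(\xi)(\eta_1,{}^tn\eta_2)-\delta(n\xi)(\eta_1,\eta_2)$ and shows that it must vanish identically; this is exactly (\ref{ad-equ}), and Corollary \ref{equivad} then identifies this equivariance with that of every ${}^tn^k$. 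This converse is where the real work lies: each higher concomitant is assembled out of ${}^tn$-twisted copies of $D$, and to run the argument in reverse — deducing the pointwise vanishing of $D$ from the skew-symmetrised, $\ads^*$-twisted relations packaged in $\{C(\delta,n^k)\equiv0\}_k$ — one genuinely needs the full NL-bialgebra data ($\lcf{}^tn,{}^tn\rcf\equiv0$, $\partial\delta^n\equiv0$, $C(\delta,n)\equiv0$), since the operators $\{n^k\}_k$ by themselves span only $\R[n]\subseteq\gl(\g)$ and nothing can be concluded from linearity alone. Throughout, the recurring bookkeeping hazard is keeping the $\iota$-conventions and the distinction between $n$ on $\g$ and ${}^tn$ on $\g^*$ consistent.
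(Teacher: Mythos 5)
Your forward implication is correct and complete: from (\ref{ad-equ}) and the skew-symmetry of $\delta(\xi)$ you get $\delta(n^k\xi)(\eta_1,\eta_2)=\delta(\xi)\bigl(({}^tn)^a\eta_1,({}^tn)^b\eta_2\bigr)$ for $a+b=k$, and substituting this into the second term of (\ref{rel112}) makes it coincide termwise with the first, so $C(\delta,n^k)\equiv 0$. This is exactly the content the paper treats as immediate (it gives no proof of the corollary, presenting it as a repackaging of Corollary \ref{equivad} together with Propositions \ref{general.com} and \ref{prop2}), so on that half you are in line with the intended argument.

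The converse, however, is a genuine gap: you describe a strategy ("comparing the relations for consecutive $k$ and stripping one power of $n$ at a time via (\ref{rel11n}) to isolate the defect $D$") but never carry it out, and there is a concrete reason to doubt it can be carried out as described. At $k=1$ the vanishing $C(\delta,n)\equiv 0$ is part of the definition of an NL bialgebra, yet the paper's own coboundary example on $\R^4$ at the end of Section \ref{sec5} is an NL bialgebra whose ${}^tn$ is explicitly \emph{not} $\ad^*$-equivariant; hence the vanishing of the concomitant at a fixed order cannot force the pointwise vanishing of $D(\xi;\eta_1,\eta_2)=\delta(\xi)(\eta_1,{}^tn\eta_2)-\delta(n\xi)(\eta_1,\eta_2)$, because $C(\delta,n^k)$ only records a skew-symmetrized, $\ads^*_{\,\xi}$-twisted combination of $D$ tested on commutators, and information is lost. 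So any honest converse must first fix the quantification (vanishing for \emph{all} $k$ simultaneously, not for each $k$ separately) and then extract $D\equiv 0$ from that family of identities; your proposed mechanism does not do this, since (\ref{rel11n}) merely transfers powers of $n$ between the subscript of $\ads^*$ and compositions with ${}^tn$ -- it provides no way to invert ${}^tn$ (which need not be invertible) nor to recover $D$ at arbitrary arguments from the twisted, skew-symmetrized expressions. As written, the "only if" half of the statement is asserted rather than proved, and that is precisely the part a complete proof would have to supply.
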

\noindent Combining the results of Corollaries ~\ref{equivad} and \ref{hire}, we deduce that $C(\delta,n^k)\equiv 0$ if and only if $C(\dn,n^{k-1})\equiv 0$. In general:
    \begin{corollary}\label{doubledef}
	For an NL bialgebra $(\g,[\cdot,\cdot],\delta,n)$, we have
	\[
	\partial \delta_{(^tn)^k}\equiv 0\,\iff \,\partial \delta^{n^k}\equiv 0 \,\iff \,\partial \delta_{(^tn)^j}^{n^i}\equiv 0,\quad i+j=k\,.
	\]
\end{corollary}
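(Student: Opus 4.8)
The plan is to reduce the whole statement to Proposition~\ref{Propositionequiv.n}, applied not only to the given NL bialgebra but to each node $(\g,[\cdot,\cdot]_{n^i},\delta^{n^i},n)$ of the diagram in Figure~\ref{fig:deformations}. The first equivalence, $\partial\delta_{(^tn)^k}\equiv 0\iff\partial\delta^{n^k}\equiv 0$, is exactly Proposition~\ref{Propositionequiv.n} for the given data with deformation power $k$. The real point is therefore to show that, for $i+j=k$, the double-deformed cochain $\delta^{n^i}_{(^tn)^j}$ is a $1$-cocycle in the deformed cohomology of $(\g,[\cdot,\cdot]_{n^i})$ precisely when $\delta^{n^k}$ is.

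The key step is the claim that each iterate $(\g,[\cdot,\cdot]_{n^i},\delta^{n^i},n)$ of an NL bialgebra $(\g,[\cdot,\cdot],\delta,n)$ is again an NL bialgebra, which I would prove by induction on $i$, the case $i=0$ being the hypothesis. Axioms~(2) and~(3) propagate routinely: the dual bracket $[\cdot,\cdot]_{\g^*}$ is unchanged, so $\lcf {}^tn,{}^tn\rcf\equiv 0$ persists, and $C(\delta^{n^i},n)\equiv 0$ is equivalent, by iterating the remark combining Corollaries~\ref{equivad} and~\ref{hire}, to $C(\delta,n^{i+1})\equiv 0$, hence by Corollary~\ref{hire} to $^tn$ being $\ad^*$-equivariant, i.e.\ to the NL-bialgebra axiom $C(\delta,n)\equiv 0$. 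The substantive axiom is~(1), $\partial\delta^{n^{i+1}}\equiv 0$. Since an NL bialgebra is in particular a weak NL bialgebra (condition~(\ref{com3}) is precisely $C(\delta,n)\equiv 0$), Theorem~\ref{nnt} applied at the node $(\g,[\cdot,\cdot]_{n^{i-1}},\delta^{n^{i-1}},n)$ gives that the double deformation, which as a map is $\iota_{{}^tn}\delta-\delta\circ n$, is a $1$-cocycle in the $\bigl([\cdot,\cdot]_{n^{i-1}}\bigr)_n=[\cdot,\cdot]_{n^i}$-deformed cohomology; feeding this into Proposition~\ref{prop-equiv} for the Lie bialgebra $(\g,[\cdot,\cdot]_{n^i},\delta^{n^i})$ --- legitimate because $\partial\delta^{n^i}\equiv 0$ by the induction hypothesis, $n$ is Nijenhuis for $[\cdot,\cdot]_{n^i}$, and $^tn$ is Nijenhuis for the dual --- converts it into $\partial\delta^{n^{i+1}}\equiv 0$. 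The identification $\bigl([\cdot,\cdot]_{n^a}\bigr)_{n^b}=[\cdot,\cdot]_{n^{a+b}}$ used throughout is the recurrence~(\ref{nk}).

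Granting this, the corollary follows at once: for $i+j=k$, Proposition~\ref{Propositionequiv.n} applied to the NL bialgebra $(\g,[\cdot,\cdot]_{n^i},\delta^{n^i},n)$ with deformation power $j$ identifies $\partial\delta^{n^i}_{(^tn)^j}\equiv 0$ with the statement that $\delta^{n^i}$ is a $1$-cocycle in the $\bigl([\cdot,\cdot]_{n^i}\bigr)_{n^j}=[\cdot,\cdot]_{n^{i+j}}$-deformed cohomology, that is, with $\partial\delta^{n^k}\equiv 0$; together with the first equivalence this shows all three conditions are equivalent --- and, since $\partial\delta^n\equiv 0$ for an NL bialgebra, they all in fact hold.

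I expect the only real obstacle to be organizational: one must be scrupulous about which Lie bracket each cochain is a cochain \emph{for}, and in particular verify that the conclusion of Theorem~\ref{nnt} at level $i-1$ is literally the hypothesis of Proposition~\ref{prop-equiv} at the shifted base $[\cdot,\cdot]_{n^i}$, both asserting that the single map $\iota_{{}^tn}\delta-\delta\circ n$ is a $1$-cocycle in the $[\cdot,\cdot]_{n^i}$-deformed cohomology. Once this bookkeeping is set up (guided by Figure~\ref{fig:deformations} and the recurrences of Section~\ref{sec5}), no fresh computation is needed: the proof is assembled entirely from Theorem~\ref{nnt}, Propositions~\ref{prop-equiv} and~\ref{Propositionequiv.n}, and Corollaries~\ref{equivad}--\ref{hire}.
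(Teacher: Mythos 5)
Your reduction of the first equivalence to Proposition~\ref{Propositionequiv.n} is fine, and the idea of sliding down the left column of Figure~\ref{fig:deformations} is natural, but the induction on which everything rests has an unjustified step. To propagate axiom~(3) you claim that $C(\delta^{n^i},n)\equiv 0$ is equivalent to $C(\delta,n^{i+1})\equiv 0$, ``hence by Corollary~\ref{hire} to ${}^tn$ being $\ad^*$-equivariant, i.e.\ to the NL-bialgebra axiom $C(\delta,n)\equiv 0$.'' That last identification is false: $\ad^*$-equivariance of ${}^tn$ on $(\g^*,[\cdot,\cdot]_{\g^*})$ is strictly stronger than $C(\delta,n)\equiv 0$ --- it is exactly the \emph{additional} hypothesis of Theorem~\ref{main}, and the coboundary NL bialgebra on $\R^4$ constructed in Section~\ref{sec5} satisfies the NL axioms while the paper points out explicitly that its ${}^tn$ is \emph{not} $\ad^*$-equivariant. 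So the NL axioms alone do not give the vanishing of the higher concomitants (equivalently of condition~(\ref{com3}) at the deformed nodes); securing that is precisely why Theorem~\ref{main} assumes $\ad^*$-equivariance (``the vanishing of the concomitant is preserved by the hierarchy''). Your propagation of axiom~(1), $\partial\delta^{n^{i+1}}\equiv 0$, is obtained by invoking Theorem~\ref{nnt} at the node of level $i-1$, whose criterion is exactly (\ref{com3}) at that node, so it inherits the same gap; and the closing assertion that all three conditions ``in fact hold'' for every NL bialgebra overshoots what even Theorem~\ref{main} claims without the extra hypothesis --- from the NL axioms one only gets the level-one cocycle conditions and, via Theorem~\ref{nnt}, that $\delta^{n}_{{}^tn}$ is a cocycle.

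There is also a bookkeeping point that makes the nodewise strategy more delicate than you indicate: the proof of Proposition~\ref{Propositionequiv.n} uses the cocycle property of $\delta$ for the base bracket (the step $\partial\delta(n^k\xi_1,\xi_2)=\partial\delta(\xi_1,n^k\xi_2)=0$), so applying it at the node $(\g,[\cdot,\cdot]_{n^i},\delta^{n^i},n)$ presupposes $\partial\delta^{n^i}\equiv 0$, a lower-level condition which for a bare NL bialgebra is only guaranteed at $i\le 1$. The intended derivation in the paper does not promote each node to a full NL bialgebra; it runs through the concomitant identities (Corollaries~\ref{equivad} and \ref{hire}, Propositions~\ref{general.com} and \ref{prop2}) that relate $\partial\delta^{n^i}_{({}^tn)^j}$, $\partial\delta_{({}^tn)^k}$ and $\partial\delta^{n^k}$ directly, with the preservation of $C(\delta,n^k)\equiv 0$ along the hierarchy supplied by the $\ad^*$-equivariance hypothesis where it is needed. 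As written, your argument would need either that hypothesis or an independent proof that $C(\delta^{n^i},n)\equiv 0$ for all $i$ before the induction closes.
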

 
	\begin{theorem}\label{main}
Let $(\g,[\cdot,\cdot],\delta,n)$ be an (almost) NL bialgebra, and let ${}^tn$ be an $\ad^*$-equivariant operator on the Lie algebra $(\g^*,[\cdot,\cdot]_{\g^*})$. Then there exists a compatible hierarchy of Lie bialgebras  given by  $(\mathfrak g,[\cdot,\cdot]_{n^i}, \delta^{n^i}_{(^tn)^j})\,$ for all integers  $i,j\geq 0$.
\end{theorem}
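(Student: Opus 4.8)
The plan is to split the assertion into its three constituents and handle them in order of difficulty. Writing $k=i+j$, for each pair of non-negative integers $(i,j)$ one must verify that: (a) the skew-symmetric bracket on $\g^*$ dual to $\delta^{n^i}_{(^tn)^j}$ is a Lie bracket; (b) $\delta^{n^i}_{(^tn)^j}$ is a $1$-cocycle in the cohomology of $(\g,[\cdot,\cdot]_{n^i})$ with values in $\wedge^2\g$ under $(\ad^{n^i})^{(2)}$, i.e.\ $\partial\delta^{n^i}_{(^tn)^j}\equiv 0$; and (c) all the brackets $[\cdot,\cdot]_{n^i}$ on $\g$, as well as all the brackets $[\cdot,\cdot]^{(^tn)^j}$ on $\g^*$, are pairwise compatible. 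I would first dispose of (a) and (c). As a map $\g\to\wedge^2\g$, $\delta^{n^i}_{(^tn)^j}$ is just $\delta_{(^tn)^j}$ (the superscript $i$ only records the deformed cohomology in which it is read), so its transpose is $[\cdot,\cdot]^{(^tn)^j}_{\g^*}$; hence (a) amounts to $[\cdot,\cdot]^{(^tn)^j}_{\g^*}$ being a Lie bracket, which follows from Proposition \ref{Lie} once $({}^tn)^j$ is known to be (almost) Nijenhuis on $(\g^*,[\cdot,\cdot]_{\g^*})$. This, together with (c), is a formal consequence of the Kosmann-Schwarzbach--Magri hierarchy results recalled in Remark \ref{remn} and the remarks preceding it, applied to $n$ on $(\g,[\cdot,\cdot])$ and to ${}^tn$ on $(\g^*,[\cdot,\cdot]_{\g^*})$ (recall that $\lcf{}^tn,{}^tn\rcf\equiv 0$ is part of the NL-bialgebra hypothesis): each $n^a$ has vanishing Nijenhuis torsion with respect to every bracket of the hierarchy, so for $a\le b$ the bracket $[\cdot,\cdot]_{n^b}$ is, by iteration of the recursion (\ref{nk}), the $n^{b-a}$-deformation of $[\cdot,\cdot]_{n^a}$, hence by Proposition \ref{Lie} a Lie bracket compatible with $[\cdot,\cdot]_{n^a}$; the argument on $\g^*$ is symmetric.

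Everything then reduces to (b). The first step is to invoke Corollary \ref{doubledef}: for $i+j=k$, one has $\partial\delta^{n^i}_{(^tn)^j}\equiv 0$ if and only if $\partial\delta^{n^k}\equiv 0$. So it suffices to prove $\partial\delta^{n^k}\equiv 0$ for every $k\ge 0$, and I would argue by induction on $k$. For $k=0$ this is the cocycle condition for the Lie bialgebra $(\g,[\cdot,\cdot],\delta)$; for $k=1$ it is condition $(1)$ in the definition of an NL bialgebra, namely $\partial\delta^{n}\equiv 0$. A useful running remark is that the $\ad^*$-equivariance of ${}^tn$ on $(\g^*,[\cdot,\cdot]_{\g^*})$ forces, by Corollary \ref{hire}, the vanishing of the concomitant $C(\delta,n^k)$ for \emph{every} $k$ at once; this is what keeps the induction from stalling.

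For the inductive step I would assume $\partial\delta^{n^{k-1}}\equiv 0$ and expand $\partial\delta^{n^k}(\xi_1,\xi_2)$ using: the recursion (\ref{hierad}) expressing $\ad^{n^k}_\xi$ through $\ad^{n}_\xi$ (equivalently the bracket recursion (\ref{nk})); the anti-automorphism property (\ref{phii}) to turn the induced action on $\wedge^2\g$ into $\iota$-operators; the recursion (\ref{Dnii}) for the iterated deformed cochain $\delta_{(^tn)^k}$; and the commutation identity $\ads^*_{\,n^k\xi}=\ads^*_{\,\xi}\circ({}^tn)^k$ of Proposition \ref{general.com}, equation (\ref{rel11n}), which is precisely what lets the order-$(k-1)$ Nijenhuis twist be factored through the differential. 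The terms obstructing an immediate collapse are the concomitants $C(\delta,n^{k-1})$ and $C(\delta,n^k)$ together with the cyclic combination of (\ref{rel111}); all of these vanish under our hypotheses --- the concomitants by Corollary \ref{hire} and the assumption that ${}^tn$ is $\ad^*$-equivariant on $(\g^*,[\cdot,\cdot]_{\g^*})$, that is (\ref{ad-equ}), and (\ref{rel111}) because $(\g,[\cdot,\cdot],\delta,n)$ is an NL bialgebra. After these cancellations the remaining terms should regroup, via the same recursions, into $\iota_{({}^tn)^{k-1}}$ applied to $\partial\delta^{n}(\xi_1,\xi_2)$ and a multiple of $\partial\delta^{n^{k-1}}(\xi_1,\xi_2)$, both zero by the base case and the inductive hypothesis; hence $\partial\delta^{n^k}\equiv 0$, closing the induction.

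I expect this inductive bookkeeping to be the main obstacle: three deformed objects at level $k$ --- the Lie bracket $[\cdot,\cdot]_{n^k}$, the adjoint representation $\ad^{n^k}$ on $\g$, and the cochain $\delta_{(^tn)^k}$ --- must be shown to descend compatibly to level $k-1$, so that the single cocycle hypothesis $\partial\delta^{n}\equiv 0$ propagates up the entire tower. The algebraic lever that makes this work is the identity $\ads^*_{\,n^k\xi}=\ads^*_{\,\xi}\circ({}^tn)^k$, which itself rests on $\lcf n,n\rcf\equiv 0$ and $\lcf{}^tn,{}^tn\rcf\equiv 0$; and the $\ad^*$-equivariance hypothesis is indispensable precisely because it kills the obstructing concomitants $C(\delta,n^k)$ (cf.\ Corollaries \ref{equivad} and \ref{hire}), without which the hierarchy would break off after finitely many steps. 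Once $\partial\delta^{n^k}\equiv 0$ is established for all $k$, Corollary \ref{doubledef} yields $\partial\delta^{n^i}_{(^tn)^j}\equiv 0$ for all $i,j$, and together with (a) and (c) this exhibits the compatible hierarchy of Lie bialgebras $(\g,[\cdot,\cdot]_{n^i},\delta^{n^i}_{(^tn)^j})$ claimed in the theorem.
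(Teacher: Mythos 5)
Your proposal is correct and follows essentially the same route as the paper: the cocycle condition is reduced via Corollary \ref{doubledef} to $\partial\delta^{n^k}\equiv 0$, which is then proved by induction using the recursions (\ref{nk}), (\ref{hierad}) and (\ref{Dnii}), the identity of Proposition \ref{general.com}, Proposition \ref{prop2}, and the vanishing of the concomitants $C(\delta,n^k)$ guaranteed by the $\ad^*$-equivariance hypothesis through Corollary \ref{hire}, with the Kosmann-Schwarzbach--Magri hierarchy supplying the Nijenhuis and compatibility statements for the deformed brackets. Your slightly more explicit treatment of pairwise compatibility and of the Lie-bracket property on $\g^*$ is a harmless elaboration of what the paper leaves implicit.
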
	

	\begin{proof}
The deformed $1$-cochains $\delta_{(^tn)^j}:{\mathfrak g}\to \wedge^2{\mathfrak g}$ by means of $({}^tn)^j$, are  $1$-cocycles in the cohomology induced by the representation $\ad^{(2)}:  {\mathfrak g} \times \wedge^2{\mathfrak g} \to \wedge^2{\mathfrak g}$ of the Lie algebra $({\mathfrak g}, [\cdot,\cdot])$ if and only if   $\d^{(^tn)^{j}}=\iota_{^tn}\circ \d^{(^tn)^{j-1}}-\d^{(^tn)^{j-1}}\circ n$
		are  $1$-cocycles. 
        The relation between the deformed cochains in the hierarchy is given by
\[
\delta_{(^tn)^j}(\xi)(\eta_1,\eta_2)=\delta_{(^tn)^{j-1}}(\xi)(^tn\eta_1,\eta_2)+\delta_{(^tn)^{j-1}}(\xi)(\eta_1,^tn\eta_2)-\delta_{(^tn)^{j-1}}(n\xi)(\eta_1,\eta_2)\,, \]
for all  $\eta_i\in {\mathfrak g}$ and $\xi \in {\mathfrak g}
.$	To prove the theorem, we will show the following for $k=i+j$, $k\geq 2$: 
		\begin{enumerate}
			\item  $n^{k}$ is a Nijenhuis tensor with respect to $[\cdot,\cdot]_{n^{k-1}}$\,.
            \item $C(\delta,n^{k-1})\equiv 0\,.$	
			\item  $\iota_{^tn}\circ \d^{(^tn)^{j-1}}-\d^{(^tn)^{j-1}}\circ n:\mathfrak g\to \wedge^2\mathfrak g$ is a $1$-cocycle for the adjoint representation of $({\mathfrak g}. [\cdot,\cdot]_{n^{i}})\,.$
		\end{enumerate}
		
		\noindent (1) It is a consequences of (\ref{hinnt}).  (2) The vanishing of the concomitant is preserved by the hierarchy, as implied by the Corollary \ref{hire}.
		
	\noindent $(3)$ 	Suppose that $\partial \delta ^{n^{k-1}}=\partial \delta ^{n^{k-2}}\equiv 0$. Using (\ref{hierad}), we have
		\[
	(\ad_{\xi}^{n^i})^*=[{}^tn^{i-1},\ads^*_{\xi}]+[{}^tn^{i-1},\ad^*_{n\xi}]+\ads^*_{n^{i-1}\xi}+\ad^*_{n^i\xi}\,.
	\]
Now, we will substitute the coadjoint representation mentioned above and
	\[
\delta[\xi_1,\xi_2]_{n^k}=\delta[n\xi_1,\xi_2]_{{n^{k-1}}}+\delta[\xi_1,n\xi_2]_{{n^{k-1}}}-\delta[n\xi_1,n\xi_2]_{{n^{k-2}}}
	\] 
in 
$\partial\delta^{n^k}[\xi_1,\xi_2]=\iota_{(\ad_{\xi_1}^{n^k})^*}\delta(\xi_2)-\iota_{(\ad_{\xi_2}^{n^k})^*}\delta(\xi_1)-\delta[\xi_1,\xi_2]_{n^k}$. Using Propositions \ref{general.com}, \ref{prop2} and (2), we get $\partial \delta^{n^k}\equiv 0$.
Finally by Proposition \ref{doubledef}, we arrive at (2).
	\end{proof}

 	Another class of NL bialgebras are those for which the operator $n:\g\to\g$ is $ad$-equivariant, i.e.
	$$n\circ \ad_{\xi}=\ad_{\xi}\circ n,\quad \forall \xi \in \mathfrak g.$$
	\begin{corollary}
		Let $(\g,[\cdot,\cdot], \delta,n)$ be an almost NL bialgebra and let $n:\g\to\g$ be an $ad$-equivariant operator. If the Nijenhuis torsion of the transpose map $^tn:\g^*\to\g^*$ vanishes, then $(\g,[\cdot,\cdot], \delta,n)$ forms an NL bialgebra that generates a hierarchy of Lie bialgebras.
	\end{corollary}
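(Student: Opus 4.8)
The plan is to use the $\ad$-equivariance of $n$ first to upgrade $n$ to a genuine Nijenhuis operator and to make the operators $\ads_{\,\xi}=[\ad_\xi,n]$ and $\ads^*_{\,\xi}=[{}^tn,\ad^*_\xi]$ vanish identically; once that is done, the three defining conditions of an NL bialgebra come almost for free, and the hierarchy follows from Theorem \ref{main}.

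First I would observe that $n\circ\ad_\xi=\ad_\xi\circ n$ is literally $\ads_{\,\xi}\equiv 0$ on $\g$, and that transposing it gives $\ads^*_{\,\xi}\equiv 0$ on $\g^*$ (with $\ad^*_{n\xi}=\ad^*_\xi\circ{}^tn={}^tn\circ\ad^*_\xi$ as a by-product). Using $[n\xi_1,\xi_2]=[\xi_1,n\xi_2]=n[\xi_1,\xi_2]$, the deformed bracket (\ref{defn}) collapses to $[\xi_1,\xi_2]_n=n[\xi_1,\xi_2]$, and then (\ref{bracket--n}) gives $\lcf n,n\rcf\equiv 0$, so $n$ is Nijenhuis; also $C(n,\ad)\equiv 0$ is immediate since every summand of (\ref{com1}) carries a factor $[\ad_\bullet,n]$.

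Next I would read off the NL-bialgebra axioms. Axiom (1), $\partial\delta^n\equiv 0$, is exactly part (i) of the almost-NL-bialgebra hypothesis. Axiom (2), $\lcf{}^tn,{}^tn\rcf\equiv 0$, is the assumed vanishing of the Nijenhuis torsion of the transpose map. For axiom (3) I would invoke Theorem \ref{nnt}: the obstruction $C(\delta,n)$ is the identity (\ref{com3}), both sides of which are assembled only from $\ads^*_{\,\xi_1}$ and $\ads^*_{\,\xi_2}$, hence vanish. Thus $(\g,[\cdot,\cdot],\delta,n)$ is an NL bialgebra.

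For the hierarchy I would apply Theorem \ref{main}, whose remaining hypothesis is that ${}^tn$ be $\ad^*$-equivariant on $(\g^*,[\cdot,\cdot]_{\g^*})$. Since $\ads^*_{\,\xi}\equiv 0$, Proposition \ref{general.com} yields $\ads^*_{\,n^k\xi}=\ads^*_{\,\xi}\circ({}^tn)^k\equiv 0$ for every $k$, so every term of the concomitant $C(\delta,n^k)$ in (\ref{rel112}) vanishes, which by Corollary \ref{hire} is precisely the required $\ad^*$-equivariance; Theorem \ref{main} then gives the compatible hierarchy $(\g,[\cdot,\cdot]_{n^i},\delta^{n^i}_{({}^tn)^j})$ for all $i,j\geq 0$. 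The one genuine computation is the reduction $[\xi_1,\xi_2]_n=n[\xi_1,\xi_2]$ and the ensuing vanishing of $\lcf n,n\rcf$; otherwise the proof is bookkeeping against earlier results, and the only point needing care is to keep apart the two ambient notions of equivariance --- $\ad$-equivariance of $n$ on $\g$ versus $\ad^*$-equivariance of ${}^tn$ on $\g^*$ --- and to verify, as above, that here the former does force the latter.
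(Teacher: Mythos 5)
Your proposal is correct and follows essentially the same route as the paper: $\ad$-equivariance kills $\ads_{\,\xi}$ and hence $\ads^*_{\,\xi}$, so every term of the concomitants $C(\delta,n)$ and $C(\delta,n^k)$ in (\ref{com3}) and (\ref{rel112}) vanishes, and the hierarchy then comes from Corollary \ref{hire} together with Theorem \ref{main}, exactly as in the paper's two-line argument. Your additional observation that $\ad$-equivariance forces $[\xi_1,\xi_2]_n=n[\xi_1,\xi_2]$ and hence $\lcf n,n\rcf\equiv 0$, so that $n$ is genuinely Nijenhuis as the NL-bialgebra definition requires, is a correct (and welcome) detail that the paper leaves implicit.
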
	 
	\begin{proof}
	If $n$ is $ad$-equivariant, then $\ads^*_{\xi}=0$ for all $\xi\in\g$. Consequently, $C(\delta,n)\equiv 0$ and $C(\delta,n^k)\equiv 0$ as well.
	\end{proof}
	\begin{example} 
		Let $\g$ be the direct sum of
		two copies of the solvable Lie algebra of dimension $2$. The nonzero Lie brackets are given by
		$$[X_1,X_2]=X_2, \quad [X_3,X_4]=X_4\,.$$
				\noindent The linear map 	
		$$ \delta(X_1)=0,\quad \delta (X_2)=2X_1\wedge X_2, \quad \delta(X_3)=X_3\wedge X_4, \quad \delta(X_4)=0\,,$$
		is a $1$-cocycle on $\g$ and $[\cdot,\cdot]_{\g^*}:=\delta^t$ defines a Lie bracket on $\g^*$ by
		\[
		[X^1,X^2]_{\g^*}=2X^2,\quad [X^3,X^4]_{\g^*}=X^3\,.
		\]		 
		The following operator  
		$$n(X_1)=X_1,\quad n(X_2)=X_2,\quad n(X_3)=0,\quad n(X_4)=0$$
		 is Nijenhuis on $\g$  and one can see that $(\mathfrak g,[\cdot,\cdot],\delta,n)$ forms an almost NL bialgebra .
        
 \noindent       Since $n$ is $ad$-equivariant, $C(\delta,n^k)\equiv 0$. Furthermore, $\lcf ^tn,^tn\rcf\equiv 0$. Consequently, NL bialgebra $(\mathfrak g,[\cdot,\cdot],\delta,n)$ generates a hierarchy of Lie bialgebras.
	\end{example}
\smallskip
 	\noindent In what follows, we show that coboundary almost NL bialgebras are examples of NL bialgebras.
	
	\begin{corollary}\label{cobocom}
		Let $r\in \wedge^2{\mathfrak g}$ be a non-degenerate solution of the classical Yang-Baxter equation of the Lie algebra ${\mathfrak g}$ and $n: \mathfrak g\to  \mathfrak g$ be a Nijenhuis operator such that
		\begin{enumerate}
			\item [(i)] $n\circ \rs=\rs\circ {}^tn\,$
			\item [(ii)] $C(r,n)(\eta_1,\eta_2)=\ads^*_{\,\rs\eta_2}\eta_1-\ads^*_{\,\rs\eta_1}\eta_2=0,\quad \forall \eta_i\in\g^*\,.$ 
		\end{enumerate}
		
		Then, $(\g,[\cdot,\cdot],\delta_r,n)$ is a coboundary NL bialgebra and it generates a hierarchy of bialgebras. 
	\end{corollary}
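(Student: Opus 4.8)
The plan is to start from Proposition~\ref{nrr}, which under hypotheses (i)--(ii) already gives that $(\g,[\cdot,\cdot],\delta_r,n)$ is an \emph{almost} NL bialgebra; in particular condition~(1) of the definition of NL bialgebra, namely $\partial\delta_r^n\equiv 0$, is in hand, as is the compatibility $C({}^tn,\ad^*)\equiv 0$ that makes $[\cdot,\cdot]^{{}^tn}$ a Lie bracket on $\g^*$. It then remains to verify the two remaining NL conditions, (2) $\lcf{}^tn,{}^tn\rcf\equiv 0$ and (3) $C(\delta_r,n)\equiv 0$, and finally to exhibit the hierarchy.

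For~(2) I would exploit the non-degeneracy of $r$: the map $\rs\colon\g^*\to\g$ is then a linear isomorphism, by Proposition~\ref{r} it satisfies $\rs[\eta_1,\eta_2]_{\g^*}=-[\rs\eta_1,\rs\eta_2]$ (so it is a Lie-algebra anti-isomorphism onto $(\g,[\cdot,\cdot])$), and hypothesis~(i) says exactly that ${}^tn=\rs^{-1}\circ n\circ\rs$. Feeding these three facts into the definition of the Nijenhuis torsion gives, after a short computation, the identity
\[
\rs\bigl(\lcf{}^tn,{}^tn\rcf(\eta_1,\eta_2)\bigr)=-\lcf n,n\rcf(\rs\eta_1,\rs\eta_2),\qquad \eta_1,\eta_2\in\g^*.
\]
Since $n$ is Nijenhuis the right-hand side vanishes, and since $\rs$ is injective we conclude $\lcf{}^tn,{}^tn\rcf\equiv 0$.

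For~(3) I would route the argument through the machinery of Sections~\ref{sec3}--\ref{sec4} rather than expanding~(\ref{com3}) for $\delta_r$ directly. First I check that the pair $(nr,n)$ again satisfies the hypotheses of Proposition~\ref{nr}: hypothesis~(i) for $nr$ reads $n\circ(n\rs)=(n\rs)\circ{}^tn$, which is immediate from $\rs\circ{}^tn=n\circ\rs$; and hypothesis~(ii) for $nr$ reads $\ads^*_{\,n\rs\eta_2}\eta_1-\ads^*_{\,n\rs\eta_1}\eta_2=0$, which follows from Lemma~\ref{lemma.rel1} ($\ads^*_{\,n\xi}=\ads^*_{\,\xi}\circ{}^tn$, valid because $n$ is Nijenhuis), hypothesis~(i), and the symmetry $\ads^*_{\,\rs\eta_1}\eta_2=\ads^*_{\,\rs\eta_2}\eta_1$ encoded in hypothesis~(ii) for $r$. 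Proposition~\ref{nrr} applied to $nr$ then yields that $(\g,[\cdot,\cdot],\delta_{nr},n)$ is an almost NL bialgebra, so $\partial\delta_{nr}^n\equiv 0$; but by~(\ref{nm-r}) one has $\delta_{nr}=\iota_{{}^tn}\delta_r-\delta_r\circ n$, so $\delta_{nr}^n$ is precisely the double-deformed cochain $\dsn_{\,^tn}$ attached to $\delta_r$, and by Theorem~\ref{nnt} its being a $1$-cocycle in the $[\cdot,\cdot]_n$-deformed cohomology is equivalent to~(\ref{com3}), i.e. to $C(\delta_r,n)\equiv 0$. With~(1), (2), (3) established and $n$ a genuine Nijenhuis operator, $(\g,[\cdot,\cdot],\delta_r,n)$ is a coboundary NL bialgebra.

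For the hierarchy I would iterate the previous step. By induction each pair $(n^k r,n)$ satisfies the hypotheses of Proposition~\ref{nr} --- the verification of~(i), (ii) is the same as for $nr$, now using Proposition~\ref{general.com} in place of Lemma~\ref{lemma.rel1} --- so Proposition~\ref{nr} together with $\lcf n,n\rcf\equiv 0$ shows each $n^k r$ is again a solution of the classical Yang--Baxter equation, and iterating~(\ref{nm-r}) gives $\delta_{({}^tn)^k}=\delta_{n^k r}$. In particular every $\delta_{({}^tn)^k}$ is a coboundary, hence $\partial\delta_{({}^tn)^k}\equiv 0$; Corollary~\ref{doubledef} then forces $\partial(\delta_r)^{n^i}_{({}^tn)^j}\equiv 0$ for all $i+j=k$. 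Since $n$ (resp.~${}^tn$) is Nijenhuis the brackets $[\cdot,\cdot]_{n^i}$ on $\g$ are Lie brackets, pairwise compatible in the sense of Proposition~\ref{Lie} (\cite{KoMa}), and the transpose of $(\delta_r)^{n^i}_{({}^tn)^j}$ is the bracket $[\cdot,\cdot]_{n^j r}=[\cdot,\cdot]_r^{({}^tn)^j}$ on $\g^*$, which is a Lie bracket because $n^j r$ is an $r$-matrix; therefore $\bigl(\g,[\cdot,\cdot]_{n^i},(\delta_r)^{n^i}_{({}^tn)^j}\bigr)$, $i,j\ge 0$, is a compatible hierarchy of Lie bialgebras. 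The step I expect to require the most care is~(3), specifically the identification of $\delta_{nr}^n$ with the double-deformed cochain $\dsn_{\,^tn}$ figuring in Theorem~\ref{nnt}, and --- in the hierarchy --- keeping the three families (the brackets $[\cdot,\cdot]_{n^i}$ on $\g$, the iterated $r$-matrices $n^j r$ on $\g^*$, and the double-deformed cocycles) mutually consistent.
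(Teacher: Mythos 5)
Your proposal is correct, and its skeleton (iterate the $r$-matrices $n^k r$, use non-degeneracy of $r$ to transfer the Nijenhuis property to ${}^tn$ on $(\g^*,[\cdot,\cdot]_r)$, and read the hierarchy off the coboundary cocycles $\delta_{n^kr}=\delta_{({}^tn)^k}$) is the same as the paper's; the genuine divergence is in how $C(\delta_r,n)\equiv 0$ is obtained. The paper proves it by direct computation, exhibiting an identity that writes $C(\delta_r,n)(\eta_1,\eta_2)([\xi_1,\xi_2])$ as a combination of terms of the form $C(r,n)(\ads^*_{\xi_i}\eta_a,\eta_b)(\xi_j)$, and then shows $C(n^k\rs,n)=\tfrac12\,\iota_{{}^tn^k}C(r,n)$ so that the vanishing propagates along the hierarchy. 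You instead check that the pair $(nr,n)$ again satisfies hypotheses (i)--(ii) (your verification via Lemma \ref{lemma.rel1}, (i) and the symmetry in (ii) is correct), apply Proposition \ref{nrr} to $nr$ to get $\partial\delta_{nr}^{\,n}\equiv 0$, identify $\delta_{nr}$ with $(\delta_r)_{{}^tn}$ through (\ref{nm-r}), and invoke the equivalence of Theorem \ref{nnt} to land on (\ref{com3}); this spares you the explicit concomitant computation at the price of leaning on Theorem \ref{nnt} and on the identification of $\delta_{nr}^{\,n}$ with the double-deformed cochain, both of which you justify. For the higher steps the paper goes through Proposition \ref{Propositionequiv.n} and the propagation of $C(r,n)=0$, while you check (i)--(ii) for $(n^kr,n)$ with Proposition \ref{general.com} and then use Corollary \ref{doubledef} (legitimately, since the NL conditions are already in hand) to obtain all the cocycles $(\delta_r)^{n^i}_{({}^tn)^j}$; the two routes yield the same compatible family $\bigl(\g,[\cdot,\cdot]_{n^i},(\delta_r)^{n^i}_{({}^tn)^j}\bigr)$. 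Finally, your derivation of $\lcf{}^tn,{}^tn\rcf\equiv 0$ from ${}^tn=\rs^{-1}\circ n\circ\rs$ and the anti-homomorphism property in Proposition \ref{r} is a mild streamlining of the paper's argument, which extracts the same conclusion from the $r$-matrix identity for $n\rs$; both hinge on non-degeneracy in the same way.
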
 
	\begin{proof}
		The proof is consequence of the fact that if the pair $( r, n)$ satisfies the compatibility conditions $(i)$ and $(ii)$ then $n\rs\in \wedge ^2\g$ is again a solution of the classical Yang-Baxter equation and so $r_k=n^k\rs$ for $k\in \mathbb N$, are $r$-matrices \cite{RaReHa}. Therefore $\delta_{n^kr}$ are $1$-cocycles of the coboundary Lie bialgebras $	(\g,[\cdot,\cdot],\delta_{nr})$, and then according to the Proposition \ref{Propositionequiv.n} all pairs $	((\g,[\cdot,\cdot]_{n^i}),(\g^*,[\cdot,\cdot]^{n^jr}))$ are Lie bialgebras but not necessarily coboundary. The coboundary is just guaranteed for the Lie bialgebras of the deformed $1$-cocycles $\delta_{n^kr}$ by means of $(^tn)^k$. On the other hand, $n\rs$ is $r$-matrix means
		\[
		n\rs[\eta_1,\eta_2]^{n\rs}=[n\rs\eta_1,n\rs\eta_2]\,,
		\]
		then, from the compatibility conditions, we have
		\[
		n\rs[\eta_1,\eta_2]^{\rs}_{^tn}=n\rs([^tn\eta_1,\eta_2]^{\rs}+[\eta_1,{}^tn\eta_2]^{\rs}-{}^tn[\eta_1,\eta_2]^{\rs})=\rs[^tn\eta_1,^tn\eta_2]^r
		\]	
		and therefore
		$
	{}^tn({}[^tn\eta_1,\eta_2]^{\rs}+[\eta_1,{}^tn\eta_2]^{\rs}-{}^tn[\eta_1,\eta_2]^{\rs})=[{}^tn\eta_1,{}^tn\eta_2]^r\,,$ which implies $\lcf {}^tn,{}^tn\rcf^r =0$ and so $^tn$ is a Nijenhuis operator on the dual Lie algebra $(\g^*,[\cdot,\cdot]_r)$ of the Lie algebra $(\g,[\cdot,\cdot])$.
		
		\noindent 		Note that in this case, by (\ref{deltar}) and using a straightforward computation and the skew-symmetric property of $r$-matrix $r$, we have the following relation between the concomitant of $C(\delta_r,n)$ and the concomitant of $C(r,n)$
		\[
		\begin{array}{rcl}
			C(\delta_r,n)(\eta_1,\eta_2)([\xi_1,\xi_2])&=&C(r,n)(\ads^*_{\xi_1}\eta_1,\eta_2)(\xi_2)-C(r,n)(\eta_1,\ads^*_{\xi_1}\eta_2)(\xi_2)\\[3pt]
			&-&C(r,n)(\ads^*_{\xi_2}\eta_1,\eta_2)(\xi_1)+C(r,n)(\eta_1,\ads^*_{\xi_2}\eta_2)(\xi_1)\,.\\
		\end{array}	
		\]
	On the other hand, if we replace $\rs$ by $n^k\rs$ in (\ref{conr}), and then using the fact that $\ads^*_{n^k\xi}=\ads^*_{\xi}\circ{}^tn^k$, we get 
$
C(n^k\rs,n)(\eta_1,\eta_2)=\frac{1}{2}	\iota_{{}^tn^k}C(r,n)(\eta_1,\eta_2)\,.
	$
	It shows that the vanishing of concomitant is preserved during a hierarchy for the coboundary NL bialgebras.	
			\end{proof}
	\begin{example}
		Consider a Lie algebra on $\R^4$ whose Lie bracket is characterized by
		$$[X_1,X_4]=X_1,\quad [X_3,X_4]=X_2\,.$$
		There is a Nijenhuis structure $n$ on $(\g,[\cdot,\cdot])$ given by
		$$n(X_1)=X_1,\quad n(X_2)=X_1+X_2,\quad n(X_3)=X_1+X_3,\quad n(X_4)=X_2-X_3+X_4\,,$$
		which defines the deformed Lie bracket
		\[
		[X_1,X_4]_n=X_1,\quad [X_2,X_4]_n=X_1,\quad [X_3,X_4]_n=X_2\,.
		\]
	Note that $n$ is not $\ad$-equivariant as
$n[X_3,X_4]\neq[X_3,nX_4] \,.$
		\noindent There is an $r$-matrix $r\in\wedge ^2\g$ on $\g$ 
		$$r=X_2 \wedge X_3-X_1\wedge X_4\,.$$
		\noindent 
		Thus $(\mathfrak g,[\cdot,\cdot],\delta_r)$ defined by $r$ is a Lie bialgebra, and the non-zero brackets are
		\[
		[X^1,X^2]_r=-X^3,\quad [X^1,X^4]_r=X^4\,.
		\]
        	The dual map $^tn$ is defined by
		\[
		{}^tn(X^1)=X^1+X^2+X^3,\quad {}^tn(X^2)=X^2+X^4,\quad {}^tn(X^3)=X^3-X^4,\quad {}^tn(X^4)=X^4\,.
		\]
		One can see that $n\circ \rs=\rs\circ {}^tn\,$, and $[\cdot,\cdot]_{r}^{{}^tn}=[\cdot,\cdot]_{nr}$ and so $C(r,n)\equiv 0$. Thus, $n\rs$ is also an $r$-matrix, reads as
	$n\rs=-X_1 \wedge X_2+X_1\wedge X_3-X_1\wedge X_4+X_2\wedge X_3\,.$ 
   It defines the Lie bracket
 \[
		[X^1,X^2]_{nr}=-X^3,\quad [X^1,X^3]_{nr}=-X^4,\quad [X^1,X^4]_{nr}=X^4\,.
		\]
\noindent The dual map ${{}^tn}$ is a Nijenhuis structure on the dual Lie algebra $(\g^*,[\cdot,\cdot]_r)$, and according  to the Proposition \ref{cobocom}, $(\mathfrak g,[\cdot,\cdot],\delta_r,n)$ is a coboundary NL bialgebra and it generates a hierarchy of Lie bialgebras. Note that the dual map ${}^tn$ is not $\ad^*$-equivariant as
		${}^tn[X^1,X^3]_r\neq [X^1,{}^tnX^3]_r\,.$
	\end{example}
	\subsection{Weak NL bialgebra on Euler-top system}\label{sec6}
	In the case of weak NL bialgebras, the existence of a hierarchy of iterated Lie brackets, 
$1$-cocycles, and subsequently, Lie bialgebras, is not guaranteed. In the following, we illustrate this situation using a weak NL bialgebra that serves as the underlying algebraic structure of a well-known dynamical system.

	\noindent 	We consider a three dimensional Lie bialgebra, with the commutators
	\[
	[X_1,X_2]= -X_2,\quad [X_1,X_3]=- X_3, \quad
	[X_2,X_3]= 0,\footnote{$(\mathfrak g,[\cdot,\cdot])$ is the book Lie algebra.}
	\]
	\[
	[X^1,X^2]_{g^*}=-X^3,\quad [X^1,X^3]_{g^*}= X^2,\quad [X^2,X^3]_{g^*}=-X^1,\footnote{$(\mathfrak g^*,[\cdot,\cdot]_{\g^*})$ is the Lie algebra $\so(3)$.}
	\]	
	the corresponding $1$-cocycle is $$\delta(X_1) =-X_2\wedge X_3,\quad  \delta(X_2)=  X_1\wedge X_3, \quad
	\delta(X_3) = -X_1\wedge X_2.$$
	\noindent The non-zeros coadjoint representation on the dual Lie algebra $\g^*$ of the Lie algebra $(\g,[\cdot,\cdot])$ are
	\[
	\begin{array}{rclrclrclrcl}
		&&	{\ad_{X_1}^*}X^2=X^2,\quad && 	{\ad_{X_1}^*}X^3=X^3,\quad && 	{\ad_{X_2}^*}X^2=-X^1,\quad 	&&{\ad_{X_3}^*}X^3=-X^1\,.\\[4pt]
	\end{array}		
	\]
	\noindent 	 There is a Nijenhuis operator $n:\g\to \g$ characterized by
	\[
	n(X_1)=X_3,\quad n(X_2)=X_2,\quad n(X_3)=-X_1-X_2+X_3.
	\]
	which defines a deformed Lie bracket
	\[
	[X_1,X_3]_n=-X_1,\quad [X_2,X_3]_n=-X_2.
	\]
	One can see that the $1$-cocycle $\delta$ is also a $1$-cocycle (denoted by $\delta^n$) in the deformed cohomology induced by $(\ad^n)^{(2)}$, on the Lie algebra $(\g,[\cdot,\cdot]_n)$. It defines a new Lie bialgebra with pair $((\g,[\cdot,\cdot]_n),(\g^*,[\cdot,\cdot]_{\g^*}))$. The non-zero elements of the coadjoint representation on the dual Lie algebra $\g^*$ of the Lie algebra $(\g,[\cdot,\cdot]_n)$ are as follows:
	\[
	\begin{array}{rclrclrcl}
		&&	(\ad^n_{X_1})^*X^1=X^3,\quad && 	(\ad^n_{X_2})^*X^2=X^3,\quad 	&&(\ad^n_{X_3})^*X^1=-X^1,\quad (\ad^n_{X_3})^*X^2=-X^2\,.\\[4pt]
	\end{array}		
	\]
	\noindent	The transpose map $^tn:\g\to\g$ where
	$
	^tn(X^1)=-X^3$, $^tn(X^2)=X^2-X^3$,	$^tn(X^3)=X^1+X^3$,
	defines a Lie bracket on $\g^*$ by
	\[
	[X^1,X^2]^{^tn}=-X^2,\quad [X^1,X^3]^{^tn}= X^3,\quad [X^2,X^3]^{^tn}=-2X^1\,.
	\footnote{$(\mathfrak g^*,[\cdot,\cdot]^{^tn})$ is the Lie algebra $\sl(2, \mathbb{R})$.}
	\]
	\noindent   Therefore, by definition, $(\g,[\cdot,\cdot],\delta,n)$, to this point, is an almost NL bialgebra. This structure serves as the underlying algebraic framework for a well-known dynamical system, particular case of the ${\mathfrak so}(3)$ Euler-top \cite{BaMaRa, We},
	\begin{equation}\label{system 3}
		\begin{array}{rcl}
			\dot{x_1}&=&x_2^2-x_3^2,\\
			\dot{x_2}&=& x_1(2x_3-x_2)\\
			\dot{x_3}&=&x_1(x_3-2x_2)\,.
		\end{array}
	\end{equation}
	\noindent This system is bi-Hamiltonian\footnote{A dynamical system is a Poisson bi-Hamiltonian if there exist two compatible Poisson structures whose Hamiltonian vector fields coincide.} with respect to linear Poisson structures
	\[
	\begin{array}{rclrclrcl}
		&&	\{x_1,x_2\}=-x_3,\quad  &&\{x_1,x_3\} =   x_2, \quad  && \{x_2,x_3\}=-x_1,\\[3pt]
		&&	\{x_1,x_2\}_{^tn}=-x_2, \quad  &&\{x_1,x_3\}_{^tn} = x_3, \quad && \{x_2,x_3\}_{^tn} =-2x_1.
	\end{array}
	\]
	These Linear Poisson structures $\{\cdot,\cdot\}$ and  $\{\cdot,\cdot\}_{^tn}$ correspond to the Lie algebras $[\cdot,\cdot]_{\g^*}$ and  $[\cdot,\cdot]^{^tn}$.
	
	\noindent By Proposition \ref{prop-equiv}, the deformation of $1$-cocycle $\delta$ by means of $^tn$ given by
	\[
	\delta_{^tn}(X_1)=-2X_2\wedge X_3,\quad \delta_{^tn}(X_2)=-X_1\wedge X_2,\quad \delta_{^tn}(X_3)=X_1\wedge X_3\,,\quad 
	\]
 is also a $1$-cocycle in the cohomology defined by the initial Lie algebra. Here $^tn$ is not a Nijenhuis operator on $\g^*$ but the Nijenhuis torsion $\lcf ^tn,^tn\rcf$ is a $2$-cocycle in the cohomology induced by the adjoint representation  of $(\mathfrak g,[\cdot,\cdot]$). Theretofore, we have a new Lie bialgebra $((\g,[\cdot,\cdot]),(\g^*,[\cdot,\cdot]^{^tn}))$, where $[\cdot,\cdot]^{{}^tn}:=\delta_{{}^tn}
	(\cdot,\cdot)$. One can see that $n$ is $\ad^{(2)}$-equivariant, that is $C(\delta,n)\equiv 0$, where
	\[
	\begin{array}{rclrclrc}
		&&\ads^*_{\,X_1}X^1=X^3,\quad
		&&\ads^*_{\,X_2}X^1=0,\quad
		&&\ads^*_{\,X_3}X^1=-X^1,	\\[3pt]
		
		&&\ads^*_{\,X_1}X^2=0,\quad
		&&\quad \quad \quad \ads^*_{\,X_2}X^2=X^1+X^3,\quad
		&&\ads^*_{\,X_3}X^2=-X^1,		
		\\[3pt]
		&&\ads^*_{\,X_1}X^3=X^1,\quad 
		&&\ads^*_{\,X_2}X^3=0, \quad &&\quad \ads^*_{\,X_3}X^3=X^1+X^3\,.\\[3pt]
	\end{array}
	\] 
 Or equivalently, according to the Theorem \ref{nnt}, one can check $\delta_{^tn}$ is a  $1$-cocycle in the deformed cohomology induced by $(\ad^n)^{(2)}:\g\times \wedge ^2\g\to  \wedge ^2\g $ on the Lie algebra $(\g,[\cdot,\cdot]_n)$. This establishes $(\mathfrak g,[\cdot,\cdot],\delta,n)$ as a weak NL bialgebra. Therefore, there is no guarantee for having a hierarchy of deformed Lie bialgebras by means of $(n_i,(^tn)^j)$. However, since $C(\delta,n)\equiv 0$, it implies that the double deformation $\delta_{^tn}^n$ is also $1$-cocycle. This allows us to construct a new Lie bialgebra  $((\mathfrak g,[\cdot,\cdot]_n),(\mathfrak g^*,[\cdot,\cdot]^{^tn}))$. Furthermore, according to Corollary \ref{doubledef}, $\delta_{{}^tn^2}$ and $\delta^{n^2}$ are also $1$-cocycles.
	
	\noindent Note that the recursive relations (\ref{nk}) and (\ref{Dnii}) do not hold for the deformed Lie brackets; however, the deformed bracket  
	\[	[X^1,X^2]^{{}^tn^2}=-2X^2,\quad [X^1,X^3]^{{}^tn^2}=X^1-2X^2+2X^3,\quad [X^2,X^3]^{{}^tn^2}=-2X^1-X^2\,,
	\]
    defined by 
    ${}^tn^2(X^1)=-X^1-X^3,\quad {}^tn^2(X^2)=-X^1+X^2-2X^3,\quad {}^tn^2(X^3)=X^1\,,	$	 
	is a Lie bracket on $\g^*$. The deformed $1$-cochain
	\[
	\begin{array}{rcl}			\delta_{{}^tn^2}(X_1)&=& X_1\wedge X_3-2X_2\wedge X_3,\\		\delta_{{}^tn^2}(X_2)&=& -2X_1\wedge X_2-2X_1\wedge X_3-X_2\wedge X_3,\\		\delta_{{}^tn^2}(X_3)&=& 2X_1\wedge X_3\,,		\end{array}	\]
	is a $1$-cocycle with respect to the adjoint representation of the Lie bracket $[\cdot,\cdot]$. Then the pairs $((\g,[\cdot,\cdot]_{n^2}),(\g^*,[\cdot,\cdot]))$ and $((\g,[\cdot,\cdot]),(\g^*,[\cdot,\cdot]^{{}^tn^2}))$ are also Lie bialgebras.

	\section{Concluding Remarks}
	
		In this paper we applied the theory of Poisson-Nijenhuis structures to Lie bialgebras. By examining the roles of Nijenhuis operators and the classical Yang-Baxter equation, we uncovered new insights into the hierarchical structures of Lie bialgebras. A Lie bialgebra $(\g,\g^*)$ equipped with a Nijenhuis structure $n$ on $\g$ that satisfies specific compatibility conditions is termed a (weak) NL bialgebra. We have meticulously investigated the necessary conditions for establishing a compatible hierarchy of Lie bialgebras. NL bialgebras provide profound insights into the underlying algebraic frameworks of certain dynamical systems. 
We illustrated this concept using the Euler-top system, demonstrating how its underlying algebraic structure constitutes a weak NL bialgebra. 

\noindent  The significance and utility of (weak) NL bialgebras can be observed in the following contexts:
\subsection{NL bialgebras and Poisson Lie groups}	There is a one-to-one correspondence between Lie bialgebras on a Lie algebra $\g$ and Poisson-Lie groups on the corresponding connected simply-connected Lie group $G$. Recall that Poisson-Lie groups are Lie groups equipped with a multiplicative Poisson structure i.e. the multiplication is a Poisson epimorphism. If $(\g, \g^*)$ is a Lie bialgebra and $G$ is the corresponding  connected simply-connected Lie group with Lie algebra $\mathfrak g$, then $G$ admits a multiplicative Poisson structure $\Pi$  such that  $[\cdot,\cdot]^*=(\d_e\Pi)^t,$ where $\d_e\Pi:\mathfrak g\to \wedge^2\mathfrak g$ is the linear map defined by $d_e\Pi(\xi)=({\mathcal L}_{\bar{\xi}}\Pi)(e)$, $\bar{\xi}$ being a vector field on $G$ such that $\bar{\xi}(e)=\xi$ (see \cite{Va}, \cite{VG}). Conversely, an adjoint $1$-cocycle $\delta:\g\to\wedge^2\g$ whose dual map
	satisfies the Jacobi identity induces a unique multiplicative Poisson structure on a connected simply-connected Lie group $G$ integrated the Lie algebra $\g$. 
	
	\noindent Building on the previous discussion, investigating the global object whose infinitesimal counterpart is a (weak) NL bialgebra presents an intriguing problem. Ongoing research in this area aims to address these questions.

	\subsection{NL bialgebras and quantum groups} The quantization of Lie bialgebras into Hopf algebras is a key process in constructing quantum groups. Reshetikhin  established that every finite-dimensional Lie bialgebra $(\g,\g^*)$ over a field $K$ of characteristic zero admits a quantization \cite{Resh}, (see also \cite{ChPr, VG1}). This means there exists a corresponding $\hbar$-deformation $U_{\hbar}(\g)$ of its universal enveloping algebra $U(\g)$. The quantization process can be described using the Baker-Campbell-Hausdorff series $H(\xi_1,\xi_2)$ for the Lie algebra $\g$,
\[
H(\xi_1,\xi_2)=\xi_1+\xi_2+\frac{1}{2}[\xi_1,\xi_2]+\frac{1}{12}[\xi_1,[\xi_1,\xi_2]]-\frac{1}{12}[\xi_2,[\xi_1,\xi_2]]+\cdots
\]  
\noindent   Nijenhuis structures, particularly (weak) NL bialgebras, can provide valuable insights into the algebraic properties of quantum groups. The deformed Lie bialgebras that emerge in the hierarchy serve as infinitesimal structures that describe quantum groups. For instance, when $n$ is $\ad$-equivariant, the Baker-Campbell-Hausdorff series $H_n$ for the Lie algebra $(\g,[\cdot,\cdot]_n)$ simplifies to $H_n(\xi_1,\xi_2)=H(n\xi_1,\xi_2)$. This observation can be used to describe the $\hbar$-deformation of an N-deformation on $\g$, and its relationship with the $\hbar$-deformation of the Lie bialgebra itself.

	\medskip
	\noindent {\bf Acknowledgments.} 
	The author acknowledges the ``Cercetare  postdoctoral\u a avansat\u a" funded by the West University of Timi\c soara, Romania, the financial support from the Spanish Ministry of Science and Innovation under grants PID2022-137909NB-C22, and the Max-Plank institute for Mathematics in Bonn, MPIM-Bonn-2023, where a part of this work has been done. The author is grateful to Juan Carlos Marrero and Edith Padr\'on for their valuable discussions. In addition, the author wishes to express sincere gratitude to the referee for constructive feedback and recommendations, which greatly contributed to the improvement of this paper.
	
\end{document}